\newtheorem{theorem}{Theorem}[section]
\newtheorem{defi}{Definition}[section]
\newtheorem{proposition}{Proposition}[section]
\newtheorem{lemma}{Lemma}[section]
\newtheorem{conj}{Conjecture}[section]
\def \rr {\mathbb{R}}
\def \rn {\mathbb{R}^n}
\def \nn {\mathbb{N}}
\def \rn {\mathbb{R}^n}
\def \eps {\epsilon}
\def \crit {2^\star}
\def \Pl {{\mathcal P}_\mu}
\def \Mmx {\Omega\setminus\{0\}}
\def \ul {u_\lambda}
\def \Ul {U_\lambda}
\def \vl {v_\lambda}
\def \xl {x_\lambda}
\def \nl {\nu_\lambda}
\def \yl {y_\lambda}
\def \ml {\mu_\lambda}
\def \l {\lambda}
\def \tpl {\tilde{\psi}_\mu}
\def \mU {{\mathcal U}}
\def \mV {{\mathcal V}}
\def \hundeux {H_{k,0}^2(\Omega)}
\def \bi {\begin{itemize}}
\def \ei {\end{itemize}}
\def \be {\begin{enumerate}}
\def \ee {\end{enumerate}}
\def \beq {\begin{equation}}
\def \eeq {\end{equation}}
\def \bt {\begin{theorem}}
\def \et {\end{theorem}}
\def \bp {\begin{proposition}}
\def \ep {\end{proposition}}
\def \bl {\begin{lemma}}
\def \el {\end{lemma}}
\def \bpr {\begin{proof}}
\def \epr {\end{proof}}
\def \beqn {\begin{eqnarray}}
\def \eeqn {\end{eqnarray}}
\def \dkdeux {D_k^2(\rn)}
\def \O {\Omega}
 \def \Obar {\overline{\Omega}}
 \def \Bbar {\overline{B}}
\title[Pucci-Serrin conjecture]{Critical dimensions for polyharmonic operators: The Pucci-Serrin conjecture for solutions of bounded energy}
\author{Fr\'ed\'eric Robert}
\address{Fr\'ed\'eric Robert, Institut \'Elie Cartan, Universit\'e de Lorraine, CNRS, IECL, F-54000 Nancy, France}
\email{frederic.robert@univ-lorraine.fr}
\date{Februray 21st, 2025}
\subjclass[2020]{Primary 35J35, Secondary 35J60, 35B44, 35J08}
\begin{document}
\begin{abstract} We prove a Pucci-Serrin conjecture on critical dimensions under a uniform bound on the energy. The method is based on the analysis of the Green's function of polyharmonic operators with "almost" Hardy potential.
\end{abstract}
\maketitle
\section{Introduction}
Let $B$ be the unit ball of $\rn$ and let $k\in\nn$ be such that $n>2k\geq 2$. Consider $\lambda\in\rr$ and $u\in C^{2k}(\overline{B})$ such that
\begin{equation}\label{eq:1}\left\{\begin{array}{cc}
\Delta^k u-\lambda u=|u|^{\crit-2}u&\hbox{ in }B\\
u=\partial_\nu u=...=\partial_{\nu}^{k-1}u=0&\hbox{ on }\partial B
\end{array}\right\}
\end{equation}
where $\crit:=\frac{2n}{n-2k}$.  A very interesting conjecture of Pucci and Serrin (\cite{pucci-serrin}, p58) is stated as follows:
\begin{conj} Let $B$ be the unit ball of $\rn$ and let $k\in\nn$ be such that $n>2k\geq 2$. Assume that 
$$2k<n<4k.$$
Then there exists $\lambda_0(n,k)>0$ such that for all $0<\lambda<\lambda_0(n,k)$, any radial solution to \eqref{eq:1} is identically null.
\end{conj}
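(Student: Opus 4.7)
The plan is to argue by contradiction: assume there is a sequence $\lambda_m\searrow 0^+$ and nontrivial radial $u_m\in C^{2k}(\Bbar)$ solving \eqref{eq:1} at $\lambda=\lambda_m$. The first step is to force blow-up. If $\limsup\|u_m\|_\infty<\infty$, elliptic compactness produces a radial $u_\infty\in C^{2k}(\Bbar)$ solving $\Delta^k u_\infty=|u_\infty|^{\crit-2}u_\infty$ with Dirichlet data, which by the polyharmonic Pucci-Serrin identity on the star-shaped ball must vanish; the renormalization $u_m/\|u_m\|_\infty$ plus positivity of the first Dirichlet eigenvalue of $\Delta^k$ excludes $\|u_m\|_\infty\to 0$, so $\|u_m\|_\infty\to +\infty$. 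Setting $\mu_m:=\|u_m\|_\infty^{-2/(n-2k)}$ and $\tilde u_m(y):=\mu_m^{(n-2k)/2}u_m(\mu_m y)$ on $B_{1/\mu_m}$, the rescaling solves $\Delta^k\tilde u_m-\lambda_m\mu_m^{2k}\tilde u_m=|\tilde u_m|^{\crit-2}\tilde u_m$ with $\|\tilde u_m\|_\infty\le 1$, and (after a normalization placing the maximum at the origin, available by radiality and Boggio-type positivity) converges in $C^{2k}_{\rm loc}(\rn)$ to a radial entire solution $U$ of $\Delta^k U=|U|^{\crit-2}U$ with $U(y)\sim c_{n,k}|y|^{2k-n}$ at infinity.

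\medskip

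The core then combines two ingredients. First, multiplying \eqref{eq:1} by the Pucci-Serrin multiplier $x\cdot\nabla u_m+\frac{n-2k}{2}u_m$ and using the Dirichlet boundary conditions to annihilate all interior boundary terms of order $<k$ gives
\[
k\lambda_m\int_B u_m^2\,dx=C_{n,k}\oint_{\partial B}\bigl(\partial_\nu^k u_m\bigr)^2\,d\sigma
\]
with an explicit positive $C_{n,k}$. Second, I would represent $u_m(x)=\int_B G_{\lambda_m}(x,y)|u_m|^{\crit-2}u_m(y)\,dy$ where $G_{\lambda_m}$ is the Dirichlet Green's function of $\Delta^k-\lambda_m$ on $B$, and Neumann-expand $G_{\lambda_m}=G_0+\lambda_m\, G_0\ast G_0+\cdots$ as $\lambda_m\to 0$ (the ``almost Hardy potential'' analysis announced in the abstract, designed to control this kernel uniformly through blow-up). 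Since $|u_m|^{\crit-2}u_m$ concentrates as a Dirac mass of strength $\mu_m^{(n-2k)/2}\int_\rn|U|^{\crit-2}U\,dy$ at the origin, this produces the uniform pointwise expansion $u_m(x)=\mu_m^{(n-2k)/2}G_0(x,0)\int_\rn|U|^{\crit-2}U\,dy+o\bigl(\mu_m^{(n-2k)/2}\bigr)$ on compacts of $\Bbar\setminus\{0\}$; differentiating $k$ times along $\nu$ at $\partial B$ then yields $(\partial_\nu^k u_m)^2\sim C'_{n,k}\mu_m^{n-2k}$, so the right-hand side of the identity is of order $\mu_m^{n-2k}$.

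\medskip

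For the left-hand side, I would split $\int_B u_m^2$ as $|x|\le R\mu_m$ (contributing $O(\mu_m^{2k})$ via the bubble convergence of $\tilde u_m$) and $|x|\ge R\mu_m$ (where $u_m(x)\approx c\mu_m^{(n-2k)/2}|x|^{2k-n}$ from the asymptotics of $U$ and the Green's function). The critical dimension condition $n<4k$ is decisive: $\int_{R\mu_m}^{1}r^{4k-n-1}\,dr$ has exponent $>-1$, hence stays bounded as $\mu_m\to 0$, and the outer contribution is $\sim\mu_m^{n-2k}$, which dominates the inner $O(\mu_m^{2k})$ precisely because $n-2k<2k$. Thus $\int_B u_m^2\sim C''_{n,k}\mu_m^{n-2k}$, and inserting this together with the boundary estimate into the Pohozaev identity forces $\lambda_m\to C''_{n,k}/(kC'_{n,k})>0$, contradicting $\lambda_m\to 0$; the sought $\lambda_0(n,k)$ arises from the explicit constants.

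\medskip

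The principal obstacle I expect is the absence of any a priori energy bound, which rules out a direct Struwe-type decomposition. In the radial setting all concentration is forced to the origin, but the remaining pathology is a tower $u_m\approx\sum_{j=1}^{N_m}U_{\mu_{m,j},0}+\text{remainder}$ with $\mu_{m,1}\gg\cdots\gg\mu_{m,N_m}$ (possibly $N_m\to\infty$), and, for sign-changing radial solutions, a non-standard decay profile $U$ at infinity. My strategy would be to perform the matched scaling against the outermost scale $\mu_{m,1}$: because $n-2k<2k$, both $\int_B u_m^2$ and $\int_{\partial B}(\partial_\nu^k u_m)^2$ are dominated by the largest bubble's contribution, so the critical-dimension scaling survives a tower. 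Making this robust is exactly where the Green's-function expansion $G_{\lambda_m}=G_0+O(\lambda_m)$, together with quantitative pointwise control of $u_m$ outside all concentration regions, must be deployed --- the essence of the ``almost Hardy potential'' machinery, which in the regime $2k<n<4k$ operates at precisely the order needed to match the quadratic bulk term with the boundary term.
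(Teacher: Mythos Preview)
The statement is the full Pucci--Serrin conjecture, which the paper does \emph{not} prove: it is stated as an open problem, and the paper establishes only Theorem~\ref{th:1}, the same conclusion under the extra hypothesis $\Vert u\Vert_{\crit}\le M$. Your outline is in fact a faithful sketch of the paper's proof of that weaker theorem --- contradiction, blow-up, outermost-scale analysis, the expansion $u_m\sim \mu_m^{(n-2k)/2}G_0(\cdot,0)\int_{\rn}|U|^{\crit-2}U$ away from the origin, and the Pohozaev--Pucci--Serrin identity combined with the dimensional count $n<4k$ controlling $\int_B u_m^2$ --- with the cosmetic variant that you run the identity on all of $B$ rather than on $B_\delta(0)$.

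The genuine gap is exactly the one you flag in your last paragraph, and your proposed patch does not close it. Every step of the concentration analysis in the paper uses the $L^{\crit}$ bound: Lemma~\ref{lem:mass:0} (no mass away from $0$) feeds into Lemma~\ref{lem:weak:est} (the weak estimate $|x|^{(n-2k)/2}|u_m|\le C$), without which you cannot even locate blow-up at the origin --- your invocation of ``Boggio-type positivity'' to place the maximum at $0$ is a non sequitur for sign-changing solutions --- and the bubble extraction of Proposition~\ref{lem:nl} terminates only because the number of scales is bounded by $M^{\crit}/\int_{\rn}U^{\crit}$. Absent that termination, an outermost scale $\nu_m$ with the required smallness $\sup_{|x|>R\nu_m}|x|^{(n-2k)/2}|u_m(x)|\to 0$ need not exist, and your claim that both $\int_B u_m^2$ and $\oint_{\partial B}(\partial_\nu^k u_m)^2$ are ``dominated by the largest bubble'' is unsupported: for sign-changing towers the boundary contributions $\pm\mu_{m,j}^{(n-2k)/2}$ can cancel, and an unbounded sum of inner contributions can compete with the outer one. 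In short, your plan reproves Theorem~\ref{th:1}; the full conjecture remains beyond it for precisely the reason the paper introduces the bound $M$.
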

Edmunds-Fortunato-Janelli \cite{EFJ} and Grunau \cite{grunau:1995} proved that there exists a positive radial solution to \eqref{eq:1} for all $\l\in (0,\l_1)$ when $n>4k$, where $\l_1>0$ is the first eigenvalue of $\Delta^k$ on $B$ with Dirichlet boundary condition. In particular, the expected range $(2k,4k)$ is optimal.  In this paper, we prove the following:
\begin{theorem}\label{th:1} Let $B$ be the unit ball of $\rn$ and let $k\in\nn$ be such that $n>2k\geq 2$. Assume that 
$$2k<n<4k.$$
Then, for any $M>0$, there exists $\lambda_0(n,k,M)>0$ such that for all $0<\lambda<\lambda_0(n,k,M)$, any radial solution to \eqref{eq:1} satisfying that $\Vert u\Vert_{\crit}\leq M$ is identically null.
\end{theorem}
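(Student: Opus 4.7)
I would argue by contradiction via a blow-up analysis. Suppose there exist sequences $\lambda_j \downarrow 0$ and non-trivial radial solutions $u_j$ of \eqref{eq:1} satisfying $\Vert u_j\Vert_{\crit} \le M$. First I rule out compactness of the sequence: if $\Vert u_j\Vert_\infty$ stays bounded, elliptic regularity yields subsequential convergence to some $u_\infty$ solving $\Delta^k u = |u|^{\crit-2} u$ on $B$ with Dirichlet data, which must vanish by the classical Pucci-Serrin non-existence theorem on the star-shaped ball. A standard linearization argument around $u\equiv 0$ for $\lambda$ close to $0$ then forces $u_j \equiv 0$ eventually, contradicting non-triviality. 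Hence after extraction $\Vert u_j\Vert_\infty \to \infty$, and since the $u_j$ are radial, concentration occurs at the origin.

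\medskip

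Next, I set $\mu_j := |u_j(0)|^{-2/(n-2k)} \to 0$ and rescale $v_j(y) := \mu_j^{(n-2k)/2} u_j(\mu_j y)$, which satisfies $\Delta^k v_j - \lambda_j \mu_j^{2k} v_j = |v_j|^{\crit - 2} v_j$ on $B_{1/\mu_j}$. The uniform $L^{\crit}$-bound together with the radial symmetry forces $v_j \to \pm U$ in $C^{2k}_{\mathrm{loc}}(\rn)$, where $U$ is the radial extremal bubble for the polyharmonic Sobolev inequality. The central technical step -- and the main obstacle of the proof -- is to upgrade this local convergence to a sharp global pointwise expansion
\[ u_j(x) = \mu_j^{(n-2k)/2}\bigl(c_{n,k}\, G_{\lambda_j}(x,0) + o(1)\bigr), \qquad \mu_j \ll |x| \le 1,\]
where $G_\lambda$ denotes the Dirichlet Green's function of $\Delta^k - \lambda$ on $B$ (and $c_{n,k}$ is a constant computed from $U$). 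This is the Green's function analysis with "almost Hardy" potential advertised in the abstract: on the rescaled problem the perturbation $\lambda_j \mu_j^{2k}$ becomes negligible only on the bubble scale, while in the intermediate transition region it must be absorbed as a subcritical Hardy-type potential, controlled by an iteration on the Green's function representation of $u_j$ together with decay estimates for $G_\lambda$ uniform in small $\lambda$.

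\medskip

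Once the pointwise expansion is established, I would apply the Pucci-Serrin (polyharmonic Pohozaev) identity to $u_j$, which reads schematically
\[ (n-2k)\lambda_j \int_B u_j^2 \, dx \;=\; \mathrm{BT}(u_j), \]
with $\mathrm{BT}(u_j)$ a boundary integral of derivatives of $u_j$ of order at most $k$. The hypothesis $n<4k$ becomes decisive here: the profile $U$ fails to belong to $L^2(\rn)$, so both the interior $L^2$-mass and the boundary integral are dominated by the Green's function tail at scale $|x| \simeq 1$. Substituting the expansion yields
\[ \int_B u_j^2 \, dx = \Lambda_{n,k}\,\mu_j^{n-2k}\bigl(1+o(1)\bigr), \qquad \mathrm{BT}(u_j) = \Lambda'_{n,k}\,\mu_j^{n-2k}\bigl(1+o(1)\bigr),\]
where $\Lambda_{n,k}, \Lambda'_{n,k}$ are positive universal constants expressible through $G_0(\cdot, 0)$ on $B$. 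Dividing the identity by $\mu_j^{n-2k}$ and sending $j\to\infty$ collapses the left-hand side to $0$ while the right-hand side tends to $\Lambda'_{n,k} > 0$, a contradiction. The whole argument pivots on the Green's function expansion of the second step; I expect the Pohozaev balance itself to be a short computation once those asymptotics are in hand.
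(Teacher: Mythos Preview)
Your overall architecture matches the paper's: contradiction, blow-up, concentration at the origin via radial symmetry, a Green's function expansion in the exterior region (this is indeed the technical core, carried out in the paper via sharp pointwise control of Green's functions for $\Delta^k$ perturbed by a Hardy-type potential), and then the polyharmonic Pohozaev identity to close.

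The genuine gap is your choice of blow-up scale. Setting $\mu_j := |u_j(0)|^{-2/(n-2k)}$ selects the \emph{smallest} concentration scale, and your expansion $u_j(x) \simeq \mu_j^{(n-2k)/2}\, c_{n,k}\, G_{\lambda_j}(x,0)$ for all $\mu_j \ll |x| \le 1$ tacitly assumes a single bubble. The $L^{\crit}$-bound, however, only forces the number of bubbles to be finite: a radial tower at scales $\mu_{j,1} \ll \cdots \ll \mu_{j,N}$ with $N\le C(M)$ is a priori possible, and if $N\ge 2$ your expansion fails around $|x|\sim\mu_{j,2}$. The paper resolves this (its Proposition~\ref{lem:nl}) by iteratively extracting all bubble scales, taking $\nu_\lambda$ to be the \emph{largest}, and proving $\sup_{|x|>R\nu_\lambda}|x|^{(n-2k)/2}|u_\lambda(x)|\to 0$ as $R\to\infty$; only then are the Green's expansion and the sharp bound $|u_\lambda(x)|\le C\,\nu_\lambda^{(n-2k)/2}|x|^{-(n-2k)}$ established on $\{|x|>\nu_\lambda\}$, and it is $\nu_\lambda^{n-2k}$ that governs both sides of the Pohozaev balance. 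With the correct scale in hand your outline goes through; note too that only $\int_B u_j^2 = O(\nu_j^{n-2k})$ is needed (not a sharp asymptotic), since the factor $\lambda_j$ already kills that side. Two smaller remarks: the paper applies the Pohozaev identity on $B_\delta(0)$ with $\delta<1$ because its $C^{2k}$-convergence to $c\,G_0(0,\cdot)$ is only established on compact subsets of $B$, whereas you would need it up to $\partial B$; and the limiting equation $\Delta^k u=|u|^{\crit-2}u$ on $B$ is ruled out for radial (possibly sign-changing) solutions by Lazzo--Schmidt, not by the classical Pucci--Serrin star-shaped argument, which handles only constant-sign solutions.
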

Concerning terminology, Pucci-Serrin defined that a dimension $n>2k$ is {\it critical} if there exists $\lambda_0(n,k)>0$ such that any radial solution of \eqref{eq:1} is identically null when $0<\lambda<\lambda_0(n,k)$. Theorem \ref{th:1} proves the conjecture under any arbitrary fixed bound on the Lebesgue's norm.

\medskip\noindent Here is a brief history of the problem. The conjecture turns to be true in the following situations:
\begin{itemize}
\item $k=1$ (Brézis-Nirenberg \cite{brezis-nirenberg});
\item $k=2$ (Pucci-Serrin \cite{pucci-serrin});
\item $k\geq 2$ et  $n=2k+1$ (Pucci-Serrin \cite{pucci-serrin});
\item $k\geq 2$ et $2k<n<2k+6$ (Bernis-Grunau \cite{bernis-grunau} and Grunau \cite{grunau:II}).
\end{itemize}
In these situations, the proofs are based on Pohozaev-type identities for radial functions. The larger $k$ is, the trickier and longer the computations are and achieving $n<2k+6$ is a true "tour de force". Moreover, beside the computational difficulties, the methods in these papers do not seem enough to tackle the full conjecture (see Grunau \cite{grunau:II} for discussions on this issue).

\smallskip\noindent The case of positive functions is interesting in itself. Grunau \cite{grunau:positive} proved the validity of the conjecture when restricted to positive functions ({\it weakly critical dimensions}). In this situation, the key is to test a solution $u$ to \eqref{eq:1} against a carefully chosen positive polyharmonic function on $\Bbar$. The case of arbitrary sign-changing solutions involved in the original conjecture, the one we address here, is much more involved.

\smallskip\noindent As a final remark, we mention that Jannelli \cite{jannelli} has formalized the notion of {\it critical dimensions} in a more general setting by connecting it to the $L^2-$integrability of the Green's function.

\medskip\noindent In the present paper, we adopt a new approach that is based on the concentration analysis of families of solutions to \eqref{eq:1}: this permits to develop a method that is uniform and independent of the value of the power $k$. This approach is particularly relevant due to the critical exponent $\crit$ that may tolerate an unbounded family of solutions as $\l\to 0$: in this situation, this family should concentrate along explicit profiles referred to as bubbles. The general theory for second-order problems ($k=1$) has been performed in Druet-Hebey-Robert \cite{DHR} for positive solutions and was based on the comparison principle, see also Hebey \cite{hebey:zurich} for a modern point of view on such issues. We refer also to Druet-Laurain \cite{dl} regarding a method for positive solutions and to Premoselli \cite{premoselli} for a more recent and promising approach for sign-changing solutions. We also refer to Carletti \cite{carletti} for a beautiful asymptotic analysis when $k>1$.   

\smallskip\noindent Due to the sign-change and to the lack of comparison principle when $k\geq 2$, we develop tools based on Green's representation formula for a linear equations. More precisely, we rewrite \eqref{eq:1} as $Pu=0\, +\,\{\hbox{bdy conditions}\}$ where $P=\Delta^k-\l-|u|^{\crit-2}$ and we express $u$ in terms of the Green's function of $P$. The core and the bulk of our analysis is to get a sharp pointwise control of this Green's function, which is the object of Theorem \ref{APP:GREEN:th:Green:pointwise:BIS}. This control is based on the regularity Lemma \ref{APP:GREEN:lem:main} for solutions to linear equations with "almost" Hardy-type potential.

\smallskip\noindent The hypothesis on radial symmetry is essential. In addition to prescribing concentration at the center of the ball, radiality forces solutions of $\Delta^k u=|u|^{\crit-2}u$ on $\rn$ to have a fixed sign, see Theorem \ref{APP:RAD:th:1} below. This does not happen in the non-radial case as shown for instance by Molica Bisci and Pucci \cite{MBP}.

\smallskip\noindent Most of the analysis is valid for any elliptic operator like $\Delta^k+...$: the restriction $n<4k$ and the specificity of $\Delta^k-\lambda$ are used only for the final argument involving the Pohozaev-Pucci-Serrin identity. We will make an intensive use of the elliptic regularity of the reference Agmon-Douglis-Nirenberg \cite{ADN}. For the convenience of the reader, the last section \ref{APP:GREEN:sec:regul:adn} is a collection of results contained in \cite{ADN}.

\smallskip\noindent {\it Notations:} $C(a,b,...)$ will denote any constant depending only on $a,b,...$. The same notation might refer different constants from line to line, and even in the same line.

\smallskip\noindent{\it Acknowledgement:} The author thanks Emmanuel Hebey for remarks and comments on this work.

\section{Preliminary analysis}
We prove Theorem \ref{th:1} by contradiction. We fix $M>0$. If Theorem \ref{th:1} is not true, then there exists a sequence $(\lambda_i)_{i\in\nn}\in\rr_{>0}$ and $(u_i)_{i\in\nn}\in C^{2k}(\Bbar)$ radially symmetrical such that

\begin{equation}\label{eq:2}\left\{\begin{array}{cc}
\Delta^k u_i-\lambda_i u=|u_i|^{\crit-2}u_i&\hbox{ in }B\\
u_i=\partial_\nu u_i=...=\partial_{\nu}^{k-1}u_i=0&\hbox{ on }\partial B\\
u_i\not\equiv 0&\\
\Vert u_i\Vert_{\crit}\leq M&\\
\lim_{i\to\infty}\lambda_i=0&
\end{array}\right\}
\end{equation}
In order to simplify the exposition, we assume that there exists $\lambda_0>0$ such that for all $0<\lambda<\lambda_0$, there exists $\ul\in C^{2k}(\Bbar)$ radially symmetrical such that
\begin{equation}\label{eq:3}\left\{\begin{array}{cc}
\Delta^k \ul-\lambda \ul=|\ul|^{\crit-2}\ul&\hbox{ in }B\\
\ul=\partial_\nu \ul=...=\partial_{\nu}^{k-1}\ul=0&\hbox{ on }\partial B\\
\ul\not\equiv 0&\\
\Vert \ul\Vert_{\crit}\leq M&
\end{array}\right\}
\end{equation}
We are performing an analysis of $\ul$ as $\lambda\to 0$. All the results and statements will be up to the extraction of subfamilies, although we will always refer to $\ul$. A preliminary  remark is that $\ul\in C^{2k+1,\theta}(B)$, $0<\theta<1$, due to elliptic regularity.

\subsection{Sobolev spaces and inequalities}
For any $\Omega\subset \rn$ a smooth domain, $p\geq 1$ and $l\in\nn$, we define $H_l^p(\Omega)$ (resp. $H_{l,0}^p(\Omega)$) as the completion of  $\{u\in C^\infty(\Omega)\hbox{ s.t. }\Vert u\Vert_{H_l^p}<\infty\}$ (resp. $C^\infty_c(\Omega)$) for the norm $u\mapsto \Vert u\Vert_{H_l^p}:=\sum_{i\leq l}\Vert \nabla^i u\Vert_p$. Given a finite set $S\subset\Omega$, we define $L_{loc}^p(\Omega\setminus S)=\{u:\Omega\to \rr\hbox{ s.t. }\eta u\in L^p(\Omega)\hbox{ for all }\eta\in C^\infty_c(\rn\setminus S)\}$,  $H_{l,loc}^p(\Omega\setminus S)=\{u:\Omega\to \rr\hbox{ s.t. }\eta u\in H_{l}^p(\Omega)\hbox{ for all }\eta\in C^\infty_c(\rn\setminus S)\}$ and $H_{l,0,loc}^p(\Omega\setminus S)=\{u:\Omega\to \rr/\,\hbox{ s.t. }\eta u\in H_{l,0}^p(\Omega)\hbox{ for all }\eta\in C^\infty_c(\rn\setminus S)\}$. This notation is a bit abusive since $\Omega\setminus S$ is open, but there will be no ambiguity in this paper. In the specific case $p=2$ and $\Omega$ is bounded, note that on $\hundeux$,  $\Vert \cdot\Vert_{H^2_k}$ is equivalent to  $u\mapsto \Vert \Delta^{k/2}u\Vert_2$. Here and in the sequel, $\Delta^{\frac{i}{2}}=\nabla\Delta^{\frac{i-1}{2}}$ when $i$ is odd. Note that for $u\in C^{2k}(\Obar)$ and $\Omega$ a smooth bounded domain of $\rn$ or $\Omega$ is a half-space, then $\{u\in H_{k,0}^2(\Omega)\}\Leftrightarrow \{u=\partial_\nu u=...=\partial_{\nu}^{k-1}u=0\hbox{ on }\partial \O\}$.

\medskip\noindent We let $\dkdeux$ be the completion of $C^\infty_c(\rn)$ for the norm $u\mapsto \Vert \Delta^{k/2}u\Vert_2$. It follows from Sobolev's theorem that there exists $K(n,k)>0$ such that
\begin{equation}\label{sobo:ineq:rn}
\left(\int_{\rn}|u|^{\crit}\, dx\right)^\frac{2}{\crit}\leq K(n,k)\int_{\rn}(\Delta^{\frac{k}{2}}u)^2\, dx\hbox{ for all }u\in\dkdeux.
\end{equation}
As one checks, this inequality is a also valid for all $u\in H_{k,0}^2(\O)$.

\begin{lemma} Let $(\ul)_{\lambda>0}\in C^{2k}(\Bbar)$ be a family radially symmetrical solution to \eqref{eq:3}. Then $\lim_{\lambda\to 0}\Vert\ul\Vert_\infty=+\infty$.
\end{lemma}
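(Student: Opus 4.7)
\medn \textbf{Proof plan.} The natural strategy is to argue by contradiction. Suppose that along some subsequence (still denoted $\ul$) one has $\Vert\ul\Vert_\infty\leq C$ as $\l\to 0$. I would then pass to a strong limit $u_0$, identify $u_0$ as a solution of the critical limit problem, and appeal to the Pohozaev--Pucci--Serrin identity to force $u_0\equiv 0$, in contradiction with a uniform lower bound on $\Vert\ul\Vert_{\crit}$.

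\medn \textbf{Step 1 (uniform lower bound on $\Vert\ul\Vert_{\crit}$).} Multiplying \eqref{eq:3} by $\ul$ and integrating over $B$, the Dirichlet boundary conditions yield, after $k$ integrations by parts,
\[
\int_B(\Delta^{k/2}\ul)^2\,dx-\l\int_B\ul^2\,dx=\int_B|\ul|^{\crit}\,dx.
\]
Combining this with the Sobolev inequality \eqref{sobo:ineq:rn} (valid on $H_{k,0}^2(B)$) and H\"older's inequality, I would get
\[
\Vert\ul\Vert_{\crit}^2\leq K(n,k)\Vert\ul\Vert_{\crit}^{\crit}+K(n,k)\l\,|B|^{1-2/\crit}\Vert\ul\Vert_{\crit}^2.
\]
For $\l$ small enough, this gives $\Vert\ul\Vert_{\crit}^{\crit-2}\geq c_0>0$ uniformly as $\l\to 0$. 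Note that this step is \emph{unconditional}; it does not use the contradiction hypothesis.

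\medn \textbf{Step 2 (uniform regularity and extraction of a limit).} Under the contradiction hypothesis $\Vert\ul\Vert_\infty\leq C$, the right-hand side $\l\ul+|\ul|^{\crit-2}\ul$ of \eqref{eq:3} is uniformly bounded in $L^\infty(B)$. Applying the ADN elliptic theory (as collected in the final section of the paper) to the polyharmonic Dirichlet problem, I would obtain uniform bounds for $\ul$ in $W^{2k,p}(B)$ for every $p<\infty$, and a standard bootstrap then yields a uniform $C^{2k,\theta}(\Bbar)$ bound for some $\theta\in(0,1)$. Arzela--Ascoli provides a subsequence $\ul\to u_0$ in $C^{2k}(\Bbar)$. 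Passing to the limit, $u_0\in C^{2k}(\Bbar)$ is a radial classical solution of
\[
\Delta^k u_0=|u_0|^{\crit-2}u_0\hbox{ in }B,\quad u_0=\partial_\nu u_0=\cdots=\partial_\nu^{k-1}u_0=0\hbox{ on }\partial B.
\]

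\medn \textbf{Step 3 (Pohozaev gives the contradiction).} The ball $B$ is strictly star-shaped with respect to the origin, and $\crit$ is the critical Sobolev exponent. The Pohozaev--Pucci--Serrin identity for $\Delta^k$ with Dirichlet boundary conditions (\cite{pucci-serrin}) then forces $u_0\equiv 0$, hence $\Vert u_0\Vert_{\crit}=0$. But the strong convergence of Step 2 combined with the uniform lower bound of Step 1 gives $\Vert u_0\Vert_{\crit}\geq c_0^{1/(\crit-2)}>0$, a contradiction.

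\medn \textbf{Expected obstacle.} None of the above is deep; the only item requiring some care is the passage from a mere $L^\infty$ bound on the right-hand side to a uniform $C^{2k,\theta}(\Bbar)$ bound on $\ul$. This is a standard ADN-bootstrap, but it must be carried out with all $k$ Dirichlet boundary conditions active. This is precisely the kind of input collected later in the paper, so the proof should be short.
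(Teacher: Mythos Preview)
Your proof is correct and follows essentially the same route as the paper: contradiction, uniform $C^{2k,\theta}$ bounds via ADN regularity, Ascoli, and a nonexistence result for the limiting critical problem combined with the Sobolev lower bound on $\Vert\ul\Vert_{\crit}$. The only cosmetic difference is that the paper invokes Lazzo--Schmidt \cite{ls} for the nonexistence of nontrivial radial solutions to $\Delta^k u_0=|u_0|^{\crit-2}u_0$ on $B$, whereas you appeal directly to the Pohozaev--Pucci--Serrin identity; both are valid and the argument is otherwise identical (the paper also places the lower-bound computation after the convergence step rather than before, which is immaterial).
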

\begin{proof} We argue by contradiction. If the conclusion does not hold, then there exists $C>0$ such that $\Vert\ul\Vert_\infty\leq C$ for all $\lambda>0$. It follows from elliptic theory (Theorems \ref{APP:GREEN:th:2:again} and \ref{APP:GREEN:th:2:holder}) that $\Vert\ul\Vert_{C^{2k,1/2}}\leq C$ for all $\lambda>0$. It then follows from Ascoli's theorem that there exists $u_0\in C^{2k}(\Bbar)$ such that $\lim_{\lambda\to 0}\ul=u_0$ in $C^{2k}(\Bbar)$. Passing to the limit in \eqref{eq:3} yields
\begin{equation}\label{eq:57}\left\{\begin{array}{cc}
\Delta^k u_0=|u_0|^{\crit-2}u_0&\hbox{ in }B\\
u_0=\partial_\nu u_0=...=\partial_{\nu}^{k-1}u_0=0&\hbox{ on }\partial B
\end{array}\right\}
\end{equation}
It then follows from Lazzo-Schmidt (point (a) of Corollary 3.10 of \cite{ls}) that $u_0\equiv 0$.

\smallskip\noindent Multiplying \eqref{eq:3} by $\ul$, integrating by parts and using H\"older's inequality yield
\begin{eqnarray*}
\int_B(\Delta^{k/2}\ul)^2\, dx=\int_B\ul\Delta^k\ul\, dx=\lambda\int_B\ul^2\, dx+\int_B|\ul|^{\crit}\, dx\leq C\lambda\Vert \ul\Vert_{\crit}^2+\Vert\ul\Vert_{\crit}^{\crit}.  
\end{eqnarray*}
With the Sobolev inequality \eqref{sobo:ineq:rn} and using that $\ul\not\equiv 0$ and $\ul\in H_{k,0}^2(B)$, we get that $K(n,k)^{-1}\leq C\lambda+\Vert\ul\Vert_{\crit}^{\crit-2}$. Passing to the limit $\lambda\to 0$ and using that $u_0\equiv 0$, we get a contradiction. This proves the Lemma.\end{proof}

\smallskip\noindent Note that as a consequence of the preceding argument, $(\ul)_\l$ is bounded in $H_{k,0}^2(B)$, that is there exists $C(M)>0$ such that $\Vert \ul\Vert_{H_k^2}\leq C(M)$ for all $\l>0$.

\begin{lemma}\label{lem:mass:0} Let $(\yl)_\lambda\in B$ and $(r_\l)_{\l>0}\in\rr_{>0}$ be such that $\lim_{\l\to 0}r_\l^{-1}|\yl|=+\infty$. Then
$$\lim_{\l\to 0}\int_{B_{r_\l}(\yl)\cap B}|\ul|^{\crit}\, dx=0.$$
\end{lemma}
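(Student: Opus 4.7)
\medskip\noindent\textbf{Proof plan for Lemma \ref{lem:mass:0}.} Set $\rho_\l:=|\yl|$. The hypothesis $r_\l^{-1}|\yl|\to+\infty$ is exactly $r_\l/\rho_\l\to 0$, and since $\yl\in B$ forces $\rho_\l\le 1$, it also follows that $r_\l\to 0$. For $\l$ small enough I may therefore assume $r_\l<\rho_\l$, in particular $B_{r_\l}(\yl)$ does not meet the origin. The plan is to combine three ingredients that are entirely independent of the PDE: the radial symmetry of $\ul$, the uniform bound $\Vert\ul\Vert_{\crit}\le M$, and a purely geometric control on how much of each sphere centered at $0$ can possibly sit inside $B_{r_\l}(\yl)$.

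The key geometric observation is the following tangent-cone inclusion: $B_{r_\l}(\yl)$ is contained in the solid cone $\mathcal{C}_\l$ with apex at the origin, axis $\R_+\yl$, and half-angle $\alpha_\l:=\arcsin(r_\l/\rho_\l)$. Indeed, if a ray from $0$ makes angle $\theta$ with the axis and meets the ball, then the perpendicular distance $\rho_\l\sin\theta$ from $\yl$ to that ray is at most $r_\l$. The standard formula for a spherical cap of small angular radius then yields, for every $r>0$,
$$\mathcal{H}^{n-1}\!\bigl(S_r\cap B_{r_\l}(\yl)\bigr)\ \le\ \mathcal{H}^{n-1}\!\bigl(S_r\cap \mathcal{C}_\l\bigr)\ =\ r^{n-1}\,|\mathbb{S}^{n-2}|\int_0^{\alpha_\l}\sin^{n-2}\!\phi\,d\phi\ \le\ C(n)\left(\frac{r_\l}{\rho_\l}\right)^{n-1}\! r^{n-1},$$
where $S_r:=\{|x|=r\}$ and the last inequality uses $\sin\phi\le\phi$ together with $\alpha_\l\le \pi r_\l/(2\rho_\l)$.

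Writing $\ul(x)=f_\l(|x|)$ and integrating in polar coordinates,
$$\int_{B_{r_\l}(\yl)\cap B}|\ul|^{\crit}\,dx\ \le\ \int_0^{\rho_\l+r_\l}|f_\l(r)|^{\crit}\,\mathcal{H}^{n-1}\!\bigl(S_r\cap B_{r_\l}(\yl)\bigr)\,dr\ \le\ C(n)\!\left(\frac{r_\l}{\rho_\l}\right)^{\!n-1}\!\!\int_0^\infty |f_\l(r)|^{\crit} r^{n-1}\,dr,$$
and since the last integral equals $|\mathbb{S}^{n-1}|^{-1}\Vert\ul\Vert_{\crit}^{\crit}\le |\mathbb{S}^{n-1}|^{-1}M^{\crit}$, the right-hand side tends to $0$ as $\l\to 0$.

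There is essentially no obstacle: once the tangent-cone inclusion is noticed, the argument is an elementary geometric computation combined with radial symmetry and the hypothesis $\Vert\ul\Vert_{\crit}\le M$. No regularity result or any previous statement of the paper is actually needed.
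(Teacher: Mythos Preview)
Your argument is correct. Both proofs rest solely on radial symmetry and the uniform bound $\Vert\ul\Vert_{\crit}\le M$, but they exploit them differently. The paper's proof is a \emph{discrete} packing argument: for each $N\in\nn$ one picks $N$ rotations $\sigma_1,\dots,\sigma_N$ of $\rn$ so that the translated balls $B_{r_\l}(\sigma_j(\yl))$ are pairwise disjoint once $r_\l/|\yl|$ is small; radial invariance then gives $\int_{B_{r_\l}(\yl)\cap B}|\ul|^{\crit}\,dx\le M^{\crit}/N$, and one lets $N\to\infty$. Your argument is the \emph{continuous} counterpart: the tangent--cone inclusion $B_{r_\l}(\yl)\subset\mathcal{C}_\l$ shows that on every sphere $S_r$ the ball occupies at most a $C(n)(r_\l/|\yl|)^{n-1}$ fraction of the surface measure, and polar integration converts this directly into the same fraction of $\Vert\ul\Vert_{\crit}^{\crit}$. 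Your route is slightly more computational but yields the explicit decay rate $(r_\l/|\yl|)^{n-1}$, whereas the paper's argument is shorter and avoids the spherical--cap estimate altogether. A trivial cosmetic point: in the final display the upper limit $\infty$ should be $1$ (or $\rho_\l+r_\l$), since $f_\l$ is only defined on $[0,1)$; this does not affect the argument.
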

\begin{proof} Let us fix $N\in\nn$. There exists a group of isometries of $\rn$, say $G$, such that $\sharp G \geq N$ and there exists $\eps_N>0$ such that $d(\sigma(e_1),\tau(e_1))\geq \eps_N$ for all $\sigma,\tau\in G$, $\sigma\neq \tau$. Here, $e_1$ is the first vector of the canonical basis of $\rn$. Therefore, as one checks, $B_{r_\l}(\sigma(\yl))\cap B_{r_\l}(\tau(\yl))=\emptyset$ for all $\sigma,\tau\in G$, $\sigma\neq \tau$ and $\l>0$ is small enough. With  the invariance of $\ul$ under the action of the group $G$, we get that
\begin{eqnarray*}
M^{\crit}&\geq &\int_B |\ul|^{\crit}\, dx\geq \int_{\bigcup_{\sigma\in G}B_{r_\l}(\sigma(\yl))\cap B}|\ul|^{\crit}\, dx\\
& \geq &\sum_{\sigma\in G}\int_{B_{r_\l}(\sigma(\yl))\cap B}|\ul|^{\crit}\, dx=\sharp G\int_{B_{r_\l}(\yl)\cap B}|\ul|^{\crit}\, dx 
\end{eqnarray*}
and therefore 
$$\int_{B_{r_\l}(\yl)\cap B}|\ul|^{\crit}\, dx\leq \frac{M^{\crit}}{N}\hbox{ as }\lambda\to 0.$$
Since this is valid for all $N$, the conclusion follows.\end{proof}

\begin{lemma}\label{lem:weak:est} Let $(\ul)_{\lambda>0}\in C^{2k}(\Bbar)$ be a family radially symmetrical solution to \eqref{eq:3}. Then there exists $C>0$ such that $|x|^{\frac{n-2k}{2}}|\ul(x)|\leq C$ for all $x\in B$ and $\l\to 0$.
\end{lemma}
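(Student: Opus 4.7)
The plan is to argue by contradiction via a concentration rescaling. Suppose that along a subsequence
\[
S_\lambda := \sup_{x\in B}|x|^{(n-2k)/2}|\ul(x)| \longrightarrow +\infty.
\]
By radial symmetry the continuous function $r\mapsto r^{(n-2k)/2}|\ul(r)|$ on $[0,1]$ vanishes at both endpoints, so the supremum is attained at some interior point $\xl\in B$ with $r_\lambda := |\xl| \in (0,1)$. Introduce the natural concentration scale $\ml := |\ul(\xl)|^{-2/(n-2k)}$, so that $\ml^{(n-2k)/2}|\ul(\xl)|=1$; then $S_\lambda = (r_\lambda/\ml)^{(n-2k)/2}$, and the contradiction assumption reads $r_\lambda/\ml \to +\infty$ (in particular $\ml \to 0$).

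Rescale by setting $\vl(y) := \ml^{(n-2k)/2}\ul(\xl+\ml y)$ on $\Omega_\lambda := \ml^{-1}(B-\xl)$. Using the criticality of $\crit$ one checks
\[
\Delta^k \vl - \lambda \ml^{2k} \vl = |\vl|^{\crit-2}\vl \quad\text{in }\Omega_\lambda,\qquad |\vl(0)|=1,
\]
with $\lambda\ml^{2k}\to 0$. The crucial observation is that whenever $|y|\le r_\lambda/(2\ml)$ one has $|\xl+\ml y|\ge r_\lambda/2$, so the definition of $S_\lambda$ forces
\[
|\vl(y)|\le \ml^{(n-2k)/2}\,\frac{S_\lambda}{(r_\lambda/2)^{(n-2k)/2}} = 2^{(n-2k)/2}.
\]
Since $r_\lambda/(2\ml)\to\infty$, ADN elliptic regularity (Theorems~\ref{APP:GREEN:th:2:again} and \ref{APP:GREEN:th:2:holder}) promotes this $L^\infty$ bound to locally uniform $C^{2k,\theta}$ bounds. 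Up to extraction, Ascoli then yields $\vl \to v$ in $C^{2k}_{\mathrm{loc}}$ on the limit domain---which in the generic case $r_\lambda\to r_0\in [0,1)$ is all of $\rn$---with $v\in C^{2k}(\rn)$ solving $\Delta^k v = |v|^{\crit-2}v$ and $|v(0)|=1$.

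The contradiction is then extracted from Lemma~\ref{lem:mass:0}. By the change of variables $x=\xl+\ml y$,
\[
\int_{B_R(0)}|\vl|^{\crit}\,dy = \int_{B_{R\ml}(\xl)\cap B}|\ul|^{\crit}\,dx\quad\text{for every fixed }R>0,
\]
and since $|\xl|/(R\ml) = r_\lambda/(R\ml) \to \infty$, the lemma forces the right-hand side to $0$. Passing to the limit yields $\int_{B_R(0)}|v|^{\crit}\,dy = 0$ for all $R$, so $v\equiv 0$, contradicting $|v(0)|=1$.

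The main technical obstacle is boundary concentration $r_\lambda \to 1$: then $\Omega_\lambda$ converges either to a half-space (if $(1-r_\lambda)/\ml$ stays positive and bounded) or degenerates (if $(1-r_\lambda)/\ml\to 0$). The degenerate case is ruled out directly: boundary ADN regularity provides $C^{2k}$ bounds for $\vl$ up to $\partial\Omega_\lambda$, so the vanishing of $\vl,\partial_\nu\vl,\ldots,\partial_\nu^{k-1}\vl$ at the nearby boundary forces $|\vl(0)|=O\bigl(((1-r_\lambda)/\ml)^k\bigr)\to 0$, contradicting $|\vl(0)|=1$. In the half-space case the Dirichlet conditions pass to the limit and the very same mass-vanishing argument via Lemma~\ref{lem:mass:0} closes the proof.
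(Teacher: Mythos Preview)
Your argument is correct and is essentially the paper's own proof: argue by contradiction, rescale at a point realizing $\sup_{x\in B}|x|^{(n-2k)/2}|\ul(x)|$, use maximality to get local $L^\infty$ bounds on the rescaled function, extract a limit via ADN regularity and Ascoli, and reach a contradiction through the mass-vanishing Lemma~\ref{lem:mass:0}. The paper organizes the boundary discussion by splitting directly on whether $d(\xl,\partial B)/\ml\to\infty$ or stays bounded (rather than on $r_\lambda$), which automatically absorbs the sub-case $r_\lambda\to 1$ with $(1-r_\lambda)/\ml\to\infty$ that you should fold into your ``generic'' full-space case.
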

\begin{proof} We prove the lemma by contradiction. We set $w_\l(x):=|x|^{\frac{n-2k}{2}}|\ul(x)|$ for all $x\in B$ and $\l>0$. Let us assume that
$$w_\l(\yl):=\sup_{x\in B}w_\l(x)\to +\infty\hbox{ as }\l\to 0.$$
We define $r_\l:=|\ul(\yl)|^{-\frac{2}{n-2k}}$. We have that
\begin{equation}\label{eq:yl:rl}
\frac{|\yl|}{r_\l}=w_\l(\yl)^{\frac{2}{n-2k}}\to \infty\hbox{ and }r_\l\to 0\hbox{ as }\l\to 0.
\end{equation}
\smallskip\noindent{\it Case 1:} assume that 
\begin{equation}\label{lim:bndy:1}
\lim_{\l\to 0}\frac{d(\yl,\partial B)}{r_\l}=+\infty.
\end{equation}
We define $$\vl(x):=r_\l^{\frac{n-2k}{2}}\ul(\yl+r_\l x)\hbox{ for }x\in \frac{B-\yl}{r_\l}.$$
A change of variable in \eqref{eq:3} yields
\beq\label{eq:vl}
\Delta^k\vl-\l r_\l^{2k}\vl=|\vl|^{\crit-2}\vl\hbox{ in }\frac{B-\yl}{r_\l}.
\eeq
It follows from the definition of $\yl$ that
$$|\yl+r_\l x|^{\frac{n-2k}{2}}|\ul(\yl+r_\l x)|\leq |\yl|^{\frac{n-2k}{2}}|\ul(\yl)|\hbox{ for }x\in \frac{B-\yl}{r_\l},$$
and then
$$\left|\frac{\yl}{|\yl|}+\frac{r_\l}{|\yl|} x\right|^{\frac{n-2k}{2}}|\vl(x)|\leq 1\hbox{ for }x\in \frac{B-\yl}{r_\l}.$$
We fix $R>0$. It  follows from \eqref{lim:bndy:1} and the above inequality that there exists $\lambda_R>0$ such that
\begin{equation*}
B_R(0)\subset \frac{B-\yl}{r_\l}\hbox{ and }|\vl(x)|\leq 2\hbox{ for all }x\in B_R(0)\hbox{ and }0<\l<\l_R.
\end{equation*}
With \eqref{eq:vl}, it then follows from elliptic theory (Theorems \ref{APP:GREEN:th:2:again} and \ref{APP:GREEN:th:2:holder}) and Ascoli's theorem that there exists $v\in C^{2k}(\rn)$ such that $\lim_{\l\to 0}\vl=v$ in $C^{2k}_{loc}(\rn)$. Given $R>0$, with a change of variable, we get that
$$\int_{B_R(0)}|\vl|^{\crit}\, dx=\int_{B_{R r_\l}(\yl)}|\ul|^{\crit}\, dx.$$
It follows from Lemma \ref{lem:mass:0} and \eqref{eq:yl:rl} that passing to the limit yields $\int_{B_R(0)}|v|^{\crit}\, dx=0$ for all $R>0$, so that $v\equiv 0$ since it is continuous. However, since $|\vl(0)|=1$, we get that $|v(0)|=1$, which contradicts $v\equiv 0$. This ends Case 1.

\medskip\noindent {\it Case 2:}
$$\lim_{\l\to 0}\frac{d(\yl,\partial B)}{r_\l}=\rho\in [0,+\infty).$$
Up to a rotation, we then get that
$$\lim_{\l\to 0}\frac{B-\yl}{r_\l}=(-\infty, \rho)\times \rr^{n-1}.$$
The proof is then similar to Case 1 by working on this half-space. We leave the details to the reader. This yields also to a contradiction.

\smallskip\noindent In both cases, we have gotten a contradiction, which proves the Lemma.\end{proof}

\begin{lemma}\label{lem:cv:out:0} Let $(\ul)_{\lambda>0}\in C^{2k}(\Bbar)$ be a family radially symmetrical solution to \eqref{eq:3}. Then  $\lim_{\l\to 0}\ul=0$ in $C^{2k}_{loc}(\Bbar\setminus\{0\})$.
\end{lemma}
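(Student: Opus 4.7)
The plan is to combine the pointwise bound from Lemma \ref{lem:weak:est} with elliptic regularity away from $0$, extract a convergent subsequence in $C^{2k}_{loc}(\Bbar\setminus\{0\})$, and then identify the limit as the zero function via the Lazzo-Schmidt non-existence result already used in the first Lemma of this section.

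First, for any compact $K\subset \Bbar\setminus\{0\}$, Lemma \ref{lem:weak:est} yields a uniform bound $\Vert \ul\Vert_{L^\infty(K)}\leq C_K$, so the right-hand side of \eqref{eq:3}, namely $\l \ul+|\ul|^{\crit-2}\ul$, is uniformly bounded on $K$. Applying the ADN elliptic regularity (Theorems \ref{APP:GREEN:th:2:again} and \ref{APP:GREEN:th:2:holder}) with the Dirichlet conditions preserved up to $\partial B$ then provides uniform $C^{2k,1/2}$ bounds on a neighborhood of $K$. A diagonal extraction along an exhaustion of $\Bbar\setminus\{0\}$ by compacts, combined with Arzel\`a-Ascoli, produces (along a subsequence) a function $u_0\in C^{2k}(\Bbar\setminus\{0\})$ with $\ul\to u_0$ in $C^{2k}_{loc}(\Bbar\setminus\{0\})$. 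Passing to the limit in \eqref{eq:3}, $u_0$ solves $\Delta^k u_0=|u_0|^{\crit-2}u_0$ classically on $B\setminus\{0\}$ with the full Dirichlet data on $\partial B$. From the $H_{k,0}^2(B)$-bound on $(\ul)$ noted after the first Lemma, we may further extract so that $u_0\in H_{k,0}^2(B)$, while $|u_0(x)|\leq C|x|^{-(n-2k)/2}$ holds on $B\setminus\{0\}$ by Lemma \ref{lem:weak:est}.

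It remains to prove $u_0\equiv 0$: once established, uniqueness of the limit upgrades the subsequential convergence to convergence of the whole family. The strategy is to upgrade $u_0$ to a weak solution of the same equation on the whole ball and then quote Lazzo-Schmidt, Corollary 3.10(a) of \cite{ls}, exactly as in the first Lemma. The decay $|u_0|^{\crit-1}\leq C|x|^{-(n+2k)/2}$ lies in $L^1_{loc}(B)$ since $(n+2k)/2<n$ whenever $n>2k$, so the right-hand side defines a distribution on $B$. For $\varphi\in C^\infty_c(B)$, test the equation on $B\setminus\{0\}$ with $\eta_\eps\varphi$, where $\eta_\eps\in C^\infty_c(\rn\setminus\{0\})$ is a standard cutoff equal to $1$ outside $B_{2\eps}(0)$, equal to $0$ on $B_\eps(0)$, and satisfying $|\nabla^j\eta_\eps|\leq C\eps^{-j}$. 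A direct computation using the volume factor $\eps^n$ and $n>2k$ gives $\Vert \eta_\eps-1\Vert_{H_k^2(B_{2\eps})}\to 0$, so $\eta_\eps\varphi\to\varphi$ in $H_{k,0}^2(B)$. Letting $\eps\to 0$ on both sides converts the classical equation on $B\setminus\{0\}$ into the weak equation on $B$, after which Lazzo-Schmidt forces $u_0\equiv 0$.

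The only real technical point is this capacity-type extension of $u_0$ from $B\setminus\{0\}$ to $B$; it hinges on $n>2k$, which is equivalent to $\{0\}$ having zero $H_k^2$-capacity in $\rn$ and is part of the standing hypotheses. All other steps are standard consequences of the ADN theory and the already-established a priori bounds.
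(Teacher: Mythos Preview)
Your argument follows the paper's scheme: Lemma~\ref{lem:weak:est} gives local $L^\infty$ bounds away from $0$, ADN regularity yields $C^{2k}_{loc}(\Bbar\setminus\{0\})$ compactness, and the limit $u_0$ is identified as zero via Lazzo--Schmidt. The one difference is in showing that $u_0$ solves $\Delta^k u_0=|u_0|^{\crit-2}u_0$ weakly on all of $B$: you run a capacity/cutoff removal-of-singularity argument, whereas the paper simply passes to the limit in the weak formulation using the weak $H_{k,0}^2(B)$ convergence $\ul\rightharpoonup u_0$ that you have already extracted. Your detour is correct (and the estimate $\Vert\nabla^j(\eta_\eps-1)\Vert_2^2\leq C\eps^{n-2j}\to 0$ for $j\leq k$ is exactly the point $n>2k$), but unnecessary once weak $H_{k,0}^2$ convergence is in hand. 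Two small items you should state before invoking \cite{ls}: $u_0$ is radial (as a $C^{2k}_{loc}$ limit of radial functions), which Corollary~3.10(a) there requires; and the weak solution $u_0\in H_{k,0}^2(B)$ must be bootstrapped to $u_0\in C^{2k}(\Bbar)$ --- the paper does this via Van der Vorst \cite{vdv} and Theorems~\ref{APP:GREEN:th:2:again}--\ref{APP:GREEN:th:2:holder}.
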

\begin{proof} It follows from Lemma \ref{lem:weak:est} that for all $\delta>0$, there exists $C(\delta)>0$ such that $|\ul(x)|\leq C(\delta)$ for all $\l>0$ and $x\in B\setminus B_\delta(0)$. It follows from elliptic theory (Theorems \ref{APP:GREEN:th:2:again} and \ref{APP:GREEN:th:2:holder}) and Ascoli's theorem that there exists $u_0\in C^{2k}(\Bbar\setminus\{0\})$ such that $\lim_{\l\to 0}\ul=u_0$ in $C^{2k}_{loc}(\Bbar\setminus\{0\})$. Since $\Vert \ul\Vert_{H_k^2}\leq C(M)$ for all $\l>0$, we also get that $u_0\in H_{k,0}^2(B)$ and $\ul\rightharpoonup u_0$ weakly in $H_{k,0}^2(B)$. Passing to the limit $\l\to 0$ in \eqref{eq:3}, we get that $u_0$ is a weak solution to \eqref{eq:57}. Regularity theory (see Van der Vorst \cite{vdv} and Theorems \ref{APP:GREEN:th:2:again} and \ref{APP:GREEN:th:2:holder}) yields $u_0\in C^{2k}(\Bbar)$ is a strong solution to \eqref{eq:57}, and then $u_0\equiv 0$ by \cite{ls} since it is radial. This proves the Lemma.\end{proof}

\smallskip\noindent We will make use of the following classification:
\bt[Swanson \cite{swanson}] \label{APP:RAD:th:1} Let $k,n\in\mathbb{N}$ be such $2\leq 2k<n$. Let $u\in \dkdeux$ be a distributional solution to $\Delta^k u=|u|^{\crit-2}u$ in $\rn$. Assume that $u$ is radially symmetric. Then there exists $\mu>0$ and $\eps\in\{-1,0,+1\}$ such that
$$u(x)=\eps \left(\frac{\mu}{\mu^2+a_{n,k}|x|^2}\right)^{\frac{n-2k}{2}},\hbox{ where }a_{n,k}:=\left(\Pi_{j=-k}^{k-1}(n+2j)\right)^{-\frac{1}{k}}.$$ 
\et
\begin{proof} Although Swanson's Theorem 4 in \cite{swanson} is only stated for positive  functions, the proof is working for any functions. More precisely, if $u(0)\neq 0$, we follow exactly Swanson's proof. If $u(0)=0$, the arguments of Swanson (Lemma 7) yield $u\equiv 0$.\end{proof}

\begin{lemma}\label{lem:add:bump} Let $(\yl)_\l\in B$ be such that $\lim_{\l\to 0}|\yl|^{\frac{n-2k}{2}}|\ul(\yl)|=c\in (0,+\infty)$. Then there exists $(r_\l)_\l\in (0,+\infty)$ such $\lim_{\l\to 0}r_\l=0$, $\lim_{\l\to 0}r_\l^{-1}|\yl|=c'\in (0,+\infty)$ and
\beq
\lim_{\l\to 0}r_\l^{\frac{n-2k}{2}}\ul(r_\l \cdot)=\eps U\hbox{ in }C^{2k}_{loc}(\rn\setminus\{0\}),
\eeq
for some $\eps\in \{-1,+1\}$ where 
\beq\label{def:U}
U(x)=\left(\frac{1}{1+a_{n,k}|x|^2}\right)^{\frac{n-2k}{2}}\hbox{ for all }x\in\rn.
\eeq
\end{lemma}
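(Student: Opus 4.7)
The plan is to extract a bubble by rescaling at a tentative scale comparable to $|\yl|$, apply Swanson's classification (Theorem \ref{APP:RAD:th:1}), and then readjust the scale to normalize the resulting parameter $\mu$ to $1$.

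First, Lemma \ref{lem:cv:out:0} forces $\yl \to 0$: otherwise, along a subsequence $\yl \to y_0 \neq 0$ and $\ul(\yl) \to 0$, contradicting $|\yl|^{\frac{n-2k}{2}}|\ul(\yl)| \to c > 0$ since $|\yl| \leq 1$. Consequently $|\ul(\yl)|$ blows up. I set $\tilde r_\l := |\yl|$ and $\tilde v_\l(x) := \tilde r_\l^{\frac{n-2k}{2}} \ul(\tilde r_\l x)$ on $B/\tilde r_\l$. By construction, $\tilde v_\l$ is radial, satisfies $\Delta^k \tilde v_\l - \l\tilde r_\l^{2k}\tilde v_\l = |\tilde v_\l|^{\crit-2}\tilde v_\l$ on $B/\tilde r_\l$, is uniformly bounded in $\dkdeux$ by scale invariance of $\Vert \Delta^{k/2} \cdot\Vert_2$ and the $H_{k,0}^2(B)$-bound on $\ul$, and by Lemma \ref{lem:weak:est} satisfies $|\tilde v_\l(x)| \leq C|x|^{-\frac{n-2k}{2}}$. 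Thus $\tilde v_\l$ is locally uniformly bounded on $\rn \setminus \{0\}$. Elliptic regularity (Theorems \ref{APP:GREEN:th:2:again} and \ref{APP:GREEN:th:2:holder}) combined with Ascoli yields, after extraction, $\tilde v_\l \to \tilde v$ in $C^{2k}_{loc}(\rn \setminus \{0\})$ with $\tilde v$ radial and solving $\Delta^k \tilde v = |\tilde v|^{\crit - 2}\tilde v$ on $\rn \setminus \{0\}$ (using $\l\tilde r_\l^{2k}\to 0$). The normalization $|\tilde v_\l(\yl/|\yl|)| = |\yl|^{\frac{n-2k}{2}}|\ul(\yl)| \to c > 0$ forces $|\tilde v| \equiv c$ on the unit sphere by radiality, so $\tilde v \not\equiv 0$.

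The delicate step is to extend the limit equation as a distributional identity on all of $\rn$, in order to apply Swanson's classification (stated for $\dkdeux$ solutions on $\rn$). The weak $\dkdeux$ limit of $\tilde v_\l$ coincides with $\tilde v$, so $\tilde v \in \dkdeux$. Hence the distribution $T := \Delta^k \tilde v - |\tilde v|^{\crit-2}\tilde v$ belongs to $H^{-k}(\rn)$, while $T$ is supported at $\{0\}$; but such a distribution is a finite linear combination of derivatives of $\delta_0$, none of which lie in $H^{-k}(\rn)$ when $n>2k$, forcing $T=0$. (Equivalently, a direct cutoff computation using $|\tilde v(x)| \leq C|x|^{-\frac{n-2k}{2}}$ shows that all boundary contributions vanish because $n > 2k$.) Theorem \ref{APP:RAD:th:1} then gives
$$\tilde v(x) = \eps \left(\frac{\mu}{\mu^2 + a_{n,k}|x|^2}\right)^{\frac{n-2k}{2}}$$
with $\eps \in \{-1,+1\}$ (non-triviality) and $\mu > 0$.

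To conclude I set $r_\l := \mu |\yl|$. Then $r_\l \to 0$ and $r_\l^{-1}|\yl| \to 1/\mu =: c' \in (0,+\infty)$. The identity
$$r_\l^{\frac{n-2k}{2}} \ul(r_\l x) = \mu^{\frac{n-2k}{2}} \tilde v_\l(\mu x),$$
together with the $C^{2k}_{loc}(\rn\setminus\{0\})$ convergence of $\tilde v_\l$, yields
$$\lim_{\l \to 0} r_\l^{\frac{n-2k}{2}} \ul(r_\l \cdot) = \mu^{\frac{n-2k}{2}} \tilde v(\mu \cdot) = \eps U,$$
as a routine algebraic simplification shows. The main obstacle is the extension argument in the previous paragraph; everything else reduces to standard rescaling, elliptic compactness, and the cited classification.
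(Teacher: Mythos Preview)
Your proof is correct and follows essentially the same route as the paper: rescale at scale $|\yl|$, use Lemma \ref{lem:weak:est} for local boundedness away from $0$, extract a $C^{2k}_{loc}(\rn\setminus\{0\})$ limit via elliptic regularity, identify it through Swanson's classification, and renormalize the scale. The only notable difference is in the justification that the limit lies in $\dkdeux$ and solves the equation distributionally on all of $\rn$: the paper establishes $|\nabla^l W|\in L^{\crit(l)}(\rn)$ for $l=0,\dots,k$ via the Sobolev inequalities \eqref{ineq:sobo:l} and then runs a two-sided cutoff Cauchy argument, whereas you invoke weak $\dkdeux$-compactness and a removable-singularity argument in $H^{-k}$. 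Both are valid; your version is slightly more concise and has the virtue of making the extension of the equation across $0$ explicit, a point the paper leaves implicit.
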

\begin{proof}
It follows from Lemma \ref{lem:cv:out:0} that $\yl\to 0$ as $\l\to 0$. We set $s_\l:=|\yl|$ and we define $W_\l(x):=s_\l^{\frac{n-2k}{2}}\ul(s_\l x)$ for $x\in B_{1/s_\l}(0)$ and $\l>0$. Lemma \ref{lem:weak:est} yields
\beq\label{bnd:vl}
|W_\l(x)|\leq C|x|^{-\frac{n-2k}{2}}\hbox{ for all }x\in B_{1/s_\l}(0)\hbox{ and }\l>0.
\eeq
A change of variable in \eqref{eq:3} yields
\beq\label{eq:vl:2}
\Delta^k W_\l-\l s_\l^{2k}W_\l=|W_\l|^{\crit-2}W_\l\hbox{ in }B_{1/s_\l}(0).
\eeq
Due to elliptic theory (Theorems \ref{APP:GREEN:th:2:again} and \ref{APP:GREEN:th:2:holder}) and Ascoli's theorem, \eqref{bnd:vl} and \eqref{eq:vl:2} yield the existence of $W\in C^{2k}(\rn\setminus\{0\})$ such that
$$\lim_{\l\to 0}W_\l=W\hbox{ in }C^{2k}_{loc}(\rn\setminus\{0\}).$$
Since $W_\l\left(\frac{\yl}{|\yl|}\right)=|\yl|^{\frac{n-2k}{2}}\ul(\yl)$, passing to the limit $\l\to 0$ yields $|W(Y_0)|=c>0$ where $Y_0:=\lim_{\l\to 0}\frac{\yl}{|\yl|}$. Therefore $W\not\equiv 0$.

\smallskip\noindent   We   prove that $W\in \dkdeux$. Let us fix $l\in \{0,...,k\}$. It follows from Sobolev's embedding that there exists $C(l,k,n)>0$ such that
\beq\label{ineq:sobo:l}
\left(\int_{B}|\nabla^l\varphi|^{\crit(l)}\, dx\right)^{\frac{2}{\crit(l)}}\leq C(l,k,n)\int_{B}(\Delta^{k/2}\varphi)^2\, dx
\eeq
for all $\varphi\in H_{k,0}^2(B)$, where $\crit(l):=\frac{2n}{n-2(k-l)}$. Given $R>0$, with a change of variable, we get\begin{eqnarray*}
\left(\int_{B_R(0)\setminus B_{R^{-1}}(0)}|\nabla^lW_\l|^{\crit(l)}\, dx\right)^{\frac{2}{\crit(l)}}&=&\left(\int_{B_{Rr_\l}(0)\setminus B_{R^{-1}r_\l}(0)}|\nabla^l\ul|^{\crit(l)}\, dx\right)^{\frac{2}{\crit(l)}}\\
&\leq& C(l,k,n)\int_{B}(\Delta^{k/2}\ul)^2\, dx\leq C
\end{eqnarray*}
since $\Vert\ul\Vert_{H_k^2}$ is uniformly bounded. Letting $\l\to 0$ and $R\to +\infty$ yields $|\nabla^l W|\in L^{\crit(l)}(\rn)$. We now let $\eta\in C^\infty_c(\rn)$ be such that $\eta(x)=1$ for $x\in B_1(0)$ and $\eta(x)=0$ for $x\in \rn\setminus B_2(0)$. For $R>0$, we define $W_R(x):=\left(1-\eta(Rx)\right)\eta(\frac{x}{R})W(x)$ for all $x\in\rn$. Since $|\nabla^lW|\in L^{\crit(l)}(\rn)$ for all $l\in\{0,...,k\}$, one gets that $(W_R)_R$ is a Cauchy family in $\dkdeux$ as $R\to +\infty$, so it has a limit in $\dkdeux$ as $R\to +\infty$, and then $W\in \dkdeux$. So Theorem \ref{APP:RAD:th:1} yields the existence of  $t>0$ and $\eps\in \{-1,+1\}$ such that
$$W(x)=\eps \left(\frac{t}{t^2+a_{n,k}|x|^2}\right)^{\frac{n-2k}{2}}\hbox{ for all }x\in\rn.$$
Therefore, setting $r_\l:=t s_\l$, we get the conclusion of the Lemma.\end{proof}

\section{Sharp analysis at the furthest scale}
\begin{proposition}\label{lem:nl} Let $(\ul)_\l\in C^{2k}(\Bbar)$ be a family of solutions to \eqref{eq:3}. Then there exists $(\nl)_\l\in (0,+\infty)$ and $\eps_0\in \{-1,+1\}$ such that 
\begin{equation*}
\lim_{\l\to 0}\nl=0;
\end{equation*}
\beq\label{eq:22}
\lim_{\l\to 0}\nl^{\frac{n-2k}{2}}\ul(\nl \cdot)=\eps_0 U\hbox{ in }C^{2k}_{loc}(\rn\setminus\{0\});
\eeq
\begin{equation*}
\lim_{R\to +\infty}\lim_{\l\to 0}\sup_{x\in B\setminus B_{R\nl}(0)}|x|^{\frac{n-2k}{2}}|\ul(x)|=0.
\end{equation*}
\end{proposition}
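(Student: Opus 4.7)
My plan is to exhibit $\nl$ as the scale of the \emph{outermost} bubble of $\ul$, detected by a level-set of the weighted quantity $|x|^{(n-2k)/2}|\ul(x)|$ and then extracted via Lemma \ref{lem:add:bump}. Fix once and for all a threshold $c_0\in (0,C_0)$, where
$$C_0:=\max_{z\in\rn}|z|^{\frac{n-2k}{2}}U(z) = \bigl(2\sqrt{a_{n,k}}\bigr)^{-\frac{n-2k}{2}}$$
is attained at $|z|=a_{n,k}^{-1/2}$. By radiality, it suffices to work with the continuous function $f_\l(r):=r^{\frac{n-2k}{2}}|\ul(re_1)|$ on $[0,1]$, and I set
$$\nu_\l^*:=\sup\bigl\{r\in [0,1]\,:\,f_\l(r)\geq c_0\bigr\},\qquad \sup\emptyset:=0.$$

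The first step is to prove $0<\nu_\l^*\to 0$. That $\nu_\l^*\to 0$ is immediate from Lemma \ref{lem:cv:out:0}. For positivity, rescale at $\mu_\l:=\Vert\ul\Vert_\infty^{-2/(n-2k)}$ by setting $v_\l(z):=\mu_\l^{(n-2k)/2}\ul(\mu_\l z)$: then $\Vert v_\l\Vert_\infty=1$, and Lemma \ref{lem:weak:est} forces the maximum point to stay in a fixed compact of $\rn$, so elliptic theory and Swanson's Theorem \ref{APP:RAD:th:1} (used exactly as in Lemma \ref{lem:add:bump}) give $v_\l\to \pm U$ in $C^{2k}_{loc}(\rn)$. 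Evaluating at $z_0:=e_1/\sqrt{a_{n,k}}$ yields $f_\l(\mu_\l/\sqrt{a_{n,k}})\to C_0>c_0$, whence $\nu_\l^*>0$ for $\l$ small. Setting $\yl:=\nu_\l^* e_1$, the compactness of $\{r:f_\l(r)\geq c_0\}$ and continuity give $|\yl|^{(n-2k)/2}|\ul(\yl)|=c_0\in (0,+\infty)$, and Lemma \ref{lem:add:bump} applied to $(\yl)$ produces $\nl$ with $\nl\to 0$, $\nl^{-1}|\yl|\to c'\in(0,+\infty)$, and $\nl^{(n-2k)/2}\ul(\nl\cdot)\to \eps_0 U$ in $C^{2k}_{loc}(\rn\setminus\{0\})$ for some $\eps_0\in\{-1,+1\}$. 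This delivers the first two conclusions and, crucially, the relation $\nu_\l^*\sim c'\nl$.

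For the third conclusion I argue by contradiction. Assume there exist $\eta>0$, $\l_i\to 0$, $R_i\to +\infty$ and $z_i\in B$ with $|z_i|\geq R_i\nu_{\l_i}$ and $|z_i|^{(n-2k)/2}|u_{\l_i}(z_i)|\geq \eta$. Extracting a subsequence (the upper bound $C$ being supplied by Lemma \ref{lem:weak:est}), a second application of Lemma \ref{lem:add:bump} to $(z_i)$ yields scales $\tilde r_i\to 0$ with $\tilde r_i^{-1}|z_i|\to c''\in(0,+\infty)$ and $\tilde r_i^{(n-2k)/2}u_{\l_i}(\tilde r_i\cdot)\to \pm U$ in $C^{2k}_{loc}(\rn\setminus\{0\})$. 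The key point is that $\tilde r_i/\nu_{\l_i}\asymp |z_i|/\nu_{\l_i}\geq R_i\to+\infty$. Evaluating this second rescaling at the peak $z_0=e_1/\sqrt{a_{n,k}}$ yields $f_{\l_i}(\tilde r_i/\sqrt{a_{n,k}})\to C_0>c_0$, hence $\nu_{\l_i}^*\geq\tilde r_i/\sqrt{a_{n,k}}$ for $i$ large. Combined with $\nu_{\l_i}^*\sim c'\nu_{\l_i}$, this forces $\tilde r_i/\nu_{\l_i}$ to remain bounded, contradicting $\tilde r_i/\nu_{\l_i}\to+\infty$.

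The main obstacle is the correct calibration of the threshold $c_0$: it must sit strictly below the peak value $C_0$ of the weighted bubble $|z|^{(n-2k)/2}U(z)$, so that any hypothetical bubble beyond the scale $\nl$ is automatically captured by the super-level set $\{f_\l\geq c_0\}$, which is what makes the final contradiction trigger. A secondary technical point is the nontriviality of the rescaled limit $v$ in the first step; it is handled by the weighted bound of Lemma \ref{lem:weak:est}, which rules out escape of the $L^\infty$-mass of $v_\l$ to infinity, so that the Swanson classification is applicable.
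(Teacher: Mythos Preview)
Your proof is correct and takes a genuinely different route from the paper's. The paper proceeds by an inductive exhaustion: it defines a hypothesis $(\mathcal{H}_N)$ asserting the existence of $N$ nested bubble scales $\mu_{\l,1}\ll\cdots\ll\mu_{\l,N}$, shows $(\mathcal{H}_1)$ by rescaling at the $L^\infty$ maximum, proves that failure of the decay statement at scale $\mu_{\l,N}$ forces $(\mathcal{H}_{N+1})$ via Lemma~\ref{lem:add:bump}, and then uses the energy bound $\Vert\ul\Vert_{\crit}\leq M$ to cap $N$ by $M^{\crit}/\int_{\rn}U^{\crit}$; the outermost scale $\mu_{\l,N}$ is then $\nl$. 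By contrast, you single out the outermost bubble in one stroke via the super-level set $\{r:f_\l(r)\geq c_0\}$ of the weighted profile, with the calibration $c_0<C_0$ guaranteeing that any bubble at a larger scale would enlarge this set. Your argument bypasses the bubble-counting step entirely: the energy bound is used only indirectly, through Lemmas~\ref{lem:weak:est}--\ref{lem:add:bump}, rather than as an explicit upper bound on the number of scales. What the paper's approach buys is a full bubble decomposition (all intermediate scales $\mu_{\l,i}$), which is not needed here but is the natural object in more general blow-up analyses; what your approach buys is directness and the avoidance of an induction whose termination relies on the global $L^{\crit}$ bound.
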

\begin{proof} Given $N\geq 1$, we say that $({\mathcal H}_N)$ holds if there exists $(\mu_{\l,1})_\l,...,(\mu_{\l,N})_\l\in (0,+\infty)$ such that
$$\lim_{\l\to 0}\frac{\mu_{\l,i}}{\mu_{\l,i+1}}=0\hbox{ for all }i=1,...,N-1\hbox{ and }\lim_{\l\to 0}\mu_{\l,N}=0,$$
and that for all $i\in \{1,...,N\}$, there exists $\eps_i\in \{-1,+1\}$ such that
$$\lim_{\l\to 0}v_{\l,i}=\eps_i U\hbox{ in }C^{2k}_{loc}(\rn\setminus\{0\})\hbox{ where }v_{\l,i}(x):=\mu_{\l,i}^{\frac{n-2k}{2}}\ul(\mu_{\l,i}x)\hbox{ for all }x\in B_{1/\mu_{\l,i}}(0),$$
while for $i=1$, this convergence holds in $C^{2k}_{loc}(\rn)$. 

\medskip\noindent {\bf Step 1:} We claim that $({\mathcal H}_1)$ holds. 

\smallskip\noindent We prove the claim. We define $\xl\in B$ and $\mu_{\l,1}:=\ml:=|\ul(\xl)|^{-\frac{2}{n-2k}}$ where $|\ul(\xl)|=\sup_B|\ul|$. We define
\beq\label{def:Ul}
\Ul(x):=\ml^{\frac{n-2k}{2}}\ul(\ml x)\hbox{ for all }x\in B_{1/\ml}(0).
\eeq
It then follows from elliptic theory (Theorems \ref{APP:GREEN:th:2:again} and \ref{APP:GREEN:th:2:holder}) that there exists $\tilde{U}\in C^{2k}(\rn)$ such that $\lim_{\l\to 0}\Ul=\tilde{U}$ in $C^{2k}_{loc}(\rn)$ and $\Delta^k \tilde{U}=|\tilde{U}|^{\crit-2}\tilde{U}$. The definiton of $\ml$ and Lemma \ref{lem:weak:est} yield $|\xl|\le C\ml$, so there exists $X_0\in\rn$ such that $\lim_{\l\to 0}\frac{\xl}{\ml}=X_0$. We have that $|\Ul(\frac{\xl}{\ml})|=1$, so that, letting $\l\to 0$ yields $|\tilde{U}(X_0)|=1$. Therefore $\tilde{U}\not\equiv 0$ and $|\tilde{U}|\leq |\tilde{U}(x_0)|=1$. As in Lemma \ref{lem:add:bump}, we get that $\tilde{U}\in\dkdeux$ and Theorem  \ref{APP:RAD:th:1} yields the conclusion.

\medskip\noindent {\bf Step 2:} Assume that $({\mathcal H}_N)$ holds for some $N\geq 1$ and that
\beq\label{hyp:contrad}
\lim_{R\to +\infty}\lim_{\l\to 0}\sup_{x\in B\setminus B_{R\mu_{\l,N}}(0)}|x|^{\frac{n-2k}{2}}|\ul(x)|>0.
\eeq
Then $({\mathcal H}_{N+1})$ holds. 

\smallskip\noindent We prove the claim. It follows from \eqref{hyp:contrad} that there exists $(\yl)_\l\in B$ such that $\lim_{\l\to 0}\frac{|\yl|}{\mu_{\l,N}}=+\infty$ and $\lim_{\l\to 0}|\yl|^{\frac{n-2k}{2}}|\ul(\yl)|=c>0$. We define $\mu_{\l, N+1}:=r_\l$, where $r_\l>0$ is given by Lemma \ref{lem:add:bump}. As one checks, we get that $({\mathcal H}_{N+1})$ holds. The claim is proved.

\medskip\noindent {\bf Step 3:} We claim that there exists $C(M,n,k)>0$ such that if $({\mathcal H}_N)$ holds, then $N\leq C(M,n,k)$.

\smallskip\noindent We prove the claim. For any $i\in \{1,...,N\}$, we get that
\begin{eqnarray*}
\lim_{R\to +\infty}\lim_{\l\to 0}\int_{B_{R\mu_{\l,i}}(0)\setminus B_{R^{-1}\mu_{\l,i}}(0)}|\ul|^{\crit}\, dx&=&\lim_{R\to +\infty}\lim_{\l\to 0}\int_{B_{R}(0)\setminus B_{R^{-1}}(0)}|v_{\l,i}|^{\crit}\, dx\\
&=&\int_{\rn}U^{\crit}\, dx
\end{eqnarray*}
Since the $N$ domains $B_{R\mu_{\l,i}}(0)\setminus B_{R^{-1}\mu_{\l,i}}(0)$ are distinct for $\l\to 0$, we get that
$$\sum_{i=1}^N\int_{B_{R\mu_{\l,i}}(0)\setminus B_{R^{-1}\mu_{\l,i}}(0)}|\ul|^{\crit}\, dx=\int_{\bigcup_i B_{R\mu_{\l,i}}(0)\setminus B_{R^{-1}\mu_{\l,i}}(0)}|\ul|^{\crit}\, dx\leq \int_{B}|\ul|^{\crit}\, dx\leq M^{\crit}.$$
And then $N\leq C(M,n,k)$ with $C(M,n,k):=\frac{M^{\crit}}{\int_{\rn}U^{\crit}\, dx}$. This proves the claim.

\medskip\noindent{\bf Step 4:} We conclude the proof of the Proposition. We let $N\geq 1$ be maximal such that $({\mathcal H}_N)$ holds: the existence follows from Step 2. It  follows from Step 1 that
$$\lim_{R\to +\infty}\lim_{\l\to 0}\sup_{x\in B\setminus B_{R\mu_{\l,N}}(0)}|x|^{\frac{n-2k}{2}}|\ul(x)|=0.$$
Therefore Proposition \ref{lem:nl} follows by taking $\nl:=\mu_{\l, N}$. \end{proof}

\begin{proposition}\label{prop:gamma} Let $(\ul)_\l\in C^{2k}(\Bbar)$ be a family of solutions to \eqref{eq:3}, and let $(\nl)_\l$ be as in Proposition \ref{lem:nl}. Then for any $\gamma\in (0,n-2k)$, there exists $C>0$ such that
\beq\label{ineq:gamma}
|\ul(x)|\leq C\frac{\nl^{\frac{n-2k}{2}-\gamma}}{|x|^{n-2k-\gamma}}\hbox{ for all }x\in B\setminus B_{\nl}(0)\hbox{ and }\l\to 0.
\eeq

\end{proposition}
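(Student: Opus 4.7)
The plan is to use the Green's representation formula for an appropriate linearization of equation \eqref{eq:3} together with the pointwise Green's function estimate of Theorem \ref{APP:GREEN:th:Green:pointwise:BIS}. The starting observation, from Proposition \ref{lem:nl}, is that the potential $V_\l(x):=|\ul(x)|^{\crit-2}$ satisfies $V_\l(x)\leq \delta|x|^{-2k}$ on $B\setminus B_{R\nl}(0)$ for every prescribed $\delta>0$, provided $R$ is chosen large and $\l$ small. Outside the bubble, $V_\l$ is therefore an almost Hardy potential whose coefficient can be made as small as one likes.

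One then decomposes $V_\l=V_\l^{\text{in}}+V_\l^{\text{out}}$ with $V_\l^{\text{in}}:=V_\l\chi_{B_{R\nl}(0)}$ and rewrites \eqref{eq:3} as the linear equation $(\Delta^k-V_\l^{\text{out}}-\l)\ul = V_\l^{\text{in}}\ul$ on $B$ with vanishing Dirichlet trace of order $k-1$. The source is concentrated on the bubble, while the linear operator carries an almost Hardy potential of small coefficient. Let $G_\l^{\text{out}}$ denote the Green's function of this operator on $B$ with Dirichlet boundary conditions. Theorem \ref{APP:GREEN:th:Green:pointwise:BIS} (whose proof relies on Lemma \ref{APP:GREEN:lem:main}) then yields, for any prescribed $\gamma\in(0,n-2k)$ and upon choosing $\delta$ small enough (and thus $R$ large and $\l$ small enough), the pointwise bound
$$|G_\l^{\text{out}}(x,y)|\leq \frac{C(\gamma)}{|x-y|^{n-2k}}\left(\frac{\max(|x|,|y|)}{\min(|x|,|y|)}\right)^{\gamma}$$
for all distinct $x,y\in B\setminus\{0\}$. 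The correction factor $(\max/\min)^\gamma$ accounts for the perturbation of the singular exponents of $\Delta^k$ at the origin by the Hardy-type potential.

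Applying Green's representation gives
$$\ul(x)=\int_{B_{R\nl}(0)}G_\l^{\text{out}}(x,y)\,V_\l^{\text{in}}(y)\ul(y)\,dy.$$
For $x\in B$ with $|x|\geq 2R\nl$, every $y$ in the support of $V_\l^{\text{in}}$ satisfies $|y|\leq R\nl<|x|/2$, so $|x-y|\sim |x|$, $\max(|x|,|y|)=|x|$, and $\min(|x|,|y|)=|y|$; the Green's function bound thus gives
$$|\ul(x)|\leq \frac{C(\gamma)}{|x|^{n-2k-\gamma}}\int_{B_{R\nl}(0)}\frac{|\ul|^{\crit-1}(y)}{|y|^{\gamma}}\,dy.$$
The change of variable $y=\nl z$, together with the convergence \eqref{eq:22} and the identity $(\crit-1)(n-2k)/2=(n+2k)/2$, transforms this integral into $\nl^{(n-2k)/2-\gamma}\int_{B_R(0)}|z|^{-\gamma}\big|\nl^{(n-2k)/2}\ul(\nl z)\big|^{\crit-1}\,dz$. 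By Lemma \ref{lem:weak:est} one has $|\nl^{(n-2k)/2}\ul(\nl z)|\leq C|z|^{-(n-2k)/2}$, so the integrand is dominated by $C|z|^{-\gamma-(n+2k)/2}$, which is integrable on $B_R(0)$ (since $n>2k$ and $\gamma<n-2k$); the integral is therefore uniformly bounded in $\l$. This yields \eqref{ineq:gamma} on $\{|x|\geq 2R\nl\}$. On the remaining annulus $\nl\leq |x|\leq 2R\nl$, the estimate is immediate from \eqref{eq:22} since $|x|\sim\nl$ and $|\ul(x)|\leq C\nl^{-(n-2k)/2}$ there.

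The main technical obstacle lies in Theorem \ref{APP:GREEN:th:Green:pointwise:BIS}, i.e., establishing the sharp pointwise estimate on the Green's function of a polyharmonic operator with almost Hardy potential. This is the core analytic contribution of the paper and rests on the delicate regularity Lemma \ref{APP:GREEN:lem:main}; once that bound is available, Proposition \ref{prop:gamma} follows by the concatenation above.
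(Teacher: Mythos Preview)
Your overall strategy is sound and uses the same main tool as the paper---the pointwise bound on the Green's function of $\Delta^k-\lambda-V_\l^{\text{out}}$ furnished by Theorem~\ref{APP:GREEN:th:Green:pointwise:BIS}.  The difference is that you represent $\ul(x)$ as a \emph{volume} integral over $B_{R\nl}(0)$ against the source $V_\l^{\text{in}}\ul$, whereas the paper applies Green's formula on the annulus $B\setminus B_{2R\nl}(0)$ (where the source vanishes) and is left only with \emph{boundary} integrals on $\partial B_{2R\nl}(0)$.

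This distinction is not merely cosmetic: your volume integral does not converge for the full range of $\gamma$.  You claim that $|z|^{-\gamma-(n+2k)/2}$ is integrable on $B_R(0)$ ``since $n>2k$ and $\gamma<n-2k$''.  But integrability on $B_R(0)\subset\rn$ requires $\gamma+(n+2k)/2<n$, i.e.\ $\gamma<\tfrac{n-2k}{2}$, which is only \emph{half} the stated range.  The weak estimate $|U_\l(z)|\leq C|z|^{-(n-2k)/2}$ from Lemma~\ref{lem:weak:est} cannot be improved near $z=0$ (inner bubbles may concentrate there), and replacing the pointwise bound by H\"older against $\Vert\ul\Vert_{\crit}$ gives the same restriction $\gamma\crit<n$, i.e.\ $\gamma<\tfrac{n-2k}{2}$.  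Thus your argument proves \eqref{ineq:gamma} only for $\gamma\in\bigl(0,\tfrac{n-2k}{2}\bigr)$.

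The paper's boundary approach sidesteps this: on $\partial B_{2R\nl}(0)$ one has $|y|=2R\nl$, so the factor $|y|^{-\gamma}$ coming from the Green's estimate contributes a fixed power of $\nl$ and no singular integral in $y$ arises.  Each boundary term then scales exactly as $\nl^{(n-2k)/2-\gamma}|x|^{2k-n+\gamma}$ for every $\gamma\in(0,n-2k)$.  If you wish to salvage the volume approach you would need either an iteration scheme or an additional argument to handle the inner region $|y|\ll\nl$; as written, the proof is incomplete for the full statement.
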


\begin{proof} We fix $R>0$ and we define $V_\l:={\bf 1}_{B\setminus B_{R\nl}(0)}|\ul|^{\crit-2}$ so that 
$$(\Delta^k-\l-V_\l)\ul=0\hbox{ in }B\setminus B_{2R\nl}(0).$$
Let $\mu_\gamma>0$  be as in the statement of Theorem \ref{APP:GREEN:th:Green:pointwise:BIS}. It follows from Proposition \ref{lem:nl} that there exists $R=R_\gamma>0$ such that
\begin{equation*}
|X|^{2k}|\l+V_\l(x)|\leq \mu_\gamma\hbox{ for all }x\in B\hbox{ and }\l>0\hbox{ small enough}.
\end{equation*}
We let $G_\l$ be the Green's function for $\Delta^k-\l-V_\l$ on $B$ with Dirichlet boundary condition given by Theorem \ref{APP:GREEN:th:Green:main} with the pointwise controls of Theorem \ref{APP:GREEN:th:Green:pointwise:BIS}. We choose $x\in B$ such that $|x|>3R\nl$. Since $(\Delta^k-\l-V_\l)\ul=0$, we get that

\begin{eqnarray*}
\ul(x)&=&\int_{B\setminus B_{2R\nl}(0)} G_\l(x,\cdot)(\Delta^k-\l-V_\l)\ul\, dy\\
&&+\int_{\partial(B\setminus B_{2R\nl}(0))}\sum_{i=0}^{k-1}\left(\partial_\nu\Delta^{i}\ul\Delta^{k-1-i}G_\l(x,\cdot)- \Delta^{i}\ul\partial_\nu\Delta^{k-1-i}G_\l(x,\cdot)\right)\, d\sigma\\
&=&\sum_{i=0}^{k-1}\int_{\partial B_{2R\nl}(0)}\nabla^{1+2i}\ul\star \nabla^{2(k-1-i)}_yG_\l(x,\cdot)+\nabla^{2i}\ul\star \nabla^{1+2(k-1-i)}_yG_\l(x,\cdot)
\end{eqnarray*}
where $T\star S$ denotes any linear combination of contractions of the tensors $T$ and $S$. For all $j=0,...,2k-1$, it follows from the convergence  \eqref{eq:22} that 
$$|\nabla^{j}\ul(y)|\leq C \nl^{-\frac{n-2k}{2}-j}\hbox{ for }y\in \partial B_{2R\nl}(0).$$
The pointwise controls of Theorem \ref{APP:GREEN:th:Green:pointwise:BIS} and \eqref{APP:GREEN:ineq:36:bis} yield
$$|\nabla_y^jG(x,y)|\leq C|y|^{-\gamma-j} |x|^{2k-n+\gamma}\hbox{ for all }x\in B\setminus B_{3R\nl}(0)\hbox{ and }y\in \partial B_{2R\nl}(0).$$
Therefore, we get that
\begin{eqnarray*}
\ul(x)&\leq &C \nl^{\frac{n-2k}{2}-\gamma} |x|^{2k-n+\gamma}\hbox{ for all }x\in B\setminus B_{3R\nl}(0).
\end{eqnarray*}
The validity of this inequality on $B_{3R\nl}(0)\setminus B_{\nl}(0)$ is a consequence of \eqref{eq:22}. This proves Proposition \ref{prop:gamma}.\end{proof}

\begin{proposition} Let $(\ul)_\l\in C^{2k}(\Bbar)$ be a family of solutions to \eqref{eq:3}, and let $(\nl)_\l$ be as in Proposition \ref{lem:nl}. Then for any $\omega\subset\subset B$, there exists $C>0$ such that
\beq\label{upp:ul}
|\ul(x)|\leq C\frac{\nl^{\frac{n-2k}{2}}}{|x|^{n-2k}}\hbox{ for all }x\in \omega\setminus B_{\nl}(0)\hbox{ and }\l\to 0,
\eeq
and
\beq\label{lim:ul:green}
\lim_{\l\to 0}\frac{\ul}{\nl^{\frac{n-2k}{2}}}=H:=\eps_0\left(\int_{\rn}U^{\crit-1}\, dx\right) G_0(0,\cdot)\hbox{ in }C^{2k}_{loc}(B\setminus\{0\})
\eeq
where $G_0(0,\cdot)$ is the Green's function for $\Delta^k$ on $B$ with Dirichlet boundary condition. In particular $\Delta^k H=0$ in $B\setminus\{0\}$.
\end{proposition}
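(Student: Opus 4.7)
The plan is to feed a Green's representation of $\ul$ into the bubble rescaling, using the Green's function $G_\l$ of the "almost Hardy" operator already introduced in the proof of Proposition \ref{prop:gamma}. I fix $R>0$ large, set $V_\l:=\mathbf{1}_{B\setminus B_{R\nl}(0)}|\ul|^{\crit-2}$, and let $G_\l$ be the Green's function for $P_\l:=\Delta^k-\l-V_\l$ on $B$ with Dirichlet data (Theorem \ref{APP:GREEN:th:Green:main}); for $R$ large enough, Proposition \ref{lem:nl} yields $|y|^{2k}(\l+V_\l(y))\leq\mu_\gamma$ so that the sharp pointwise controls of Theorem \ref{APP:GREEN:th:Green:pointwise:BIS} apply. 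Since $(P_\l\ul)(y)=\mathbf{1}_{B_{R\nl}(0)}|\ul|^{\crit-2}\ul(y)$ and $\ul$ has vanishing Dirichlet data, the representation formula reads
\[\ul(x)=\int_{B_{R\nl}(0)}G_\l(x,y)\,|\ul|^{\crit-2}\ul(y)\,dy.\]

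For the pointwise bound \eqref{upp:ul}, I fix $\omega\subset\subset B$ and $x\in\omega$ with $|x|>\nl$. When $R\nl\leq|x|/2$, the pointwise control of Theorem \ref{APP:GREEN:th:Green:pointwise:BIS} should give $|G_\l(x,y)|\leq C|x|^{-(n-2k)}$ uniformly in $y\in B_{R\nl}(0)$. The change of variables $y=\nl z$ combined with the weak bound $|\Ul(z)|\leq C|z|^{-(n-2k)/2}$ from Lemma \ref{lem:weak:est} (with $\Ul(z):=\nl^{(n-2k)/2}\ul(\nl z)$) then yields
\[\int_{B_{R\nl}(0)}|\ul|^{\crit-1}\,dy=\nl^{(n-2k)/2}\int_{B_R(0)}|\Ul|^{\crit-1}\,dz\leq C\nl^{(n-2k)/2},\]
where integrability on $B_R(0)$ follows from $(n+2k)/2<n$. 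Combining these estimates delivers \eqref{upp:ul}.

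For \eqref{lim:ul:green}, I rescale the representation formula to
\[\frac{\ul(x)}{\nl^{(n-2k)/2}}=\int_{B_R(0)}G_\l(x,\nl z)\,|\Ul|^{\crit-2}\Ul(z)\,dz.\]
For $x\in B\setminus\{0\}$ fixed, I expect the vanishing of the perturbations (in the Hardy sense $|y|^{2k}(\l+V_\l)\to 0$) together with the pointwise Green's function controls to force $G_\l(x,\nl z)\to G_0(x,0)$ uniformly in $z\in B_R(0)$. Combined with $\Ul\to\eps_0 U$ in $C^{2k}_{loc}(\rn\setminus\{0\})$ from Proposition \ref{lem:nl} and the uniform domination $|\Ul|^{\crit-1}\leq C|z|^{-(n+2k)/2}\in L^1(B_R(0))$, dominated convergence will give
\[\lim_{\l\to 0}\frac{\ul(x)}{\nl^{(n-2k)/2}}=\eps_0 G_0(x,0)\int_{B_R(0)}U^{\crit-1}\,dz,\]
and sending $R\to\infty$ produces $H(x)$. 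To upgrade this pointwise convergence to $C^{2k}_{loc}(B\setminus\{0\})$, I will use the equation $\Delta^k(\ul/\nl^{(n-2k)/2})=(\l+|\ul|^{\crit-2})(\ul/\nl^{(n-2k)/2})$, Lemma \ref{lem:cv:out:0} (which ensures $|\ul|^{\crit-2}\to 0$ in $C^0_{loc}(\Bbar\setminus\{0\})$), and the elliptic regularity of Theorems \ref{APP:GREEN:th:2:again} and \ref{APP:GREEN:th:2:holder}.

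The hard part will be extracting from Theorem \ref{APP:GREEN:th:Green:pointwise:BIS} both the clean pointwise bound $G_\l(x,y)\leq C|x|^{-(n-2k)}$, without the Hardy-type blow-up in $y$, and the uniform convergence $G_\l(x,\nl z)\to G_0(x,0)$. This is precisely the sharp behaviour that the "almost Hardy" Green's function analysis of the paper is designed to supply; a weaker estimate on $G_\l$ would only reproduce the non-sharp exponent $n-2k-\gamma$ of Proposition \ref{prop:gamma}, not the sharp exponent $n-2k$ needed here.
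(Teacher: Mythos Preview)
Your approach has a genuine gap at exactly the point you flag as ``the hard part''. The pointwise control of Theorem \ref{APP:GREEN:th:Green:pointwise:BIS} for the almost-Hardy Green's function of $P_\l=\Delta^k-\l-V_\l$ carries an unavoidable factor $\bigl(\max\{|x|,|y|\}/\min\{|x|,|y|\}\bigr)^\gamma$. For $y\in B_{R\nl}(0)$ and $|x|\gg\nl$, this reads $|G_\l(x,y)|\leq C|x|^{2k-n+\gamma}|y|^{-\gamma}$, and plugging it into your representation together with $|\Ul|\leq C|z|^{-(n-2k)/2}$ gives only $|\ul(x)|\leq C\nl^{(n-2k)/2-\gamma}|x|^{2k-n+\gamma}$, i.e.\ you recover Proposition \ref{prop:gamma}, not the sharp exponent. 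The clean bound $|G_\l(x,y)|\leq C|x|^{2k-n}$ uniformly over $y\in B_{R\nl}(0)$ is \emph{not} supplied by Theorem \ref{APP:GREEN:th:Green:pointwise:BIS}; the $|y|^{-\gamma}$ loss is intrinsic to the Hardy-type framework and is precisely what limits that theorem. Likewise the convergence $G_\l(x,\nl z)\to G_0(x,0)$ is not available for this Green's function from the stated results.

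The paper's proof takes a different route: it uses the Green's function $G_\l$ of the \emph{bounded-coefficient} operator $\Delta^k-\l$ (Theorem \ref{APP:GREEN:th:green:nonsing}), for which the clean estimate $|G_\l(x,y)|\leq C|x-y|^{2k-n}$ of \eqref{APP:GREEN:ctrl:G} holds with no Hardy factor. The price is that the full nonlinearity stays on the right-hand side, so $\ul(x)=\int_B G_\l(x,y)|\ul|^{\crit-2}\ul(y)\,dy$ over all of $B$. The integral is then split according to $|x-y|\lessgtr|x|/2$ and $|y|\lessgtr R^{\pm1}\nl$, and the regions with $|y|\geq R\nl$ or $|y|\geq |x|/2$ are controlled by feeding in the already-proved non-sharp estimate \eqref{ineq:gamma} from Proposition \ref{prop:gamma}. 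In short: the almost-Hardy Green's function is used once to obtain Proposition \ref{prop:gamma}, and then the ordinary Green's function together with Proposition \ref{prop:gamma} is used to bootstrap to the sharp bound \eqref{upp:ul} and the limit \eqref{lim:ul:green}.
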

\begin{proof} Let us fix $x\in \omega$ such that $|x|>4\nl$. Let $G_\l$ be the Green's function of $\Delta^k-\l$ in $B$ with Dirichlet boundary condition. The existence follows from Theorem \ref{APP:GREEN:th:Green:main}. Green's representation formula yields
\begin{eqnarray}
\ul(x)&=& \int_B G_\l(x,y)|\ul(y)|^{\crit-2}\ul(y)\, dy= \int_{|x-y|>|x|/2}+\int_{|x-y|<|x|/2}\label{ineq:39}
\end{eqnarray}
We estimate these terms separately. Regarding the second term of \eqref{ineq:39}, for $y\in B$ such that $|x-y|<|x|/2$, we have that $|y|>|x|/2>2\nl$, and we apply \eqref{ineq:gamma} and we use \eqref{APP:GREEN:ctrl:G} to get
\begin{eqnarray}
&&\left|\int_{|x-y|<|x|/2}G_\l(x,y)|\ul(y)|^{\crit-2}\ul(y)\, dy\right|\nonumber\\
&&\leq  C\int_{|x-y|<|x|/2}|x-y|^{2k-n}\frac{\nl^{(\frac{n-2k}{2}-\gamma)(\crit-1)}}{ |x|^{(n-2k-\gamma)(\crit-1)}}\, dy\leq C \frac{\nl^{\frac{n-2k}{2}}}{|x|^{n-2k}}\cdot \left(\frac{\nl}{ |x|}\right)^{2k-\gamma(\crit-1)}\label{ineq:38}
\end{eqnarray}
We split the first term of \eqref{ineq:39} in three parts. First, using the pointwise control \eqref{APP:GREEN:ctrl:G}, H\"older's inequality and $\Vert \ul\Vert_{\crit}\leq M$, we get
\begin{eqnarray}
&&\left|\int_{\{|x-y|<|x|/2\}\cap \{|y|<R^{-1}\nl\}}G_\l(x,y)|\ul(y)|^{\crit-2}\ul(y)\, dy\right|\nonumber\\
&&\leq \int_{\{|x-y|<|x|/2\}\cap \{|y|<R^{-1}\nl\}}|x-y|^{2k-n}|\ul(y)|^{\crit-1}\, dy\nonumber\\
&&\leq  C |x|^{2k-n}\int_{B_{R^{-1}\nl}(0)}|\ul(y)|^{\crit-1}\, dy\nonumber\\
&&\leq C|x|^{2k-n}\left(\int_{B_{R^{-1}\nl}(0)}\, dy\right)^{\frac{1}{\crit}}\left(\int_{B_{R^{-1}\nl}(0)}|\ul(y)|^{\crit}\, dy\right)^{\frac{\crit-1}{\crit}}\nonumber\\
&&\leq    C|x|^{2k-n}\left( R^{-1}\nl \right)^{\frac{n-2k}{2}}= CR^{-\frac{n-2k}{2}}\frac{\nl^{\frac{n-2k}{2}}}{|x|^{n-2k}}\label{ineq:37}
\end{eqnarray}
Now, using \eqref{APP:GREEN:ctrl:G} similarly and the pointwise control \eqref{ineq:gamma}, we get
 \begin{eqnarray}
&&\left|\int_{\{|x-y|<|x|/2\}\cap \{|y|\geq R\nl\}}G_\l(x,y)|\ul(y)|^{\crit-2}\ul(y)\, dy\right|\nonumber\\
&&\leq \int_{\{|x-y|<|x|/2\}\cap \{|y|\geq R\nl\}}|x-y|^{2k-n}|\ul(y)|^{\crit-1}\, dy\nonumber\\
&&\leq  C |x|^{2k-n}\int_{B\setminus B_{R\nl}(0)}\frac{ \nl^{(\frac{n-2k}{2}-\gamma)(\crit-1)}}{ |y|^{(n-2k-\gamma)(\crit-1)}}\, dy\leq C \frac{|x|^{2k-n} \nl^{\frac{n-2k}{2}}}{R^{2k-\gamma(\crit-1)}}\label{ineq:36}
\end{eqnarray}
for $\gamma<\frac{2k}{\crit-1}$. Taking $R=1$ and plugging \eqref{ineq:38}, \eqref{ineq:37}, \eqref{ineq:36}  in \eqref{ineq:39}, we get \eqref{upp:ul}.

\smallskip\noindent We fix $x\in B$ such that $x\neq 0$, so that all the preceding estimates hold. For any $R>0$, with a change of variable, we have that

\begin{eqnarray*}
&&\int_{\{|x-y|<|x|/2\}\cap B_{R\nl}\setminus B_{R^{-1}\nl}(0)}G_\l(x,y)|\ul(y)|^{\crit-2}\ul(y)\, dy\\
&&=\int_{B_{R\nl}\setminus B_{R^{-1}\nl}(0)}G_\l(x,y)|\ul(y)|^{\crit-2}\ul(y)\, dy\\
&&=\nl^{\frac{n-2k}{2}}\int_{B_{R}\setminus B_{R^{-1}}(0)}G_\l(x,\nl z)|\Ul(z)|^{\crit-2}\Ul(z)\, dz.
\end{eqnarray*}
Independently, given $x\in B\setminus\{0\}$, the definition and uniqueness of Green's functions of Theorem \ref{APP:GREEN:th:green:nonsing} combined with the integral bound \eqref{APP:GREEN:eq:16} yields the convergence of $(G_\lambda(x,\cdot))_\lambda$ to $G_0(x,\cdot)$ uniformly in $C^{0}_{loc}(B\setminus\{x\})$ as $\l\to 0$. Therefore,  \eqref{eq:22} yields
\begin{eqnarray*}
&&\lim_{R\to +\infty}\lim_{\l\to 0}\nl^{-\frac{n-2k}{2}}\int_{\{|x-y|<|x|/2\}\cap B_{R\nl}\setminus B_{R^{-1}\nl}(0)}G_\l(x,y)|\ul(y)|^{\crit-2}\ul(y)\, dy\\
&&=\eps_0\left(\int_{\rn}U^{\crit-1}\, dx\right)G_0(0,x).
\end{eqnarray*}
Combining this latest limit with \eqref{ineq:38}, \eqref{ineq:37}, \eqref{ineq:36} and \eqref{ineq:39}, we get the pointwise limit in \eqref{lim:ul:green}. The convergence in $C^{2k}$ is consequence of elliptic theory (Theorems \ref{APP:GREEN:th:2:again} and \ref{APP:GREEN:th:2:holder}). This ends the proof of the Proposition.\end{proof}

\section{Conclusion via the Pohozaev-Pucci-Serrin identity}\label{sec:poho}
The following identities are essentially in Pucci-Serrin \cite{pucci-serrin-var-id} and are generalizations of the historical Pohozaev identity \cite{pohozaev}. We recall  them for the sake of completeness. The first lemma is a straightforward iteration:
\begin{lemma}\label{lem:deltap} For any $v\in C^\infty(\Omega)$, where $\Omega$ is a domain of $\rn$, we have that
\begin{equation*}
\left\{\begin{array}{cc}
\Delta ^p(x^i\partial_i v)=2p\Delta ^p v+x^i\partial_i\Delta ^p v&\hbox{ for all }p\in\nn\hbox{ and }\\
\partial_j\Delta ^p(x^i\partial_i v)=(2p+1)\partial_j\Delta ^p v+x^i\partial_i(\partial_j\Delta ^p v)&\hbox{ for all }p\in\nn\hbox{ and }j=1,...,n.
\end{array}\right\}.
\end{equation*}
These identities rewrite $\Delta ^\frac{l}{2}(x^i\partial_i v)=l\Delta ^\frac{l}{2} v+x^i\partial_i(\Delta ^\frac{l}{2} v)$ for all $l\in\nn$.
\end{lemma}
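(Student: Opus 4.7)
The plan is to view the two identities as commutator relations with the Euler dilation operator $R := x^i\partial_i$ and to proceed by induction on $p$.

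\smallskip\noindent First, I would establish the base commutator $[\Delta,R]=2\Delta$. A direct computation gives
\[
\Delta(x^i\partial_i v)=\partial_j\partial_j(x^i\partial_i v)=\partial_j(\partial_j v+x^i\partial_j\partial_i v)=2\Delta v+x^i\partial_i\Delta v,
\]
which is exactly the case $p=1$ of the first identity (the case $p=0$ being trivial). Next, I would prove the first identity by induction on $p\geq 1$. Assume $\Delta^p(x^i\partial_i v)=2p\Delta^p v+x^i\partial_i\Delta^p v$ for some $p\geq 1$ and any smooth $v$. Apply $\Delta$ and use the base commutator with $\Delta^p v$ playing the role of $v$:
\[
\Delta^{p+1}(x^i\partial_i v)=2p\Delta^{p+1}v+\Delta(x^i\partial_i\Delta^p v)=2p\Delta^{p+1}v+2\Delta^{p+1}v+x^i\partial_i\Delta^{p+1}v,
\]
which gives the inductive step.

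\smallskip\noindent For the second identity, I would simply apply $\partial_j$ to both sides of the first identity and use $[\partial_j,R]=\partial_j$ (equivalently, $\partial_j(x^i\partial_i f)=\partial_j f+x^i\partial_i\partial_j f$). This produces
\[
\partial_j\Delta^p(x^i\partial_i v)=2p\,\partial_j\Delta^p v+\partial_j\Delta^p v+x^i\partial_i\partial_j\Delta^p v=(2p+1)\partial_j\Delta^p v+x^i\partial_i\partial_j\Delta^p v,
\]
as desired.

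\smallskip\noindent Finally, to justify the unified formulation $\Delta^{l/2}(x^i\partial_i v)=l\,\Delta^{l/2}v+x^i\partial_i(\Delta^{l/2}v)$ for every $l\in\nn$, I would split into the parity of $l$. If $l=2p$ is even, it is the first identity. If $l=2p+1$ is odd, then by the paper's convention $\Delta^{l/2}=\nabla\Delta^p$, so the claim is precisely the component-by-component statement of the second identity applied to $v$. Since the entire argument is algebraic and elementary, I do not expect any real obstacle: the only mild subtlety is to keep the convention $\Delta^{l/2}=\nabla\Delta^{(l-1)/2}$ consistent when $l$ is odd, which the statement itself already highlights.
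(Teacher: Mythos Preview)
Your proof is correct and is precisely the ``straightforward iteration'' the paper alludes to without spelling out: establish the base commutator $[\Delta,R]=2\Delta$, induct on $p$ for the first identity, and differentiate using $[\partial_j,R]=\partial_j$ for the second. There is nothing to add.
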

\begin{proposition}\label{prop:poho} Let $\Omega\subset\rn$ be a smooth bounded domain with $2\leq 2k<n$. Then for all $u\in C^{2k+1}(\rn)$ and $c\in\rr$, we have that
\begin{eqnarray*}
&&\int_\Omega\left(\Delta ^k u-c|u|^{\crit-2}u\right) T(u)\, dx=\int_{\partial\Omega}\left((x,\nu)\left(\frac{|\Delta^\frac{k}{2}u|^2}{2}-\frac{c|u|^{\crit}}{\crit}\right)+S(u)\right)\, d\sigma
\end{eqnarray*}
where $T(u):=\frac{n-2k}{2}u+x^i\partial_i u$ and
\begin{eqnarray}
S(u)&:=&\sum_{i=0}^{E(k/2)-1}\left(-\partial_\nu\Delta ^{k-i-1}u\Delta ^iT(u)+\Delta ^{k-i-1}u\partial_\nu\Delta ^iT(u)\right)\label{def:S}\\
&&-{\bf 1}_{\{k\hbox{ odd}\}}\partial_\nu(\Delta ^\frac{k-1}{2} u)\Delta ^\frac{k-1}{2}T(u)\nonumber
\end{eqnarray}
\end{proposition}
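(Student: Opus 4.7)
The plan is to multiply $\Delta^k u - c|u|^{\crit-2}u$ by $T(u)$, integrate over $\Omega$, and handle the polyharmonic and nonlinear pieces separately; everything eventually reduces to a single residual bulk integral that is controlled by Lemma \ref{lem:deltap}.

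\emph{Nonlinear term.} Using $|u|^{\crit-2}u\,\partial_i u = \frac{1}{\crit}\partial_i|u|^{\crit}$, I would expand $|u|^{\crit-2}u\cdot T(u) = \frac{n-2k}{2}|u|^{\crit} + \frac{1}{\crit}x^i\partial_i|u|^{\crit}$. Integrating the divergence term by parts gives $-\frac{n}{\crit}\int_\Omega |u|^{\crit} + \frac{1}{\crit}\int_{\partial\Omega}(x,\nu)|u|^{\crit}$, and since the critical exponent forces $\frac{n-2k}{2}=\frac{n}{\crit}$, the interior contributions cancel exactly, leaving $\int_\Omega c|u|^{\crit-2}u\,T(u)\,dx = \frac{c}{\crit}\int_{\partial\Omega}(x,\nu)|u|^{\crit}\,d\sigma$.

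\emph{Polyharmonic term.} Starting from $\int_\Omega \Delta^k u\cdot T(u)\,dx$, I would transfer Laplacians from $u$ onto $T(u)$ via the Green-type identity $\int_\Omega(\Delta v)w\,dx = \int_\Omega v\,\Delta w\,dx + \int_{\partial\Omega}(v\partial_\nu w - (\partial_\nu v)w)\,d\sigma$. Applying it iteratively with $v=\Delta^{k-1-i}u$ and $w=\Delta^i T(u)$ for $i=0,\ldots,E(k/2)-1$ yields a residual bulk $\int_\Omega \Delta^{k-E(k/2)}u\cdot\Delta^{E(k/2)}T(u)\,dx$ together with exactly the finite sum appearing in the definition of $S(u)$. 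For $k$ even this residual is already $\int \Delta^{k/2}u\,\Delta^{k/2}T(u)$. For $k$ odd one more half-step, this time via $\int_\Omega(\Delta v)w\,dx = \int_\Omega\nabla v\cdot\nabla w\,dx - \int_{\partial\Omega}(\partial_\nu v)w\,d\sigma$, converts the residual into $\int \nabla\Delta^{(k-1)/2}u\cdot\nabla\Delta^{(k-1)/2}T(u) = \int \Delta^{k/2}u\cdot\Delta^{k/2}T(u)$ (under the convention $\Delta^{k/2}=\nabla\Delta^{(k-1)/2}$ for odd $k$), paying precisely the additional boundary term $-\int_{\partial\Omega}\partial_\nu\Delta^{(k-1)/2}u\cdot\Delta^{(k-1)/2}T(u)$ that accounts for the odd-$k$ correction in $S(u)$.

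\emph{Residual bulk.} This is where Lemma \ref{lem:deltap} enters. Applied componentwise, it gives $\Delta^{k/2}T(u) = \frac{n-2k}{2}\Delta^{k/2}u + k\Delta^{k/2}u + x^i\partial_i\Delta^{k/2}u = \frac{n}{2}\Delta^{k/2}u + x^i\partial_i\Delta^{k/2}u$. Pairing against $\Delta^{k/2}u$, rewriting $\Delta^{k/2}u\cdot x^i\partial_i\Delta^{k/2}u = \frac{1}{2}x^i\partial_i|\Delta^{k/2}u|^2$, and integrating by parts once more with $\partial_i x^i = n$, produces $\int_\Omega \Delta^{k/2}u\cdot\Delta^{k/2}T(u)\,dx = \frac{1}{2}\int_{\partial\Omega}(x,\nu)|\Delta^{k/2}u|^2\,d\sigma$ (the $\frac{n}{2}$ contributions cancel). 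Summing the three computations gives the announced identity.

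The main obstacle is bookkeeping rather than any deep difficulty. The Laplacian-transfer and gradient-transfer versions of integration by parts carry opposite signs on their boundary pieces, and the parity of $k$ dictates whether the extra half-step is required. Checking that the residual bulk, the iterated boundary contributions, and (for odd $k$) the final gradient-transfer term assemble consistently into the precise expression for $S(u)$ — indices, signs, and odd-$k$ correction alike — is where the care lies; the interior calculations themselves are essentially forced once the commutation identity of Lemma \ref{lem:deltap} is in hand.
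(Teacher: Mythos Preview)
Your proof is correct and follows essentially the same route as the paper: treat the nonlinear term by writing it as a divergence (with the critical-exponent cancellation $\frac{n-2k}{2}=\frac{n}{\crit}$), move half of the Laplacians onto $T(u)$ by iterated Green identities (plus one gradient-transfer when $k$ is odd), and then use Lemma \ref{lem:deltap} to turn the residual bulk $\int_\Omega \Delta^{k/2}u\cdot\Delta^{k/2}T(u)$ into the pure boundary term $\tfrac12\int_{\partial\Omega}(x,\nu)|\Delta^{k/2}u|^2$. The only cosmetic difference is that the paper packages the $E(k/2)$ transfers into a single formula \eqref{formula:ipp} with boundary form $\mathcal{B}^{(l)}$, whereas you iterate one Laplacian at a time; the resulting boundary bookkeeping and the handling of the odd-$k$ half-step are identical.
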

\begin{proof} Integrating by parts, for any $l\in\nn$, $l\geq 1$, $U,V\in C^{2l}(\rn)$, we have that
\begin{equation}\label{formula:ipp}
\int_{\Omega }(\Delta^l  U)V \, dX=\int_{\Omega }U(\Delta^l  V)\, dX+\int_{\partial\Omega}\mathcal{B}^{(l)}(U,V)\, d\sigma
\end{equation}
where
\begin{equation}\label{def:B}
\mathcal{B}^{(l)}(U,V):=\sum_{i=0}^{l-1}\left(-\partial_\nu\Delta ^{l-i-1}U\Delta ^i V+\Delta ^{l-i-1}U\partial_\nu\Delta ^iV\right)
\end{equation}
We first assume that $k=2p$ is even, with $p\in\nn$. Using Lemma \ref{lem:deltap}, we get 
\begin{eqnarray*}
&&\int_\Omega\left(\Delta ^k u-c|u|^{\crit-2}u\right) T(u)\, dx=\int_\Omega\Delta ^pu\Delta^p T(u)\, dx+\int_{\partial\Omega}\mathcal{B}^{(p)}(\Delta ^p u,T(u))\, d\sigma\\
&&-\left(\frac{n-2k}{2}\int_\Omega c |u|^{\crit}\, dx+\int_\Omega cx^i\frac{\partial_i |u|^{\crit}}{\crit}\, dx\right)\\
&&=\int_\Omega\Delta^p\left(\frac{n}{2}\Delta^p u+x^i\partial_i(\Delta ^p u)\right)\, dx+\int_{\partial\Omega}\mathcal{B}^{(p)}(\Delta ^p u,T(u))\, d\sigma\\
&&-\left(\frac{n-2k}{2}\int_\Omega c |u|^{\crit}\, dx-\frac{n}{\crit}\int_\Omega  c|u|^{\crit}\, dx+\int_{\partial\Omega}(x,\nu)\frac{c|u|^{\crit}}{\crit}\, dx\right)\\
&&=\int_\Omega\partial_i\left(\frac{x^i(\Delta ^p u)^2}{2}\right)\, dx+\int_{\partial\Omega}\mathcal{B}^{(p)}(\Delta ^p u,T(u))\, d\sigma-  \int_{\partial\Omega}(x,\nu)\frac{c|u|^{\crit}}{\crit}\, dx\\
&&=\int_{\partial\Omega}\left((x,\nu)\left(\frac{(\Delta ^p u)^2}{2}-\frac{c|u|^{\crit}}{\crit}\right)+\mathcal{B}^{(p)}(\Delta ^p u,T(u))\right)\, d\sigma
\end{eqnarray*}
which proves Proposition \ref{prop:poho} when $k$ is even. When $k=2q+1$ is odd, we get that
\begin{eqnarray*}
\int_\Omega \Delta ^k u T(u)\, dx&=& \int_\Omega \Delta ^q(\Delta ^{q+1}u)T(u)\, dx\\
&=& \int_\Omega\Delta ^{q+1}u\Delta ^q T(u)\, dx+\int_{\partial\Omega}\mathcal{B}^{(q)}(\Delta ^{q+1}u,T(u))\, d\sigma\\
&=& \int_\Omega \sum_j\partial_j(\Delta ^q u)\partial_j(\Delta ^q T(u))\, dx\\
&&+\int_{\partial\Omega}\left(\mathcal{B}^{(q)}(\Delta ^{q+1}u,T(u))-\partial_\nu(\Delta ^q u)\Delta ^qT(u)\right)\, d\sigma
\end{eqnarray*}
Using Lemma \ref{lem:deltap}, we get that
\begin{eqnarray*}
&&\int_\Omega \Delta ^k u T(u)\, dx= \int_\Omega \sum_j\partial_j(\Delta ^q u)\left(\frac{n}{2}\partial_j \Delta ^q u+x^i\partial_i\partial_j \Delta ^q u\right)\\
&&+\int_{\partial\Omega}\left(\mathcal{B}^{(q)}(\Delta ^{q+1}u,T(u))-\partial_\nu(\Delta ^q u)\Delta ^qT(u)\right)\, d\sigma\\
&&= \int_\Omega \partial_i \left(x^i\frac{(\partial_j\Delta ^q u)^2}{2}\right)\, dx +\int_{\partial\Omega}\left(\mathcal{B}^{(q)}(\Delta ^{q+1}u,T(u))-\partial_\nu(\Delta ^q u)\Delta ^qT(u)\right)\, d\sigma\\
&&= \int_{\partial\Omega}(x,\nu)\frac{|\nabla\Delta ^q u|^2}{2} \, d\sigma +\int_{\partial\Omega}\left(\mathcal{B}^{(q)}(\Delta ^{q+1}u,T(u))-\partial_\nu(\Delta^q u)\Delta ^qT(u)\right)\, d\sigma\end{eqnarray*}
Using the same computations as in the case when $k$ is even, we get the conclusion of Proposition \ref{prop:poho}.\end{proof}

We fix $\delta\in (0,1)$.  Since $\ul$ solves \eqref{eq:3}, Proposition \ref{prop:poho} yields
\beq
\l \int_{B_\delta(0)}\ul T(\ul)\, dx=\int_{\partial B_\delta(0)}\left((x,\nu)\left(\frac{|\Delta^\frac{k}{2}\ul|^2}{2}-\frac{|\ul|^{\crit}}{\crit}\right)+S(\ul)\right)\, d\sigma\label{eq:34}
\eeq
where $T(\ul)$ and $S(\ul)$ are as in \eqref{def:S}.
\begin{proposition} Let $(\ul)_\l\in C^{2k}(\Bbar)$ be a family of solutions to \eqref{eq:3}, and let $(\nl)_\l$ as in Proposition \ref{lem:nl}. Fix $0<\delta<1$. Then there exists $A\in\rr$ such that
\beq\label{eq:37}
\lim_{\l\to 0}\nl^{2k-n}\int_{\partial B_\delta(0)}\left((x,\nu)\left(\frac{|\Delta^\frac{k}{2}\ul|^2}{2}-\frac{|\ul|^{\crit}}{\crit}\right)+S(\ul)\right)\, d\sigma=A< 0.
\eeq
\end{proposition}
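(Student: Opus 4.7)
The plan is to pass to the limit using the Green's-function asymptotic from \eqref{lim:ul:green}, convert the resulting integral over $\partial B_\delta(0)$ into an integral over $\partial B$ via the Pucci-Serrin identity applied on the annulus $B\setminus \overline{B}_\delta(0)$, and then extract the sign from the Dirichlet boundary conditions that $H$ inherits.

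First I handle the scaling. On $\partial B_\delta(0)$, which is compact and bounded away from the origin, \eqref{lim:ul:green} gives $\nl^{-(n-2k)/2}\ul \to H$ in $C^{2k}$. Hence the quadratic terms satisfy $|\Delta^{k/2}\ul|^2 = \nl^{n-2k}|\Delta^{k/2}H|^2+o(\nl^{n-2k})$ and $S(\ul) = \nl^{n-2k}S(H)+o(\nl^{n-2k})$, while the supercritical term obeys $|\ul|^{\crit}= O(\nl^{n})$. Multiplying by $\nl^{2k-n}$ kills the supercritical contribution (it becomes $O(\nl^{2k}) \to 0$) and yields
$$A \;=\; \int_{\partial B_\delta(0)}\!\Bigl((x,\nu)\tfrac{|\Delta^{k/2}H|^2}{2} + S(H)\Bigr) d\sigma.$$

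To convert this to an integral over $\partial B$, I apply Proposition \ref{prop:poho} to $H$ on $\Omega = B\setminus \overline{B}_\delta(0)$ with $c=0$. Since $\Delta^k H = 0$ on $B\setminus\{0\}$, the left-hand side vanishes. The outward normal to $\Omega$ on $\partial B_\delta(0)$ is the inward normal to $B_\delta(0)$, and the integrand is odd in $\nu$ (both $(x,\nu)$ and each term of $S(H)$ carry exactly one explicit factor of $\nu$ or $\partial_\nu$). Flipping the normal flips the sign, so the two boundary contributions combine into
$$A \;=\; \int_{\partial B}\!\Bigl((x,\nu)\tfrac{|\Delta^{k/2}H|^2}{2} + S(H)\Bigr) d\sigma,$$
with $\nu$ the outward normal to $B$.

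Finally I evaluate this boundary integral. Since $H = \eps_0\bigl(\int_{\rn}U^{\crit-1}\, dx\bigr) G_0(0,\cdot)$, the Dirichlet conditions on the Green's function give $H = \partial_\nu H = \cdots = \partial_\nu^{k-1}H = 0$ on $\partial B$. For $k\geq 2$ this forces $T(H) = 0$ on $\partial B$ (both summands in $T$ vanish, using that on the unit sphere $x\cdot\nabla H = \partial_\nu H$), so every term of $S(H)$ containing $T(H)$ drops out; the remaining terms collapse under the Dirichlet identities to combinations of top-order normal derivatives of $H$. Using the paper's convention that $\Delta$ denotes the positive Laplacian $-\sum_i\partial_i^2$ (so that signs alternate with iterates of $\Delta$), a careful bookkeeping shows that the integrand reduces on $\partial B$ to a strictly negative multiple of a squared top-order normal derivative of $H$ (for instance $-\tfrac12(\partial_\nu H)^2$ when $k=1$ and $-\tfrac12(\partial_\nu^2 H)^2$ when $k=2$). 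Strict negativity of $A$ then follows from the non-vanishing of this top-order derivative, which in turn comes from $G_0(0,\cdot)$ being strictly positive in $B$ by Boggio's formula together with a polyharmonic Hopf-type argument on the sphere.

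The main obstacle I expect is the boundary bookkeeping in the final step: one has to expand $\Delta^j H|_{\partial B}$ and $\partial_\nu\Delta^j T(H)|_{\partial B}$ for every $j$ appearing in $S(H)$, track the sign changes coming from iterates of $\Delta$, handle the parity-dependent last term in the definition of $S$ when $k$ is odd, and verify that after all the Dirichlet cancellations precisely one negative square survives. Everything else—the concentration scaling, Pucci-Serrin on an annulus, and Boggio positivity on the ball—is already available from the quoted results.
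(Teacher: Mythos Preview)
Your strategy is correct and genuinely different from the paper's. After the common first step (passing to the limit via \eqref{lim:ul:green}), the paper shrinks $\delta\to 0$: it writes $H=\alpha(\Gamma+\beta)$ with $\Gamma(x)=|x|^{2k-n}$, shows the pure-$\Gamma$ contribution $D_\delta$ vanishes by homogeneity, and reads off $A=\tfrac{(n-2k)\alpha^2}{2\omega_{n-1}}\beta(0)<0$ from Boggio's formula (which gives $\beta(0)<0$). You instead push $\delta\to 1$: via Proposition~\ref{prop:poho} on the annulus you transport the integral to $\partial B$ and exploit the Dirichlet conditions inherited by $H$. Your route avoids the singular expansion at $0$ and the $D_\delta=0$ computation; the paper's route avoids the boundary bookkeeping you flag. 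Both ultimately rest on Boggio.

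Two remarks on the part you left open. First, the ``careful bookkeeping'' you defer is the entire content of your final step, and you should not leave it as an assertion. Since $H$ is radial, one checks by induction (using Lemma~\ref{lem:deltap} and $H(1)=H'(1)=\cdots=H^{(k-1)}(1)=0$) that on $\partial B$ one has $\Delta^{j}H=\partial_\nu\Delta^{j}H=\Delta^{j}T(H)=\partial_\nu\Delta^{j}T(H)=0$ whenever $2j+1\le k-1$, while the single surviving pairing in $S(H)$ combines with the $(x,\nu)\tfrac12|\Delta^{k/2}H|^2$ term to give exactly $-\tfrac12\bigl(H^{(k)}(1)\bigr)^2$; this is the standard Pucci--Serrin boundary reduction for Dirichlet data and is worth writing out (it is short once organised this way). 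Second, no separate Hopf-type argument is needed for strict negativity: Boggio's formula gives $G_0(x,0)=A_{k,n}|x|^{2k-n}\int_1^{1/|x|}(v^2-1)^{k-1}v^{1-n}\,dv$, and a one-line Taylor expansion at $r=1$ shows the leading behaviour is a nonzero multiple of $(1-r)^k$, so $H^{(k)}(1)\neq 0$ directly.
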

\begin{proof} Setting $\bar{u}_\l:=\nl^{-\frac{n-2k}{2}}\ul$ and using \eqref{lim:ul:green}, we get that
\begin{eqnarray*}
&&\int_{\partial B_\delta(0)}\left((x,\nu)\left(\frac{|\Delta^\frac{k}{2}\ul|^2}{2}-\frac{|\ul|^{\crit}}{\crit}\right)+S(\ul)\right)\, d\sigma\\
&&=\nl^{n-2k}\int_{\partial B_\delta(0)}\left((x,\nu)\left(\frac{|\Delta^\frac{k}{2}\bar{u}_\l|^2}{2}-\nl^{2k}\frac{|\bar{u}_\l|^{\crit}}{\crit}\right)+S(\bar{u}_\l)\right)\, d\sigma\\
&&=\nl^{n-2k}\left(\int_{\partial B_\delta(0)}\left((x,\nu)\left(\frac{|\Delta^\frac{k}{2}H|^2}{2}\right)+S(H)\right)\, d\sigma+o(1)\right)
\end{eqnarray*}
It follows from Boggio's formula \cite{boggio} (see also Lemma 2.27 in \cite{ggs}) that there exists $A_{k,n}>0$ such that
$$G_0(x,0)=A_{k,n}|x|^{2k-n}\int_1^{1/|x|}(v^2-1)^{k-1}v^{1-n}\, dv$$
for all $x\in B\setminus\{0\}$. Therefore, since $\eps_0\int_{\rn}U^{\crit-1}\, dx\neq 0$, there exists $\beta\in C^{2k}(\Bbar)$ and $\alpha\neq 0$ such that
\begin{equation*}
H(x)=\alpha\left(\Gamma(x)+\beta(x)\right)\hbox{ for all }x\in B\setminus \{0\}\hbox{, }\Gamma(x):=|x|^{2k-n}\hbox{ and }\beta(0)<0.
\end{equation*}
We then get that
\beq\label{eq:36}
\lim_{\l\to 0}\nl^{2k-n}\int_{\partial B_\delta(0)}\left((x,\nu)\left(\frac{|\Delta^\frac{k}{2}\ul|^2}{2}-\frac{|\ul|^{\crit}}{\crit}\right)+S(\ul)\right)\, d\sigma=C_\delta
\eeq
$$\hbox{where }C_\delta:=\alpha^2\int_{\partial B_\delta(0)}\left((x,\nu)\left(\frac{|\Delta^\frac{k}{2}(\Gamma+\beta)|^2}{2}\right)+S(\Gamma+\beta)\right)\, d\sigma.$$
Applying Proposition \ref{prop:poho} to $\Gamma+\beta$ on $B_\delta(0)\setminus B_r(0)$ for $0<r<\delta$ with $\eps=0$ and $f\equiv 0$, we get that $C_\delta$ is independent of the choice of $0<\delta<1$. We compute the different terms of $C_\delta$ separately. Using that $\beta$ and all its derivatives are bounded in $\Bbar$, we get that
\begin{eqnarray*}
\int_{\partial B_\delta(0)}(x,\nu)\frac{|\Delta^\frac{k}{2}(\Gamma+\beta)|^2}{2}\, d\sigma
&=& \int_{\partial B_\delta(0)}(x,\nu)\frac{|\Delta^\frac{k}{2}\Gamma|^2}{2}\, d\sigma+O\left( \delta^{k}\right)+O(\delta^{n})
\end{eqnarray*}
We let $S_P$ be the natural bilinear form such that $S(u)=S_P(u,u)$ for all $u$. With the expression \eqref{def:S} of $S$, we get that
\begin{eqnarray*}
S(\Gamma+\beta)&=&S_P(\Gamma,\Gamma)+S_P(\Gamma,\beta)+S_P(\beta,\Gamma)+S_P(\beta,\beta)\\
&=& S(\Gamma)-(\partial_\nu\Delta ^{k-1}\Gamma) T(\beta)+O(|x|^{2-n})
\end{eqnarray*}
With \eqref{APP:GREEN:calc:delta:gamma}, we get that
\begin{equation*}
 S(\Gamma+\beta)=S(\Gamma)+\frac{n-2k}{2\omega_{n-1}}\beta(0)|x|^{1-n}+O(|x|^{2-n})
\end{equation*}
These identities yield
\begin{equation*}
C_\delta=\alpha^2D_\delta +\frac{(n-2k)\alpha^2}{2\omega_{n-1}}\beta(0)+O(\delta)
\end{equation*}
where
$$D_r:=\int_{\partial B_{r}(0)}\left((x,\nu) \frac{|\Delta^\frac{k}{2} \Gamma|^2}{2}  +S( \Gamma)\right)\, d\sigma$$
for all $r>0$. Taking the identity of Proposition \ref{prop:poho} for $c=0$, $u\equiv\Gamma$ so that $\Delta^k u=0$ and $\Omega=B_1(0)-B_r(0)$ for $0<r<1$, we get that $D_r=D_1$ for all $0<r<1$. A quick computation yields  the existence of $D_{k,n}\in\rr$ such that
$$\left(|x| \frac{|\Delta^\frac{k}{2} \Gamma|^2}{2}  +S( \Gamma)\right)=D_{k,n}|x|^{2k+1-2n}\hbox{ for all }x\in \rn\setminus\{0\},$$
so that $D_r=D_{k,n}\omega_{n-1}r^{2k-n}$ for all $0<r<1$. Since this quantity is independent of $r$, we get that $D_{k,n}=0$, so that $D_\delta=0$ for all $\delta>0$ and then
$$C_\delta=\frac{(n-2k)\alpha^2}{2\omega_{n-1}}\beta(0)+O(\delta).$$
Since $C_\delta$ is independent of $\delta$, using \eqref{eq:36}, we get \eqref{eq:37} with $A=\frac{(n-2k)\alpha^2}{2\omega_{n-1}}\beta(0)<0$.\end{proof}

\begin{proposition} Let $(\ul)_\l\in C^{2k}(\Bbar)$ be a family of solutions to \eqref{eq:3}, and let $(\nl)_\l$ as in Proposition \ref{lem:nl}. Fix $0<\delta<1$. Then
\beq\label{eq:40}
\int_{B_\delta(0)}\ul T(\ul)\, dx=O(\nl^{n-2k})\hbox{ if }2k<n<4k.
\eeq
\end{proposition}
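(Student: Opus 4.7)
\medskip\noindent\textbf{Proof sketch.} My plan is to reduce the quadratic expression $\int \ul T(\ul)\, dx$ to a bulk $L^2$-integral plus a boundary $L^2$-integral, and then apply the sharp decay estimates to see that the hypothesis $n<4k$ is exactly what is required to control both.

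First, using $T(\ul) = \frac{n-2k}{2}\ul + x^i\partial_i\ul$ together with the pointwise identity
$$ \ul\, x^i \partial_i \ul = \tfrac{1}{2} x^i \partial_i(\ul^2) = \tfrac{1}{2}\operatorname{div}(x\ul^2) - \tfrac{n}{2}\ul^2,$$
the divergence theorem on $B_\delta(0)$ gives
$$ \int_{B_\delta(0)} \ul T(\ul)\, dx = -k \int_{B_\delta(0)} \ul^2\, dx + \frac{\delta}{2}\int_{\partial B_\delta(0)} \ul^2\, d\sigma.$$
It therefore suffices to show that each of these two terms is $O(\nl^{n-2k})$.

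The boundary term is immediate from \eqref{lim:ul:green}: since $\nl^{-(n-2k)/2}\ul\to H$ in $C^{2k}_{\mathrm{loc}}(\Bbar\setminus\{0\})$, we get $\int_{\partial B_\delta(0)} \ul^2\, d\sigma = O(\nl^{n-2k})$. For the bulk term, I split at the concentration scale $\nl$. On $B_{\nl}(0)$, the bubble convergence \eqref{eq:22} gives $|\ul|\leq C\nl^{-(n-2k)/2}$, so $\int_{B_{\nl}(0)} \ul^2\, dx = O(\nl^{2k})$. On the annulus $B_\delta(0)\setminus B_{\nl}(0)$, the sharp pointwise control \eqref{upp:ul} yields
$$ \int_{B_\delta(0)\setminus B_{\nl}(0)} \ul^2\, dx \leq C\nl^{n-2k}\int_{\nl}^{\delta} r^{4k-n-1}\, dr.$$

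Here the assumption $n<4k$ enters decisively: the exponent $4k-n-1$ is strictly greater than $-1$, so the radial integral is bounded by $C\delta^{4k-n}$, uniformly as $\nl\to 0$. Hence the annular contribution is $O(\nl^{n-2k})$, and since $n<4k$ also gives $n-2k<2k$ (so that $\nl^{2k}=o(\nl^{n-2k})$), the inner piece is absorbed. Combining these estimates yields \eqref{eq:40}. There is no real technical obstacle here; the content of the argument is just the observation that $n<4k$ is precisely the critical-dimension threshold ensuring that the $L^2$-mass of $\ul$ is controlled by the Green-function scale $\nl^{n-2k}$, in line with Jannelli's $L^2$-integrability heuristic.
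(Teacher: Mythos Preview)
Your overall approach---integrate by parts, handle the boundary term via \eqref{lim:ul:green}, split the bulk at radius $\nl$, and use \eqref{upp:ul} on the annulus---is exactly the paper's. There is, however, a genuine gap in your treatment of the inner piece $\int_{B_{\nl}(0)}\ul^2\,dx$. You claim that \eqref{eq:22} yields $|\ul|\le C\nl^{-(n-2k)/2}$ on $B_{\nl}(0)$, but \eqref{eq:22} is convergence in $C^{2k}_{loc}(\rn\setminus\{0\})$, i.e.\ away from the origin in the rescaled variable. The scale $\nl$ in Proposition~\ref{lem:nl} is the \emph{outermost} concentration scale $\mu_{\l,N}$; the analysis explicitly allows further bubbles at smaller scales $\mu_{\l,1}\ll\cdots\ll\mu_{\l,N}=\nl$, so on $B_{\nl}(0)$ the function $|\ul|$ can reach $\mu_{\l,1}^{-(n-2k)/2}\gg \nl^{-(n-2k)/2}$. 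Your pointwise bound is therefore unjustified and in general false.

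The fix is immediate and is what the paper does: use H\"older with the uniform $L^{\crit}$ bound,
\[
\int_{B_{\nl}(0)}\ul^2\,dx\le |B_{\nl}(0)|^{1-\frac{2}{\crit}}\left(\int_{B_{\nl}(0)}|\ul|^{\crit}\,dx\right)^{\frac{2}{\crit}}\le C\,\nl^{\,n(1-2/\crit)}M^2=C\,\nl^{2k}.
\]
Everything else in your sketch is correct and coincides with the paper's argument.
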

\begin{proof} Integrating by parts, we get that
\begin{eqnarray*}
\int_{B_\delta(0)}\ul T(\ul)\, dx&=&\int_{B_\delta(0)}\ul \left(\frac{n-2k}{2}\ul+x^i\partial_i\ul\right)\, dx\\
&=&-k\int_{B_\delta(0)}\ul^2\, dx+\int_{\partial B_\delta(0)}(x,\nu)\ul^2\, d\sigma
\end{eqnarray*}
With H\"older's inequality and the pointwise control \eqref{upp:ul}, using that $n<4k$, we get
\begin{eqnarray*}
\int_{B_\delta(0)}\ul^2\, dx&\leq & \int_{B_{\nl}(0)}\ul^2\, dx +\int_{B_\delta(0)\setminus B_{\nl}(0)}\nl^{n-2k}|x|^{2(2k-n)}\, dx\\
&\leq & \left(\int_{B_{\nl}(0)}\, dx\right)^{\frac{\crit-2}{\crit}} \left(\int_{B_{\nl}(0)}\ul^{\crit}\, dx\right)^{\frac{2}{\crit}}+\int_{B_\delta(0)\setminus B_{\nl}(0)}\nl^{n-2k}|x|^{2(2k-n)}\, dx\\
&\leq & C\nl^{2k}+\int_{B_\delta(0)\setminus B_{\nl}(0)}\nl^{n-2k}|x|^{2(2k-n)}\, dx\leq C\nl^{n-2k}
\end{eqnarray*}
since $n<4k$. The result then follows from these estimates and \eqref{lim:ul:green}.\end{proof}

\medskip\noindent{\bf Conclusion of the argument and proof of Theorem \ref{th:1}.} Putting \eqref{eq:37} and \eqref{eq:40} into the identity \eqref{eq:34}, we get that $o(\nl^{n-2k})=(A+o(1))\nl^{n-2k}$ as $\l\to 0$, which contradicts $A\neq 0$.

\section{Green's function for an "almost" Hardy operator}
Let $\Omega$ be a smooth domain of $\rn$ and let $k\in\nn$ be such that $2\leq 2k<n$. Given $h\in L^\infty(\O)$, we consider operators like $P=\Delta^k+h$. Integrating by parts yields $\int_\Omega u P u\, dx=\int_\Omega \left((\Delta^{\frac{k}{2}}u)^2+h u^2\right)\, dx$ for all $u\in C^\infty_c(\O)$, so that this expression  makes sense for $u\in \hundeux$. We say that $P$ is coercive if there exists $c>0$ such that $\int_\O uPu\, dx\geq c\Vert u\Vert_{H_k^2}^2$ for all $u\in\hundeux$. We prove the following theorems:
\begin{theorem}\label{APP:GREEN:th:Green:main} Let $\Omega$ be a smooth domain of $\rn$ such that $0\in\Omega$ is an interior point. Fix $k\in\nn$ such that $2\leq 2k<n$. We consider an operator $P=\Delta^k+h$, where $h\in L^\infty(\O)$ and $P$ is coercive. We let $V\in L^1(\Omega)$ such that for some $\mu>0$, $|x|^{2k}|V(x)|\leq \mu\hbox{ for all }x\in \Omega$. 

\smallskip\noindent Then there is $\mu_0(P,h)>0$ such that for $0<\mu<\mu_0(P,h)$, there exists $G:(\Mmx)\times (\Mmx)\setminus\{(z,z)/z\in \Mmx\}\to \rr$ such that:
\begin{itemize}
\item For all $x\in \Mmx$, $G(x,\cdot)\in L^q(\Omega)$ for all $1\leq q<\frac{n}{n-2k}$
\item For all $x\in \Mmx$, $G(x,\cdot)\in L^{\frac{2n}{n-2k}}_{loc}(\O\setminus\{x\})$
\item For all $f\in L^{\frac{2n}{n+2k}}(\Omega)\cap L^p_{loc}(\O\setminus\{0\})$, $p>\frac{n}{2k}$, we let $\varphi\in \hundeux$ such that $P\varphi=f$ in the weak sense. Then $\varphi\in C^0(\Obar\setminus\{0\})$ and
$$\varphi(x)=\int_\Omega  G(x,\cdot) f\, dy\hbox{ for all }x\in \Mmx.$$
\end{itemize}
Moreover, such a function $G$ is unique. It is the Green's function for $P-V$. In addition, $G$ is symmetric and for all $x\in \Mmx$, $$G(x,\cdot)\in H_{2k,loc}^p(\Omega\setminus\{0,x\})\cap H_{k,0,loc}^2(\O\setminus\{x\})\cap C^{2k-1}(\Obar\setminus\{0,x\})$$
for all $1<p<\infty$ and
\begin{equation*}
\left\{\begin{array}{cc}
(P-V)G(x,\cdot)=0&\hbox{ in }\O\setminus\{0,x\}\, \\
{\partial_{\nu}^iG(x,\cdot)}_{|\partial\O}=0&\hbox{ for }i=0,...,k-1.
\end{array}\right\}
\end{equation*}
\end{theorem}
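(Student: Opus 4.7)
The strategy has three parts: establishing coercivity of $P-V$, proving an a priori pointwise bound for weak solutions of $(P-V)\varphi=f$ at points $x_0\ne 0$, and constructing $G$ by duality. For coercivity, I would invoke the Hardy--Rellich inequality for the polyharmonic operator,
\begin{equation*}
\int_\Omega \frac{u^2}{|x|^{2k}}\, dx\leq C_{n,k}\int_\Omega (\Delta^{k/2}u)^2\, dx\qquad\text{for all }u\in\hundeux,
\end{equation*}
which combined with $|V(x)|\leq\mu|x|^{-2k}$ yields $\bigl|\int_\Omega V u^2\, dx\bigr|\leq \mu C_{n,k}\|\Delta^{k/2}u\|_2^2$. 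Hence $P-V$ is coercive on $\hundeux$ as soon as $\mu<\mu_0(P,h)$, the threshold being determined by the coercivity constant of $P$ and by $C_{n,k}$. Lax--Milgram then provides, for every admissible $f$, a unique weak solution $\varphi\in\hundeux$ of $(P-V)\varphi=f$ satisfying $\|\varphi\|_{H^2_k}\leq C\|f\|_{L^{2n/(n+2k)}}$.

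Next, fix $x_0\in\Mmx$. I would prove that for every $q'>n/(2k)$ there exists $C(x_0,q')>0$ such that
\begin{equation*}
|\varphi(x_0)|\leq C(x_0,q')\bigl(\|f\|_{L^{q'}(\Omega)}+\|f\|_{L^{2n/(n+2k)}(\Omega)}\bigr)
\end{equation*}
for all $f$ in the admissible class also lying in $L^{q'}(\Omega)$. Away from $0$, rewriting the equation as $P\varphi=f+V\varphi$ with $V\varphi$ locally in $L^{\crit}$ (via the Sobolev embedding $\hundeux\hookrightarrow L^{\crit}$) and iterating the ADN regularity Theorems \ref{APP:GREEN:th:2:again}--\ref{APP:GREEN:th:2:holder} promotes $\varphi$ to $H^{q'}_{2k,loc}(\overline{\Omega}\setminus\{0\})$, hence to $C^0(\overline{\Omega}\setminus\{0\})$ by Sobolev embedding since $q'>n/(2k)$, with the quantitative bound above. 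The only but fundamental difficulty is that the constant $C(x_0,q')$ could a priori degenerate as $x_0\to 0$: this is Hardy-critical behaviour, since $V$ has the same scale as $\Delta^k$ and no naive iteration closes. This is precisely the content of the regularity Lemma \ref{APP:GREEN:lem:main}, designed to handle the ``almost Hardy'' potential and whose proof crucially uses the smallness of $\mu$; I expect this to be the main obstacle.

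For the construction itself, fix $x_0\in\Mmx$ and $q<n/(n-2k)$ with conjugate $q'>n/(2k)$. The linear functional $\Lambda_{x_0}\colon f\mapsto\varphi(x_0)$ is bounded on $L^{q'}(\Omega)\cap L^{2n/(n+2k)}(\Omega)$ by the previous step; Riesz representation plus a diagonal extraction over $q_j\nearrow n/(n-2k)$ then produces a single $G(x_0,\cdot)\in\bigcap_{1\le q<n/(n-2k)} L^q(\Omega)$ with $\varphi(x_0)=\int_\Omega G(x_0,\cdot)f\,dy$ for every admissible $f$, and uniqueness follows from the density of the admissible class. Testing against $f\in C^\infty_c(\Omega\setminus\{0,x_0\})$ yields $(P-V)G(x_0,\cdot)=0$ in $\Omega\setminus\{0,x_0\}$, and ADN bootstrap then gives $G(x_0,\cdot)\in H^p_{2k,loc}(\Omega\setminus\{0,x_0\})\cap C^{2k-1}(\overline{\Omega}\setminus\{0,x_0\})$ for all $1<p<\infty$. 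Cutoffs $\eta G(x_0,\cdot)$ with $\eta\in C^\infty_c(\Omega\setminus\{x_0\})$ lie in $\hundeux$ by the coercivity inequality applied to the corresponding approximate Green's functions obtained from mollified sources $\delta^\epsilon_{x_0}$; this simultaneously delivers the Dirichlet boundary conditions and, through Sobolev embedding, the $L^{\crit}_{loc}(\Omega\setminus\{x_0\})$ integrability. Symmetry $G(x,y)=G(y,x)$ finally follows from the formal self-adjointness of $P-V$ by applying Green's formula to pairs of approximate identities $\delta^\epsilon_x,\delta^\epsilon_y$ and passing to the limit $\epsilon\to 0$.
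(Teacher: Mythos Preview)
Your overall architecture (Hardy--Rellich for coercivity, an a priori pointwise bound at $x_0\neq 0$, then duality to produce $G(x_0,\cdot)\in L^q$) matches the paper's, and the $L^q$ and $L^{\crit}_{loc}$ bounds are obtained there by exactly the duality you describe. The paper, however, does \emph{not} work directly with the singular $V$. It first truncates $V$ near $0$ to bounded potentials $V_\eps\in L^\infty(\Omega)$ (still in $\Pl$, still uniformly coercive), invokes the classical Green's function $G_\eps$ of $P-V_\eps$ (Theorem~\ref{APP:GREEN:th:green:nonsing}, where the full package---boundary conditions, $H^2_{k,0,loc}(\Omega\setminus\{x\})$, $C^{2k-1}(\overline\Omega\setminus\{x\})$---is already available), proves the duality bounds uniformly in $\eps$, and then passes to the limit $\eps\to 0$ via ADN estimates and Ascoli. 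The regularization of the potential is the device you are missing: it is what delivers the Dirichlet boundary conditions and the $H^2_{k,0,loc}(\Omega\setminus\{x\})$ membership essentially for free, by inheritance from $G_\eps$.

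Your substitute for this, the ``mollified sources $\delta_{x_0}^\epsilon$'', is the weak point. The solutions $\varphi^\epsilon\in H^2_{k,0}(\Omega)$ have $\|\varphi^\epsilon\|_{H^2_k}\asymp\epsilon^{(2k-n)/2}\to\infty$, so to extract uniform $H^2_k$ control on $\eta\varphi^\epsilon$ for a cutoff $\eta$ away from $x_0$ you would need a Caccioppoli-type estimate for $P-V$; the commutator $[\Delta^k,\eta]$ has order $2k-1$ and you only start with $k$ derivatives in $L^2$, so a naive one-step argument does not close. This can probably be pushed through, but it is real work that the paper's approximation of $V$ sidesteps entirely. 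Likewise, recovering the boundary conditions directly from the Riesz-constructed $G$ runs into a circularity (ADN boundary regularity presupposes knowing the boundary data), whereas in the paper $\partial_\nu^i G_\eps\equiv 0$ on $\partial\Omega$ passes straight to the limit in $C^{2k-1}_{loc}(\overline\Omega\setminus\{0,x\})$.

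One conceptual correction: Lemma~\ref{APP:GREEN:lem:main} is \emph{not} used in the proof of Theorem~\ref{APP:GREEN:th:Green:main}. For a fixed $x_0\neq 0$ the constant $C(x_0,q')$ in your pointwise bound is allowed to blow up as $x_0\to 0$; existence of $G(x_0,\cdot)$ requires nothing more. The regularity lemma enters only later, in the proof of the quantitative pointwise estimate Theorem~\ref{APP:GREEN:th:Green:pointwise:BIS}, where one genuinely needs to track the behaviour of $G(x,y)$ as $x$ or $y$ approaches the singularity.
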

In addition, we get the following pointwise control:
\begin{theorem}\label{APP:GREEN:th:Green:pointwise:BIS} Let $\Omega$ be a smooth domain of $\rn$ such that $0\in\Omega$ is an interior point. Fix $k\in\nn$ such that $2\leq 2k<n$, $L>0$ and $\mu>0$. We consider an operator $P=\Delta^k+h$, where $h\in L^\infty(\O)$, $\Vert h\Vert_{L^\infty}\leq L$ and $\int_\Omega uPu\,dx\geq L^{-1}\Vert u\Vert_{H_k^2}^2$ for all $u\in\hundeux$. We let $V\in L^1(\Omega)$ such that $P-V$ is coercive and
$$|x|^{2k}|V(x)|\leq \mu\hbox{ for all }x\in \Omega.$$
We let $G$ be the Green's function of $P-V$ as in Theorem \ref{APP:GREEN:th:Green:main}. 

\smallskip\noindent Then for any $\gamma\in (0,n-2k)$, there exists $\mu_\gamma>0$ such that for $\mu<\mu_\gamma$, for any $\omega\subset\subset\O$, for any $x\in\omega\setminus\{0\}$, $y\in \Mmx$ such that $x\neq y$, we have that 
\begin{itemize}
\item $$|G(x,y)|\leq C(\O,\gamma,L,\mu, k,\omega)\left(\frac{\max\{|x|,|y|\}}{\min\{|x|,|y|\}}\right)^\gamma |x-y|^{2k-n}$$
\item If $|x|<|y|$ and $l\leq 2k-1$, we have that
$$|\nabla_y^lG(x,y)|\leq C(\O,\gamma,L,\mu, k,l,\omega)\left(\frac{\max\{|x|,|y|\}}{\min\{|x|,|y|\}}\right)^\gamma |x-y|^{2k-n-l}$$
\item If $|y|<|x|$ and $l\leq 2k-1$, we have that
\beq\label{est:green:1}|\nabla_y^lG(x,y)|\leq C(\O,\gamma,L,\mu, k,l,\omega)\left(\frac{\max\{|x|,|y|\}}{\min\{|x|,|y|\}}\right)^{\gamma+l} |x-y|^{2k-n-l}
\eeq
\end{itemize}
where $C(\O,\gamma,L,\mu, k,l,\omega)$ depends only on $\Omega$, $\gamma$, $L$, $\mu$, $k,l$ and $\omega$. \end{theorem}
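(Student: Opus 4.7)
The strategy is to combine the regularity Lemma \ref{APP:GREEN:lem:main} for solutions of $(P-V)u=0$ near the origin with interior elliptic estimates away from $0$, using the symmetry $G(x,y)=G(y,x)$ that follows from $P-V$ being self-adjoint. Fix $x\in\omega\setminus\{0\}$; by Theorem \ref{APP:GREEN:th:Green:main}, $y\mapsto G(x,y)$ satisfies $(P-V)G(x,\cdot)=0$ in $\Omega\setminus\{0,x\}$ with Dirichlet data on $\partial\Omega$. I split the analysis in $y$ into three regions according to the ratio $|y|/|x|$: (I) $\tfrac{1}{2}|x|\leq|y|\leq 2|x|$, (II) $|y|<\tfrac{1}{2}|x|$, and (III) $|y|>2|x|$. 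In region (I), $V$ is bounded by $C\mu|x|^{-2k}$, and the rescaling $y=|x|z$ reduces the question, on the unit scale, to a standard Green's function bound for a smooth coercive operator of order $2k$, yielding $|G(x,y)|\leq C|x-y|^{2k-n}$; since $\max/\min\leq 2$ here, this is already the claimed bound.

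Region (II) is the heart of the matter. Here $|x-y|\asymp|x|$, and I apply Lemma \ref{APP:GREEN:lem:main} to $y\mapsto G(x,y)$ on the ball $B_{|x|/2}(0)$, using as baseline the bound $|G(x,y)|\leq C|x|^{2k-n}$ on $\partial B_{|x|/2}(0)$ furnished by region (I), together with the local $H_{k,0,loc}^2$-membership provided by Theorem \ref{APP:GREEN:th:Green:main}. For $\mu<\mu_\gamma$ the lemma upgrades this baseline into the weighted pointwise decay
\begin{equation*}
|G(x,y)|\leq C|x|^{2k-n}\left(\frac{|x|}{|y|}\right)^{\gamma}\asymp\left(\frac{|x|}{|y|}\right)^{\gamma}|x-y|^{2k-n},
\end{equation*}
which is the desired estimate. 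Region (III) is then reduced to (II) by the symmetry $G(x,y)=G(y,x)$, swapping the roles of $x$ and $y$.

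For the derivative bounds, set $r:=\tfrac{1}{2}\min\{|x-y|,|y|\}$. On $B_r(y)$ the function $G(x,\cdot)$ solves $(P-V)G(x,\cdot)=0$ with coefficients bounded by $C\mu r^{-2k}$, so rescaling $w=y+rz$ and applying the interior $C^{2k-1}$ estimates of Theorems \ref{APP:GREEN:th:2:again} and \ref{APP:GREEN:th:2:holder} on the unit ball yields $r^{l}|\nabla_y^l G(x,y)|\leq C\sup_{B_r(y)}|G(x,\cdot)|$. When $|y|>|x|$ one has $r\asymp|x-y|$, and plugging in the sup bound from the previous paragraph gives $(|y|/|x|)^{\gamma}|x-y|^{2k-n-l}$. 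When $|y|<|x|$ one has $r\asymp|y|$ but $|x-y|\asymp|x|$, so the loss $r^{-l}=|y|^{-l}$ combined with the weighted sup bound on $G$ produces the additional factor $(|x|/|y|)^{l}$ appearing in \eqref{est:green:1}.

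The main obstacle is the invocation of Lemma \ref{APP:GREEN:lem:main} in region (II): it must convert the baseline sup-norm bound at scale $|x|$ into the sharp weighted decay $(|x|/|y|)^{\gamma}$ as $|y|\to 0$, with the exponent $\gamma$ available for any $\gamma\in(0,n-2k)$ provided $\mu$ lies below the Hardy-type threshold $\mu_\gamma$. This is the genuine Hardy-inequality input of the argument: $\mu_\gamma$ corresponds to the largest almost-Hardy perturbation below which the operator $\Delta^k-\mu|x|^{-2k}$ admits fundamental solutions with the prescribed subcritical decay at $0$. Everything else---the rescaling in region (I), the symmetry reduction for region (III), and the derivative bootstrap via interior elliptic regularity---is routine once the Hardy-weighted pointwise control on $G$ is in hand.
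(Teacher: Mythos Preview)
Your three–region decomposition, the use of symmetry for region (III), and the derivative bootstrap via interior rescaled elliptic estimates are all correct and appear in the paper. The gap is in how you obtain the \emph{baseline} bounds in regions (I) and (II).

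In region (I) you say the rescaling $y=|x|z$ ``reduces the question to a standard Green's function bound for a smooth coercive operator.'' But after rescaling, the domain is $\Omega/|x|$, which exhausts $\rn$ as $|x|\to 0$, and the rescaled potential $\alpha^{2k}V(\alpha\cdot)$ is still $\mu|\cdot|^{-2k}$ globally. So this is not a standard Green's function on a fixed domain with bounded coefficients; you still need a scale–uniform input to control $|x|^{n-2k}G(|x|X,|x|Y)$ on the unit annulus.

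In region (II) the issue is sharper. Lemma~\ref{APP:GREEN:lem:main} requires an $L^p$ bound of $G(x,\cdot)$ over the \emph{full} ball $B_{|x|/2}(0)$, with the correct power of $|x|$; the lemma's conclusion is $|y|^\gamma|G(x,y)|\le C_0\Vert G(x,\cdot)\Vert_{L^p(B_{|x|/2})}$, and nothing in your outline controls the right–hand side uniformly as $|x|\to 0$. Your ``baseline'' from region (I) only gives the sup of $G(x,\cdot)$ on the annulus $\{|y|\asymp|x|\}$, not an integral bound over the interior of the small ball, and the qualitative $H^2_{k,0,loc}$ membership from Theorem~\ref{APP:GREEN:th:Green:main} carries no quantitative scale–invariant information.

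The paper closes exactly this gap by a duality argument (Theorems~\ref{APP:GREEN:th:G:close} and~\ref{APP:GREEN:th:G:far}). One does not bound $G$ directly: instead, for $f\in C^\infty_c(\mathcal U')$ with $\mathcal U'\subset\subset\rn\setminus\{0\}$, one solves $(P-V)\varphi_\alpha=f_\alpha$ with $f_\alpha(x)=\alpha^{-(n+2k)/2}f(x/\alpha)$, uses \emph{global} coercivity on the fixed domain $\Omega$ and Sobolev to get the scale–invariant bound $\Vert\varphi_\alpha\Vert_{\crit}\le C\Vert f\Vert_{(\crit)'}$, and only then applies Lemma~\ref{APP:GREEN:lem:main} to the rescaled solution $\tilde\varphi_\alpha(X)=\alpha^{(n-2k)/2}\varphi_\alpha(\alpha X)$. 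This yields a pointwise bound $|X|^\gamma|\tilde\varphi_\alpha(X)|\le C\Vert f\Vert_{(\crit)'}$, which by the representation formula and duality becomes an $L^{\crit}$ bound on the rescaled kernel $\alpha^{n-2k}G(\alpha X,\alpha\cdot)$; elliptic regularity on $\mathcal U\subset\subset\rn\setminus\{0\}$ then gives the pointwise control. The final assembly is done by contradiction through five cases on the relative sizes of $|x_i|$, $|y_i|$, $|x_i-y_i|$. Once you have these scale–uniform auxiliary theorems in hand, your derivative argument with $r=\tfrac12\min\{|x-y|,|y|\}$ is exactly right.
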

\subsection{Construction of the Green's function}

\noindent{\bf Preliminary notations:} In addition to the Sobolev inequality \eqref{sobo:ineq:rn}, we will make a regular use of the Hardy inequality on $\rn$ (see Theorem 3.3 in Mitidieri \cite{mitidieri}): there exists $C_H(n,k)>0$ such that
\begin{equation}\label{APP:GREEN:hardy:ineq:intro}
\int_{\rn}\frac{\varphi^2}{|X|^{2k}}\, dX\leq C_H(n,k)\int_{\rn}(\Delta^{\frac{k}{2}}\varphi)^2\, dX\hbox{ for all }\varphi\in \dkdeux.
\end{equation}
For $\mu>0$, we define
\begin{equation*}
\Pl:=\left\{\begin{array}{cc}
V\in L^1(\Omega)\hbox{ such that}\\
|V(x)|\leq \mu |x|^{-2k}\hbox{ for all }x\in \Mmx\end{array}\right\}.
\end{equation*}
In the sequel,  we consider an operator $P=\Delta^k+h$, where $h\in L^\infty(\O)$ is such that
\begin{equation}\label{APP:GREEN:def:okl}
\Vert h\Vert_{L^\infty}\leq L\hbox{ and } \int_\Omega uPu\,dx\geq L^{-1}\Vert u\Vert_{H_k^2}^2\hbox{ for all }u\in\hundeux.
\end{equation}
\medskip\noindent{\bf Step 0: Approximation of the potential.} 
We claim that there exists $\mu_0=\mu_0(k, L)$ such for all $V_0\in \Pl $ with $0<\mu<\mu_0$, then 
\begin{equation}\label{APP:GREEN:coer:PV:0}
\int_\Omega (Pu-V_0 u)u\, dx\geq \frac{1}{2L}\Vert u\Vert_{H_k^2}^2\hbox{ for all }u\in \hundeux
\end{equation}
and there exists a family $(V_\eps)_{\eps>0} \in L^\infty(\Omega)$ such that:
\begin{equation}\label{APP:GREEN:hyp:Ve}
\left\{\begin{array}{cc}
\lim_{\eps\to 0}V_\eps(x)=V_0(x)&\hbox{ for a.e. }x\in \Mmx\\
V_\eps\in \Pl&\hbox{ for all }\eps>0\\
P-V_\eps&\hbox{ is uniformly coercive for all }\eps>0
\end{array}\right\}
\end{equation}
in the sense that
\begin{equation}\label{APP:GREEN:coer:PV}
\int_M (Pu-V_\eps u)u\, \geq \frac{1}{2L}\Vert u\Vert_{H_k^2}^2\hbox{ for all }u\in \hundeux\hbox{ and }\eps>0
\end{equation}

\smallskip\noindent We prove the claim. The coercivity of $P$ and the Hardy inequality \eqref{APP:GREEN:hardy:ineq:intro} yield
$$\int_\Omega u(P-V_0)u\, dx\geq \frac{1}{L}\Vert u\Vert_{H_k^2}^2-\mu \int_\Omega \frac{u^2}{|x|^{2k}}\, dx\geq \left(\frac{1}{L} -\mu C_H(n,k)\right)\Vert u\Vert_{H_k^2}^2$$
for all $u\in \hundeux$. For $\eta\in C^\infty(\rr)$ such that $\eta(t)=0$ for $t\leq 1$ and $\eta(t)=1$ for $t\geq 2$, define $V_\eps(x):=\eta(|x|/\eps) V_0(x)$ for all $\eps>0$ and a.e. $x\in \Omega$. As one checks, the claim holds with $0<\mu_0<(2C_H(n,k)L)^{-1}$. This proves the claim.

\medskip\noindent For any $\eps>0$,  we let $G_\eps$ be the Green's function for the operator $P-V_\eps$. Since $V_\eps\in L^\infty(\Omega)$, the existence of $G_\eps$ follows from Theorem \ref{APP:GREEN:th:green:nonsing} of the Appendix \ref{APP:GREEN:app:green}.

\medskip\noindent{\bf Step 1: Integral bounds.} We choose $f\in C^{0}_c(\O)$ and we fix $\eps>0$. Since $P-V_\eps$ is coercive, it follows from variational methods that there exists a unique function $\varphi_\eps\in \hundeux$ such that 
\begin{equation*}
\left\{\begin{array}{cc}
(P-V_\eps)\varphi_\eps=f&\hbox{ in }\O\, \\
{\partial_{\nu}^i\varphi_\eps}_{|\partial\O}=0&\hbox{ for }i=0,...,k-1\\
\end{array}\right\}\hbox{ in the weak sense.}
\end{equation*}
It follows from Theorem \ref{APP:GREEN:th:3} and Sobolev's embedding theorem that $\varphi_\eps\in C^{2k-1}(\Obar)$ and $\varphi_\eps\in H_{2k}^p(\O)$ for all $p>1$. The coercivity hypothesis \eqref{APP:GREEN:coer:PV} yields
\begin{eqnarray*}
\frac{1}{2L}\Vert \varphi_\eps\Vert_{H_k^2}^2\leq \int_\O (P\varphi_\eps-V_\eps\varphi_\eps)\varphi_\eps\, dx=\int_\O f\varphi_\eps\, dx\leq \Vert f\Vert_{\frac{2n}{n+2k}}\Vert\varphi_\eps\Vert_{\frac{2n}{n-2k}}
\end{eqnarray*}
With inequality \eqref{sobo:ineq:rn}, we get that
\begin{equation}\label{APP:GREEN:eq:14}
K(n,k)^{-1}\Vert\varphi_\eps\Vert_{\crit}\leq \Vert\varphi_\eps \Vert_{H_k^2}\leq 2LK(n,k)\Vert f\Vert_{\frac{2n}{n+2k}}
\end{equation}
for all $f\in C^{0}_c(\Omega)$. We fix $p>1$ such that
$$\frac{n}{2k}<p<\frac{n}{2k-1}\hbox{ and }\theta_p:=2k-\frac{n}{p}\in (0,1).$$
We fix $\delta\in (0,d(0,\partial\O)/4)$. Since $V_\eps\in \Pl$ for all $\eps>0$ and $P$ satisfies \eqref{APP:GREEN:def:okl}, it follows from regularity theory, see Theorem \ref{APP:GREEN:th:2:again} of Appendix \ref{APP:GREEN:sec:regul:adn}) and Sobolev's embedding theorem that
\begin{eqnarray*}
\Vert\varphi_\eps\Vert_{C^{0,\theta_p}(\Obar\setminus B_\delta(0))}&\leq & C(p,\delta,k)\Vert \varphi_\eps\Vert_{H_{2k}^p(\Omega\setminus B_\delta(0))}\\
&\leq & C(p,\delta,k, L,\mu_0) \left(\Vert f\Vert_{L^p(\Omega\setminus B_{\delta/2}(0))}+\Vert \varphi_\eps\Vert_{L^{\crit}(\Omega\setminus B_{\delta/2}(0))}\right).
\end{eqnarray*}
With \eqref{APP:GREEN:eq:14} and noting that $\frac{n}{2k}>\frac{2n}{n+2k}$, we get that
\begin{equation*}
\Vert\varphi_\eps\Vert_{C^{0,\theta_p}(\Obar\setminus B_\delta(0))}\leq  C(p,\delta,k, L,\mu_0) 
\Vert f\Vert_{L^p(\O)}.
\end{equation*}
Since $\varphi_\eps\in H_{2k}^p(\O)$ for all $p>1$, for any $x\in \Mmx$, Green's representation formula (see Theorem \ref{APP:GREEN:th:green:nonsing}) yields
$$\varphi_\eps(x)=\int_\O G_\eps(x,y)f(y)\, dy\hbox{ for all }x\in \Mmx,$$
and then when $|x|>\delta$, we get
$$\left|\int_\O G_\eps(x,y)f(y)\, dy\right|\leq C(p,\delta,k, L,\mu) 
\Vert f\Vert_{L^p(\O)}$$
for all $f\in C^{0}_c(\O)$ and $p\in \left(\frac{n}{2k},\frac{n}{2k-1}\right)$. Via duality, we then deduce that 
\begin{equation}\label{APP:GREEN:eq:16}
\Vert G_\eps(x,\cdot)\Vert_{L^q(\O)}\leq C(q,\delta, k, L,\mu_0)\hbox{ for all }q\in \left(1,\frac{n}{n-2k}\right)\hbox{ and }|x|>\delta. 
\end{equation}
We now fix $x\in \O$ such that $|x|>\delta$. We take $f\in C^0_c(\O)$ such that $f\equiv 0$ in $B_{\delta/2}(x)$, so that $(P-V_\eps)\varphi_\eps=0$ in $B_{\delta/2}(x)$. Since $V_\eps\in \Pl$ for all $\eps>0$ and $P$ satisfies \eqref{APP:GREEN:def:okl}, it follows from regularity theory (Theorem \ref{APP:GREEN:th:2:again} of Appendix \ref{APP:GREEN:sec:regul:adn}) and Sobolev's embedding theorem that for any $p>\frac{n}{2k}$,
\begin{eqnarray*}
\Vert\varphi_\eps\Vert_{C^{0,\theta_p}(\O\cap B_{\delta/4}(x))}&\leq & C(p,\delta,k)\Vert \varphi_\eps\Vert_{H_{2k}^p(\O\cap B_{\delta/4}(x))}\\
&\leq & C(p,\delta,k, L,\mu_0) \Vert \varphi_\eps\Vert_{L^{\crit}(\O\cap B_{\delta/2}(x))}.
\end{eqnarray*}
With \eqref{APP:GREEN:eq:14}, we get that
\begin{equation*}
\Vert\varphi_\eps\Vert_{C^{0,\theta_p}(  \O\cap B_{\delta/4}(x))}\leq  C(p,\delta,k, L,\mu_0) 
\Vert f\Vert_{L^{\frac{2n}{n+2k}}(\O)}.
\end{equation*}
Since $\varphi_\eps\in H_{2k}^p(\O)$ for all $p>1$ and $\varphi_\eps\in C^{2k-1}(\Obar)\cap H_{k,0}^2(\O)$, Green's representation formula (see Theorem \ref{APP:GREEN:th:green:nonsing}) yields
$$\varphi_\eps(x)=\int_\O G_\eps(x,y)f(y)\, dy,$$
and then 
$$\left|\int_\O G_\eps(x,y)f(y)\, dy\right|\leq C(p,\delta,k, L,\mu) 
\Vert f\Vert_{L^{\frac{2n}{n+2k}}(\O)}$$
for all $f\in C^{0}_c(\O)$ vanishing in $B_{\delta/2}(x)$. Via duality, we then deduce that 
\begin{equation}\label{APP:GREEN:eq:16:crit}
\Vert G_\eps(x,\cdot)\Vert_{L^{\crit}(\O\setminus B_{\delta/2}(x))}\leq C(\delta, k, L,\mu_0)\hbox{ when }|x|>\delta. 
\end{equation}

\medskip\noindent{\bf Step 2: passing to the limit $\eps\to 0$ and Green's function for $P-V_0$.}\par
\noindent We fix $\delta>0$ and $x\in \O$ such that $|x|>\delta$. For all $\eps>0$, we have that 
\begin{equation}\label{APP:GREEN:eq:Ge}
\left\{\begin{array}{cc}
P G_\eps(x,\cdot)-V_\eps G_\eps(x,\cdot)=0&\hbox{ in }\O\setminus\{x\} \\
{\partial_{\nu}^iG_\eps(x,\cdot)}_{|\partial\O}=0&\hbox{ for }i=0,...,k-1\\
\end{array}\right\}
\end{equation}
Since $V_\eps\in \Pl$ for all $\eps>0$, we have that $|V_\eps(y)|\leq C(\mu, \delta)$ for all $y\in \O\setminus B_{\delta/2}(0)$ and $\eps>0$. Since $P$ satisfies \eqref{APP:GREEN:def:okl} and $G_\eps(x,\cdot)\in H_{2k,loc}^p(\O\setminus\{0,x\})\cap H_{k,0,loc}^2(\O\setminus\{x\})$, it follows from the control \eqref{APP:GREEN:eq:16} and standard regularity theory (see Theorem \ref{APP:GREEN:th:2:again}) that given $\nu\in (0,1)$, we have that for any $r>0$,
\begin{equation}\label{APP:GREEN:est:G:1}
\Vert G_\eps(x,\cdot)\Vert_{C^{2k-1,\nu}(\O-(B_r(0)\cup B_r(x))}\leq C(\delta, k, \mu, L,r,\nu,\mu_0)\hbox{ for all }|x|>\delta.
\end{equation}
It then follows from Ascoli's theorem that, up to extraction of a subfamily, there exists $G_0(x,\cdot)\in C^{2k-1}(\Obar-\{x,0\})$ such that
\begin{equation}\label{APP:GREEN:lim:Ge}
\lim_{\eps\to 0}G_\eps(x,\cdot)=G_0(x,\cdot)\hbox{ in }C^{2k-1}_{loc}(\Obar-\{x,0\}).
\end{equation}
By Theorem \ref{APP:GREEN:th:2:again} again, we also get that
\begin{equation}\label{APP:GREEN:lim:Ge:H}
\lim_{\eps\to 0}G_\eps(x,\cdot)=G_0(x,\cdot)\hbox{ in }H^p_{2k,loc}(\Obar-\{x,0\})\hbox{ for all }p>1.
\end{equation}
Moreover, passing to the limit in \eqref{APP:GREEN:eq:16}, we get that 
\begin{equation}\label{APP:GREEN:eq:17}
\Vert G_0(x,\cdot)\Vert_{L^q(\O)}\leq C(q,\delta, k, L,\mu_0)\hbox{ for all }q\in \left(1,\frac{n}{n-2k}\right)\hbox{ and }|x|>\delta,
\end{equation}
and then $G_0(x,\cdot)\in L^q(\O)$ for all $q\in \left(1,\frac{n}{n-2k}\right)$ and $x\neq 0$. Similarly, using \eqref{APP:GREEN:eq:16:crit}, we get that
\begin{equation}\label{APP:GREEN:eq:16:crit:final}
\Vert G_0(x,\cdot)\Vert_{L^{\crit}(\O\setminus B_{\delta/2}(x))}\leq C(\delta, k, L,\mu_0)\hbox{ when }|x|>\delta. 
\end{equation}
So that $G_0(x,\cdot)\in L^{\crit}_{loc}(\O\setminus\{x\})$.

\medskip\noindent{\bf Step 3: Representation formula.} We fix $f\in L^{\frac{2n}{n+2k}}(\O)\cap L^p_{loc}(\Mmx)$, $p>\frac{n}{2k}>1$. Via the coercivity of $P-V_\eps$ and $P-V_0$, it follows from variational methods (see also Theorem \ref{APP:GREEN:th:3}) that there exists  $\varphi_\eps\in \hundeux\cap H_{2k}^{\frac{2n}{n+2k}}(\O)$ and $\varphi_0 \in \hundeux$ such that 
\begin{equation}\label{APP:GREEN:eq:phieps:phi0}
\left\{\begin{array}{cc}
(P-V_\eps)\varphi_\eps=f&\hbox{ in }\O\\
\partial_\nu^i{\varphi_\eps}_{|\partial\O}=0&\hbox{ for }i=0,...,k-1
\end{array}\right\}
\hbox{ and }
\left\{\begin{array}{cc}
(P-V_0)\varphi_0=f&\hbox{ in }\O\\
\partial_\nu^i{\varphi_0}_{|\partial\O}=0&\hbox{ for }i=0,...,k-1
\end{array}\right\}
\end{equation}
As one checks,
\begin{equation}\label{APP:GREEN:lim:phi:eps}
\lim_{\eps\to 0}\varphi_\eps=\varphi_0\hbox{ in }\hundeux\hbox{ and }\lim_{\eps\to 0}\varphi_\eps=\varphi_0\hbox{ in }C^0_{loc}(\Obar\setminus \{0\})
\end{equation}
We now write Green's formula for $\varphi_\eps$ to get
$$\varphi_\eps(x)=\int_\O G_\eps(x,\cdot)f\, dy\hbox{ for }x\neq 0\hbox{ and for all }\eps>0.$$
With \eqref{APP:GREEN:eq:16}, \eqref{APP:GREEN:eq:16:crit},  \eqref{APP:GREEN:lim:Ge}, \eqref{APP:GREEN:eq:17}, \eqref{APP:GREEN:eq:16:crit:final} and \eqref{APP:GREEN:lim:phi:eps}, we pass to the limit  to get
$$\varphi_0(x)=\int_\O G_0(x,\cdot)f\, dy.$$
This yields the existence of a Green's function for $P-V_0$ in Theorem \ref{APP:GREEN:th:Green:main}. Concerning uniqueness, let us consider another Green's function as in Theorem \ref{APP:GREEN:th:Green:main}, say $\bar{G}_0$, and, given $x\in \Mmx$, let us define $H_x:=G_0(x,\cdot)-\bar{G}_0(x,\cdot)$. We then get that $H_x\in L^q(\O)$ for all $1\leq q<\frac{n}{n-2k}$ and $\int_\O H_x f\, dy=0$ for all $f\in C^0_c(\O)$. By density, this identity is also valid for all $f\in L^{q'}(\O)$ where $\frac{1}{q}+\frac{1}{q'}=1$. By duality, this yields $H_x\equiv 0$, and then $\bar{G}_0=G_0$, which proves uniqueness. This ends the proof of Theorem \ref{APP:GREEN:th:Green:main}.

\medskip\noindent{\bf Step 4: First pointwise control.} As above, we fix $\delta>0$ and we take $x\in \Omega$ such that $|x|>\delta$. It follows from \eqref{APP:GREEN:eq:Ge}, \eqref{APP:GREEN:est:G:1}, \eqref{APP:GREEN:lim:Ge} and regularity theory (see Theorem \ref{APP:GREEN:th:2:again}) that for all $l\in \{0,...,2k-1\}$, we have that
\begin{equation}\label{APP:GREEN:est:G:2}
| \nabla_y^lG_\eps(x,y)|\leq  C(\O,\delta, k, \mu_0, L)\hbox{ for  }\{|x-y|\geq\delta,\, |x|\geq\delta,\,|y|\geq\delta\}.
\end{equation}
and
\begin{equation}\label{APP:GREEN:est:G:3}
| \nabla_y^lG_0(x,y)|\leq  C(\O,\delta, k, \mu_0, L)\hbox{ for  }\{|x-y|\geq\delta,\, |x|\geq\delta,\,|y|\geq\delta\}.
\end{equation}
We fix $\gamma\in (0,n-2k)$. Since $G_\eps(x,\cdot)$ satisfies \eqref{APP:GREEN:eq:Ge} in the weak sense and $G_\eps(x,\cdot)\in H_{k}^2(B_{\delta/2}(0))$, it follows from Lemma \ref{APP:GREEN:lem:main} that for all $p>1$, there exists $\mu=\mu(\gamma,L,\delta)>0$, there exists $C=C(\O,\gamma,p, L,\delta)>0$ such that
\begin{equation*}
|y|^{\gamma}|G_\eps(x,y)|\leq C  \Vert G_\eps(x,\cdot)\Vert_{L^p(B_{\delta/2}(0))}\hbox{ for all }y\in B_{\delta/3}(0)-\{0\}
\end{equation*}
when $|x|\geq \delta$. It then follows from \eqref{APP:GREEN:eq:16} that
\begin{equation}\label{APP:GREEN:ineq:36}
|y|^{\gamma}|G_\eps(x,y)|\leq C(\O,\delta,k,L,\gamma) \hbox{ for all }y\in B_{\delta/2}(0)-\{0\}\hbox{ and }|x|\geq \delta.
\end{equation}
With Lemma \ref{APP:GREEN:lem:main}, for all $0\leq l\leq 2k-1$, there exists  $C(\delta,k,L,\gamma,l)>0$ such that
\begin{equation}\label{APP:GREEN:ineq:36:bis}
|y|^{\gamma+l}|\nabla_y^lG_\eps(x,y)|\leq C(\O,\delta,k,L,\gamma,l) \hbox{ for all }y\in B_{\delta/2}(0)-\{0\}\hbox{ and }|x|\geq \delta.
\end{equation}

These inequalities are valid for $\eps>0$, and then for $\eps=0$. In order to get the full estimates of Theorem \ref{APP:GREEN:th:Green:pointwise:BIS}, we now perform infinitesimal versions of these estimates.

\subsection{Asymptotics for the Green's function close to the singularity}\label{APP:GREEN:sec:green2}
We prove an infinitesimal version of \eqref{APP:GREEN:est:G:2} and \eqref{APP:GREEN:ineq:36} for $x,y$ close to the singularity $0$. 

\begin{theorem}\label{APP:GREEN:th:G:close} Let $\Omega$ be a smooth domain of $\rn$ such that $0\in\Omega$ is an interior point. Fix $k\in\nn$ such that $2\leq 2k<n$, $L>0$ and $\mu>0$. Fix an operator $P$ that satisfies \eqref{APP:GREEN:def:okl}, $V\in \Pl$ and a family $(V_\eps)$ as in \eqref{APP:GREEN:hyp:Ve}. For $\mu>0$ sufficiently small, let $G_\eps$ be the Green's function for $P-V_\eps$, $\eps\geq 0$. Let us fix $\mU,\mV$ two open subsets of $\rn$ such that
\begin{equation*}
\mU\subset\subset \rn-\{0\}\, ,\, \mV\subset\subset \rn \hbox{ and }\overline{\mU}\cap\overline{\mV}=\emptyset.
\end{equation*}
We let $\alpha_0:=\alpha_0(\mU,\mV)>0$ be such that $|\alpha X|<d(0,\partial\O)/2$ for all $0<\alpha<\alpha_0$ and $X\in \mU\cup\mV$.  We fix $\gamma\in (0, n-2k)$. Then there exists $\mu=\mu(\gamma)>0$, there exists $C(\mU,\mV,\mu, k, L)>0$ such that 
\begin{equation}\label{APP:GREEN:ineq:G:close}
\left| |X|^\gamma \alpha^{n-2k+l}\nabla_y^lG_\eps( \alpha X,\alpha Y )\right|\leq C(\mU,\mV,k,L)
\end{equation}
for all $X\in \mV-\{0\}$, $Y\in\mU$, $l=0,...,2k-1$, $\alpha\in (0,\alpha_0)$ and $\eps\geq 0$.
\end{theorem}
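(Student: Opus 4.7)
The plan is to view the statement as a scaling-invariant estimate for a rescaled Green's function on a growing domain, and then combine interior regularity with symmetry and the regularity Lemma \ref{APP:GREEN:lem:main}.

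First I would rescale: define $\tilde{G}_\eps(X,Y):=\alpha^{n-2k}G_\eps(\alpha X,\alpha Y)$ on $\Omega_\alpha:=\alpha^{-1}\Omega$. A direct chain-rule computation shows $\tilde{G}_\eps$ is the Green's function of $\tilde{P}_\alpha-\tilde{V}_{\alpha,\eps}$ on $\Omega_\alpha$ (with Dirichlet conditions up to order $k-1$), where $\tilde{P}_\alpha:=\Delta^k+\tilde{h}_\alpha$, $\tilde{h}_\alpha(Y):=\alpha^{2k}h(\alpha Y)$, and $\tilde{V}_{\alpha,\eps}(Y):=\alpha^{2k}V_\eps(\alpha Y)$. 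Since $\|\tilde{h}_\alpha\|_\infty\leq\alpha^{2k}L$, the Hardy bound $|Y|^{2k}|\tilde{V}_{\alpha,\eps}(Y)|\leq\mu$ is preserved, and a change of variables back to $\Omega$ shows the coercivity $\int_{\Omega_\alpha}u(\tilde{P}_\alpha-\tilde{V}_{\alpha,\eps})u\,dY\geq(2L)^{-1}\int_{\Omega_\alpha}(\Delta^{k/2}u)^2\,dY$ holds uniformly in $\alpha\in(0,\alpha_0)$; thus the rescaled problem satisfies the hypotheses of Theorem \ref{APP:GREEN:th:Green:main} uniformly. Using $\nabla_Y^l\tilde{G}_\eps(X,Y)=\alpha^{n-2k+l}\nabla_y^lG_\eps(\alpha X,\alpha Y)$, the target inequality \eqref{APP:GREEN:ineq:G:close} becomes
\[
|X|^\gamma|\nabla_Y^l\tilde{G}_\eps(X,Y)|\leq C\quad\text{for }X\in\mV\setminus\{0\},\ Y\in\mU,\ l\leq 2k-1,
\]
uniformly in $\alpha\in(0,\alpha_0)$ and $\eps\geq 0$.

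Next I would transfer the derivative estimate to a pointwise estimate via interior regularity and the symmetry of the Green's function. Pick a slightly larger open set $\mU'$ with $\mU\subset\subset\mU'$ and $\overline{\mU'}\cap(\overline{\mV}\cup\{0\})=\emptyset$. For $X\in\mV$, the function $Y\mapsto\tilde{G}_\eps(X,Y)$ solves the homogeneous equation $(\Delta_Y^k+\tilde{h}_\alpha-\tilde{V}_{\alpha,\eps})\tilde{G}_\eps(X,\cdot)=0$ on $\mU'$ with coefficients uniformly bounded there (since $\mU'$ is away from $0$). Interior regularity (Theorems \ref{APP:GREEN:th:2:again} and \ref{APP:GREEN:th:2:holder}) then gives, for some $p>1$ and a constant uniform in $\alpha,\eps$,
\[
|\nabla_Y^l\tilde{G}_\eps(X,Y)|\leq C\,\|\tilde{G}_\eps(X,\cdot)\|_{L^p(\mU')}\quad\text{for }Y\in\mU.
\]
It remains to show $|X|^\gamma\|\tilde{G}_\eps(X,\cdot)\|_{L^p(\mU')}\leq C$. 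By the symmetry $\tilde{G}_\eps(X,Z)=\tilde{G}_\eps(Z,X)$, Fubini reduces this to a pointwise bound $|X|^\gamma|\tilde{G}_\eps(Z,X)|\leq C$ uniform in $Z\in\mU'$ and $X\in\mV\setminus\{0\}$. For each $Z\in\mU'$, the function $X\mapsto\tilde{G}_\eps(Z,X)$ satisfies $(\Delta_X^k+\tilde{h}_\alpha-\tilde{V}_{\alpha,\eps})\tilde{G}_\eps(Z,\cdot)=0$ on a ball around $0$ disjoint from $Z$, with the ``almost Hardy'' coefficient $|X|^{2k}|\tilde{V}_{\alpha,\eps}(X)|\leq\mu$. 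Applying Lemma \ref{APP:GREEN:lem:main} (for $\mu<\mu_\gamma$) together with the integral bound of \eqref{APP:GREEN:eq:16} applied to the rescaled operator yields $|X|^\gamma|\tilde{G}_\eps(Z,X)|\leq C$ for $X$ near $0$; for $X$ bounded away from $0$ the same bound is immediate from standard interior regularity and the diameter of $\mV$.

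The main obstacle will be verifying that every constant above is uniform in $\alpha\in(0,\alpha_0)$, since the rescaled domain $\Omega_\alpha=\alpha^{-1}\Omega$ grows to $\rn$ as $\alpha\to 0$. This requires revisiting the proof of Theorem \ref{APP:GREEN:th:Green:main} and the estimates \eqref{APP:GREEN:eq:16}--\eqref{APP:GREEN:ineq:36:bis} and checking that their constants depend on the underlying domain only through the (uniform) coercivity constant and through interior regularity on compactly contained sets: on $\mU'$, $\mV$, and on a fixed ball around $0$, each of which is separated from $\partial\Omega_\alpha$ by a distance $\gtrsim\alpha^{-1}$ and so does not ``see'' the moving boundary. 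Once this uniformity is secured, every constant becomes universal in the sense needed and the proof concludes.
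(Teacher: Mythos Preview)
Your proposal is correct and relies on the same key ingredients as the paper (rescaling, Lemma \ref{APP:GREEN:lem:main} for the $|X|^\gamma$ weight, interior regularity for the $Y$-derivatives), but the organization differs. The paper never works on the moving domain $\Omega_\alpha$: instead it fixes $f\in C^\infty_c(\mU')$, sets $f_\alpha(x)=\alpha^{-(n+2k)/2}f(x/\alpha)$, solves $(P-V_\eps)\varphi_{\alpha,\eps}=f_\alpha$ on the \emph{fixed} domain $\Omega$ (so coercivity and Sobolev are used once, with the given constants), rescales the solution to $\tilde\varphi_{\alpha,\eps}(X)=\alpha^{(n-2k)/2}\varphi_{\alpha,\eps}(\alpha X)$, and applies Lemma \ref{APP:GREEN:lem:main} to $\tilde\varphi_{\alpha,\eps}$ on a fixed ball containing $\mV$. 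Duality then gives $\||X|^\gamma\tilde G_{\alpha,\eps}(X,\cdot)\|_{L^{\crit}(\mU')}\le C$, after which interior regularity in $Y$ on $\mU$ finishes. This sidesteps entirely the ``main obstacle'' you flag: your route via symmetry and Lemma \ref{APP:GREEN:lem:main} applied directly to $X\mapsto\tilde G_\eps(Z,X)$ is valid, but to feed it you need the analogue of \eqref{APP:GREEN:eq:16} on $\Omega_\alpha$ with $\alpha$-independent constants, and establishing that bound is precisely the paper's duality step in disguise. In short, both arguments are the same at heart; the paper's packaging avoids the domain-uniformity check rather than having to carry it out.
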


\noindent{\it Proof of Theorem \ref{APP:GREEN:th:G:close}.} We first set $\mU',\mV'$ two open subsets of $\rn$ such that
\begin{equation*}
\mU\subset\subset \mU'\subset\subset \rn-\{0\}\, ,\, \mV\subset\subset \mV'\subset\subset \rn \hbox{ and }\overline{\mU'}\cap\overline{\mV}=\emptyset.
\end{equation*}
We fix $f\in C^\infty_c(\mU')$ and for any $0<\alpha<\alpha_0$, we set 
$$f_\alpha(x):=\frac{1}{\alpha^{\frac{n+2k}{2}}}f\left(\frac{x}{\alpha}\right)\hbox{ for all }x\in \Omega.$$
As one checks, $f_\alpha\in C^\infty_c( \alpha \mU')$ and $ \alpha \mU'\subset\subset \Mmx$. It follows from Theorem \ref{APP:GREEN:th:3} that there exists $\varphi_{\alpha,\eps}\in H_{2k}^q(\O)\cap H_{k,0}^q(\O)$ for all $q>1$ be such that
\begin{equation}\label{APP:GREEN:eq:phi:alpha}
\left\{\begin{array}{cc}
P\varphi_{\alpha,\eps}-V_\eps\varphi_{\alpha,\eps}=f_\alpha&\hbox{ in }\O\, \\
{\partial_{\nu}^i\varphi_\eps}_{|\partial\O}=0&\hbox{ for }i=0,...,k-1\\
\end{array}\right\}\hbox{ in the weak sense.}
\end{equation}

It follows from Sobolev's embedding theorem that $\varphi_{\alpha,\eps}\in C^{2k-1}(\Obar)$. We define
\begin{equation}\label{APP:GREEN:def:tilde:phi}
\tilde{\varphi}_{\alpha,\eps}(X):= \alpha^{\frac{n-2k}{2}}\varphi_{\alpha,\eps}\left(\alpha X\right)\hbox{ for all }X\in \rn-\{0\},\, |\alpha X|<d(0,\partial\O).
\end{equation}
A change of variable yields
\begin{eqnarray*}
\Vert f_\alpha\Vert_{\frac{2n}{n+2k}}^{\frac{2n}{n+2k}}&=&\int_\O |f_\alpha(x)|^{\frac{2n}{n+2k}}\, dx=\int_{\alpha \mU'} |f_\alpha(x)|^{\frac{2n}{n+2k}}\, dx=  \int_{\mU'} |f(X)|^{\frac{2n}{n+2k}}\, dX.
\end{eqnarray*}
Therefore
\begin{equation}\label{APP:GREEN:eq:31}
\Vert f_\alpha\Vert_{L^{\frac{2n}{n+2k}}(\O)}=\Vert f\Vert_{L^{\frac{2n}{n+2k}}(\mU')}.
\end{equation}
With \eqref{APP:GREEN:coer:PV}, \eqref{APP:GREEN:eq:phi:alpha} and the Sobolev inequality \eqref{sobo:ineq:rn}, we get
\begin{eqnarray*}
\frac{1}{2L}\Vert \varphi_{\alpha,\eps}\Vert_{\hundeux}^2&\leq& \int_\O \varphi_{\alpha,\eps}(P-V_\eps)\varphi_{\alpha,\eps}\, dx=\int_\O f_\alpha\varphi_{\alpha,\eps} \, dx\\
&\leq & \Vert f_\alpha\Vert_{L^{\frac{2n}{n+2k}}(\O)} \Vert \varphi_{\alpha,\eps}\Vert_{L^{\frac{2n}{n-2k}}(\O)}\leq \sqrt{K(n,k)} \Vert f_\alpha\Vert_{L^{\frac{2n}{n+2k}}(\O)} \Vert \varphi_{\alpha,\eps}\Vert_{\hundeux}.
\end{eqnarray*}
Therefore, using again the Sobolev inequality \eqref{sobo:ineq:rn} and \eqref{APP:GREEN:eq:31}, we get that
\begin{equation}\label{APP:GREEN:eq:33}
\Vert \varphi_{\alpha,\eps}\Vert_{L^{\frac{2n}{n-2k}}(\O)}\leq C(n,k,L) \Vert f\Vert_{L^{\frac{2n}{n+2k}}(\mU')} 
\end{equation}
Equation \eqref{APP:GREEN:eq:phi:alpha} rewrites
\begin{equation}\label{APP:GREEN:eq:tpl}
\Delta^k\tilde{\varphi}_{\alpha,\eps}+ \alpha^{2k}h(\alpha \cdot)\tilde{\varphi}_{\alpha,\eps}-\alpha^{2k}V_\eps(\alpha X)\tilde{\varphi}_{\alpha,\eps}=f
\end{equation}
weakly locally in $\rn$. Since $V_\eps$ satisfies \eqref{APP:GREEN:hyp:Ve}, we have that
\begin{equation*}
\left|\alpha^{2k}V_\eps(\alpha X)\right|\leq \mu |X|^{-2k}\hbox{ for all }X\in \mV'-\{0\}
\end{equation*}
Since $f(X)=0$ for all $X\in \mV'$ and $\tilde{\varphi}_{\alpha,\eps}\in H_{2k,loc}^q(\mV')$, it follows from the regularity Lemma \ref{APP:GREEN:lem:main} that there exists $\mu=\mu(\gamma)>0$ such that for any  $\delta>0$ such that $B_{\delta}(0)\subset\subset \mV'$, there exists $C( L, \delta,\gamma, \mV')>0$ such that
\begin{equation*}
|X|^\gamma \left|\tilde{\varphi}_{\alpha,\eps}(X)\right|\leq 
C( L, \delta,\gamma, U,U')\Vert \tilde{\varphi}_{\alpha,\eps} \Vert_{L^{\crit}(\mV')}\hbox{ for all }X\in B_{\delta}(0)-\{0\}
\end{equation*}
Since the coefficients are uniformly bounded outside $0$, classical elliptic regularity yields
\begin{equation*}
 \left|\tilde{\varphi}_{\alpha,\eps}(X)\right|\leq C( L, \delta,\gamma,\mV,\mV')\Vert \tilde{\varphi}_{\alpha,\eps} \Vert_{L^{\crit}(\mV')}\hbox{ for all }X\in \mV- B_{\delta}(0)
\end{equation*}
These two inequalities yield the existence of $C( L, \delta,\gamma,\mV,\mV')$ such that
\begin{equation}\label{APP:GREEN:eq:44}
|X|^\gamma \left|\tilde{\varphi}_{\alpha,\eps}(X)\right|\leq C\Vert \tilde{\varphi}_{\alpha,\eps} \Vert_{L^{\crit}(\mV')}\hbox{ for all }X\in \mV-\{0\}
\end{equation}
Arguing as in the proof of \eqref{APP:GREEN:eq:31}, we have that
\begin{equation}\label{APP:GREEN:eq:32}
\Vert \tilde{\varphi}_{\alpha,\eps}\Vert_{L^{\frac{2n}{n-2k}}(\mV')}\leq \Vert  \varphi_{\alpha,\eps}\Vert_{L^{\frac{2n}{n-2k}}(\O)}.
\end{equation}
Putting together \eqref{APP:GREEN:def:tilde:phi}, \eqref{APP:GREEN:eq:33}, \eqref{APP:GREEN:eq:44} and \eqref{APP:GREEN:eq:32} we get that
\begin{equation}\label{APP:GREEN:ineq:G:a}
|X|^\gamma \left| \alpha^{\frac{n-2k}{2}}\varphi_{\alpha,\eps}\left(\alpha X\right) \right|\leq C( L, \delta, \mu,\gamma,\mV,\mV')\Vert f\Vert_{L^{\frac{2n}{n+2k}}(\mU')}
\end{equation}
for all $X\in \mV-\{0\}$. For $\alpha>0$, we define
\begin{equation}\label{APP:GREEN:def:G:t:close}
\tilde{G}_{\alpha,\eps}(X,Y):=\alpha^{n-2k}G_\eps(\alpha X,\alpha Y )\hbox{ for }(X,Y)\in \mV'\times \mU',\, X\neq 0
\end{equation}
It follows from Green's representation formula for $G_\eps$, $\eps>0$, and \eqref{APP:GREEN:eq:phi:alpha} that
$$\varphi_{\alpha,\eps}\left(\alpha X\right)=\int_\O G_\eps\left(\alpha X,y\right)f_\alpha(y)\, dy$$
for all $X\in \mV-\{0\}$. 
With a change of variable, we then get that
\begin{equation}\label{APP:GREEN:ineq:G:b}
\alpha^{\frac{n-2k}{2}}\varphi_{\alpha,\eps}\left(\alpha X\right)=\int_{\mU'} \tilde{G}_{\alpha,\eps}(X,Y) f(Y)\, dY
\end{equation}
for all $X\in \mV-\{0\}$. Putting together \eqref{APP:GREEN:ineq:G:a} and \eqref{APP:GREEN:ineq:G:b}, we get that
\begin{equation*}
\left||X|^\gamma\int_{\mU'} \tilde{G}_{\alpha,\eps}(X,Y) f(Y)\, dY
\right|\leq C( L, \delta, \mu,\gamma,\mV,\mV',\omega') \Vert f\Vert_{L^{\frac{2n}{n+2k}}(\mU')}\end{equation*}
for all $f\in C^\infty_c(\mU')$ and $ X\in \mV-\{0\}$. It then follows from duality arguments that
\begin{equation}\label{APP:GREEN:ineq:35}
\Vert |X|^\gamma \tilde{G}_{\alpha,\eps}(X,\cdot)\Vert_{L^{\crit}(\mU')}\leq C( L, \delta, \mu,\gamma,\mV,\mV',\mU')\hbox{ for }X\in \mV-\{0\} 
\end{equation}
Since $G_\eps(x,\cdot)$ is a solution to $(P-V_\eps)G_\eps(x,\cdot)=0$ in $\O-\{0,x\}$, as in \eqref{APP:GREEN:eq:tpl},
we get that
\begin{eqnarray*}
&&\Delta ^k\tilde{G}_{\alpha,\eps}(X,\cdot)+\alpha^{2k}h(\alpha \cdot)\tilde{G}_{\alpha,\eps}(X,\cdot)\\
&&-\alpha^{2k}V_\eps(\alpha \cdot)\tilde{G}_{\alpha,\eps}(X,\cdot)=0\hbox{ in }\mU'\subset\subset \rn-\{0,X\} 
\end{eqnarray*}
Since $\mU'\subset\subset \rn-\{0,X\}$, there exists $c_{\mU'}>0$ such that $|Y|\geq c_{\mU'}$ for all $Y\in\mU'$. Since $V_\eps$ satisfies \eqref{APP:GREEN:hyp:Ve}, we have that
\begin{equation*}
\left|\alpha^{2k}V_\eps(\alpha Y)\right|\leq \mu c_{\omega'}^{-2k}\hbox{ for all }Y\in \mU'.
\end{equation*}
It then follows from elliptic regularity theory (see Theorem \ref{APP:GREEN:th:2:again}) that
\begin{equation*}
|X|^\gamma |\tilde{G}_{\alpha,\eps}(X,Y)|\leq C(k,L, \mu,\mU',\mV')\Vert |X|^\gamma \tilde{G}_{\alpha,\eps}(X,\cdot)\Vert_{L^{\crit}(\mU')}
\end{equation*}
for all $Y\in \mU\subset\subset\mU'$ and $X\in \mV-\{0\}$. The conclusion \eqref{APP:GREEN:ineq:G:close}  of Theorem \ref{APP:GREEN:th:G:close} then follows from this inequality, \eqref{APP:GREEN:ineq:35}, the definition \eqref{APP:GREEN:def:G:t:close} of $\tilde{G}_{\alpha,\eps}$, the limit \eqref{APP:GREEN:lim:Ge} and elliptic regularity for the derivatives along $y$.

\subsection{Asymptotics for the Green's function far from the singularity}\label{APP:GREEN:sec:green3}
We prove an infinitesimal version of \eqref{APP:GREEN:est:G:2} and \eqref{APP:GREEN:ineq:36} for $x,y$ far from the singularity. 

\begin{theorem}\label{APP:GREEN:th:G:far} We fix $p\in \Mmx$ and $\mU,\mV$ two open subsets of $\rn$ such that
\begin{equation*}
\mU\subset\subset \rn \, ,\, \mV\subset\subset \rn \hbox{ and }\overline{\mU}\cap\overline{\mV}=\emptyset.
\end{equation*}
We let $\alpha_0>0$ be such that 
\begin{equation}\label{APP:GREEN:hyp:alpha}
|\alpha X|<\frac{1}{2}\min\{d(0,\partial\O),|p|, d(p,\partial\O)\} \hbox{ for all }0<\alpha<\alpha_0\hbox{ and }X\in \mV\cup \mU.
\end{equation}
Then for all $\gamma\in (0,n-2k)$, there exists $\mu=\mu(\gamma)>0$ and $C(\mV,\mU, L,\alpha_0,\gamma,\mu)>0$ such that 
\begin{equation}\label{APP:GREEN:ineq:G:far}
\left|  \alpha^{n-2k+l}\nabla_y^lG_\eps(p+\alpha X,p+\alpha Y )\right|\leq C(U,\omega, L,\alpha_0,\gamma,\mu)
\end{equation}
for all $X\in \mV $ and $Y\in\mU$, $l=0,...,2k-1$, $\alpha\in (0,\alpha_0)$ and $\eps\geq 0$ small enough.
\end{theorem}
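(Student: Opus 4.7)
The plan is to adapt the blow-up strategy used in the proof of Theorem \ref{APP:GREEN:th:G:close}, but centered at the interior point $p\neq 0$ rather than at the origin. The essential simplification is that, thanks to \eqref{APP:GREEN:hyp:alpha}, the points $p+\alpha X$ stay uniformly bounded away from $0$, so the potential $V_\eps$ is uniformly bounded on the relevant rescaled region and the Hardy-type regularity Lemma \ref{APP:GREEN:lem:main} is not needed; standard interior elliptic regularity suffices. The condition $\mu<\mu_\gamma$ is inherited only through the coercivity of $P-V_\eps$.

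First I would fix an auxiliary open set $\mU'$ with $\mU\subset\subset\mU'\subset\subset\rn$ and $\overline{\mU'}\cap\overline{\mV}=\emptyset$, still satisfying \eqref{APP:GREEN:hyp:alpha} after possibly shrinking $\alpha_0$. Given $f\in C^\infty_c(\mU')$, I would set $f_\alpha(x):=\alpha^{-\frac{n+2k}{2}}f\big(\frac{x-p}{\alpha}\big)$, so that $f_\alpha\in C^\infty_c(p+\alpha\mU')\subset C^\infty_c(\Mmx)$ with $\Vert f_\alpha\Vert_{L^{\frac{2n}{n+2k}}(\O)}=\Vert f\Vert_{L^{\frac{2n}{n+2k}}(\mU')}$. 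By Theorem \ref{APP:GREEN:th:3}, I would solve $(P-V_\eps)\varphi_{\alpha,\eps}=f_\alpha$ with Dirichlet boundary data, and then introduce the rescaled function
\[
\tilde{\varphi}_{\alpha,\eps}(X):=\alpha^{\frac{n-2k}{2}}\varphi_{\alpha,\eps}(p+\alpha X),
\]
which satisfies, weakly on the rescaled domain,
\[
\Delta^k\tilde{\varphi}_{\alpha,\eps}+\alpha^{2k}h(p+\alpha\cdot)\tilde{\varphi}_{\alpha,\eps}-\alpha^{2k}V_\eps(p+\alpha\cdot)\tilde{\varphi}_{\alpha,\eps}=f.
\]

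The second step is to note that for $X\in\mV\cup\mU$, \eqref{APP:GREEN:hyp:alpha} gives $|p+\alpha X|\geq|p|/2$, so $|\alpha^{2k}V_\eps(p+\alpha X)|\leq \mu(2/|p|)^{2k}\alpha^{2k}$ and $|\alpha^{2k}h(p+\alpha X)|\leq L\alpha^{2k}$, both uniformly bounded. The coercivity estimate \eqref{APP:GREEN:coer:PV} combined with the Sobolev inequality \eqref{sobo:ineq:rn} yields, exactly as in \eqref{APP:GREEN:eq:33},
\[
\Vert\tilde{\varphi}_{\alpha,\eps}\Vert_{L^{\crit}(\mV')}\leq\Vert\varphi_{\alpha,\eps}\Vert_{L^{\crit}(\O)}\leq C(n,k,L)\Vert f\Vert_{L^{\frac{2n}{n+2k}}(\mU')}.
\]
On $\mV$ the right-hand side of the rescaled equation vanishes, the coefficients are uniformly bounded, and $\mV\subset\subset\mV'$, so Theorem \ref{APP:GREEN:th:2:again} yields a $C^{2k-1}(\mV)$ bound of $\tilde{\varphi}_{\alpha,\eps}$ by its $L^{\crit}(\mV')$ norm, and hence
\[
|\tilde{\varphi}_{\alpha,\eps}(X)|\leq C(\mV,\mV',k,L,\alpha_0)\Vert f\Vert_{L^{\frac{2n}{n+2k}}(\mU')}\quad\hbox{for all }X\in\mV.
\]

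The final step is the same duality trick as in the proof of Theorem \ref{APP:GREEN:th:G:close}. Set
\[
\tilde{G}_{\alpha,\eps}(X,Y):=\alpha^{n-2k}G_\eps(p+\alpha X,p+\alpha Y)\quad\hbox{for }(X,Y)\in\mV'\times\mU'.
\]
Green's representation for $\varphi_{\alpha,\eps}$ combined with the change of variable $y=p+\alpha Y$ yields
\[
\alpha^{\frac{n-2k}{2}}\varphi_{\alpha,\eps}(p+\alpha X)=\int_{\mU'}\tilde{G}_{\alpha,\eps}(X,Y)f(Y)\,dY,
\]
so from the preceding bound and duality I would obtain $\Vert\tilde{G}_{\alpha,\eps}(X,\cdot)\Vert_{L^{\crit}(\mU')}\leq C$ uniformly in $X\in\mV$, $\alpha\in(0,\alpha_0)$, $\eps\geq0$. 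Since $\tilde{G}_{\alpha,\eps}(X,\cdot)$ satisfies the rescaled equation with uniformly bounded coefficients on $\mU'$ and $\mU\subset\subset\mU'$, one last application of Theorem \ref{APP:GREEN:th:2:again} converts this $L^{\crit}$ bound into the pointwise and derivative bounds \eqref{APP:GREEN:ineq:G:far}. There is no real obstacle here: the argument is a strict simplification of the proof of Theorem \ref{APP:GREEN:th:G:close}, because we never approach the Hardy singularity. The only item requiring attention is the uniformity in $\eps$, which follows from \eqref{APP:GREEN:hyp:Ve} and the uniform coercivity \eqref{APP:GREEN:coer:PV}, and the passage to $\eps=0$ is obtained from the locally uniform convergence \eqref{APP:GREEN:lim:Ge}.
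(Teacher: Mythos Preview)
Your proposal is correct and follows essentially the same route as the paper's proof: rescale around $p$, use \eqref{APP:GREEN:hyp:alpha} to see that the potential is uniformly bounded on the rescaled region (so only standard interior regularity from Theorem \ref{APP:GREEN:th:2:again} is needed, not Lemma \ref{APP:GREEN:lem:main}), then conclude by the duality argument exactly as in Theorem \ref{APP:GREEN:th:G:close}. The only cosmetic slip is that you use an intermediate set $\mV'$ with $\mV\subset\subset\mV'$ without having introduced it alongside $\mU'$; the paper fixes both $\mU'$ and $\mV'$ at the outset.
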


\noindent{\it Proof of Theorem \ref{APP:GREEN:th:G:far}.} We first set $\mU',\mV'$ two open subsets of $\rn$ such that
\begin{equation*}
\mU\subset\subset \mU'\subset\subset \rn \, ,\, \mV\subset\subset \mV'\subset\subset \rn \hbox{ and }\overline{\mU'}\cap\overline{\mV'}=\emptyset
\end{equation*}
and \eqref{APP:GREEN:hyp:alpha} still holds for $X\in \mV'\cup\mU'$. We fix $f\in C^\infty_c(\mU')$ and for any $0<\alpha<\alpha_0$, we set 
$$f_\alpha(x):=\frac{1}{\alpha^{\frac{n+2k}{2}}}f\left(\frac{x-p}{\alpha}\right)\hbox{ for all }x\in \O.$$
As one checks, $f_\alpha\in C^\infty_c(p+\alpha \mU')$ and $p+\alpha \mU'\subset\subset \Mmx$. It follows from Theorem \ref{APP:GREEN:th:3} that there exists $\varphi_{\alpha,\eps}\in H_{2k}^q(\Omega)\cap H_{k,0}^q(\Omega)$ for all $q>1$ such that
\begin{equation}\label{APP:GREEN:eq:phi:alpha:far}
\left\{\begin{array}{cc}
P\varphi_{\alpha,\eps}-V_\eps\varphi_{\alpha,\eps}=f_\alpha&\hbox{ in }\O\\
{\partial_{\nu}^i\varphi_{\alpha,\eps}}_{|\partial\O}=0&\hbox{ for }i=0,...,k-1.
\end{array}\right\}
\end{equation}
It follows from Sobolev's embedding theorem that $\varphi_{\alpha,\eps}\in C^{2k-1}(\Obar)$. We define
\begin{equation}\label{APP:GREEN:def:tilde:phi:far}
\tilde{\varphi}_{\alpha,\eps}(X):= \alpha^{\frac{n-2k}{2}}\varphi_{\alpha,\eps}\left(p+\alpha X\right)\hbox{ for all }X\in \rn,\, |\alpha X|<d(p,\partial\O).
\end{equation}
As in \eqref{APP:GREEN:eq:31} and  \eqref{APP:GREEN:eq:33}, we get
\begin{equation}\label{APP:GREEN:eq:31:far}
\Vert f_\alpha\Vert_{L^{\frac{2n}{n+2k}}(\O)}=\Vert f\Vert_{L^{\frac{2n}{n+2k}}(\mU')}\hbox{ and }\Vert \varphi_{\alpha,\eps}\Vert_{L^{\frac{2n}{n-2k}}(\O)}\leq C(n,k,L) \Vert f\Vert_{L^{\frac{2n}{n+2k}}(\mU')}.
\end{equation}
Equation \eqref{APP:GREEN:eq:phi:alpha:far} rewrites
\begin{equation}\label{APP:GREEN:eq:tpl:far}
\Delta^k\tilde{\varphi}_{\alpha,\eps}+\alpha^{2k}h(p+\alpha \cdot)\tilde{\varphi}_{\alpha,\eps}-\alpha^{2k}V_\eps(p+\alpha X)\tilde{\varphi}_{\alpha,\eps}=f
\end{equation}
weakly in $\rn$. Since $V_\eps$ satisfies \eqref{APP:GREEN:hyp:Ve}, we have that
\begin{equation*}
\left|\alpha^{2k}V_\eps(p+\alpha X)\right|\leq \mu \alpha^{2k}|p+\alpha X|^{-2k}\hbox{ for all }X\in \mV' 
\end{equation*}
With \eqref{APP:GREEN:hyp:alpha}, we get that
\begin{equation*}
\left|\alpha^{2k}V_\eps(p+\alpha X)\right|\leq \mu\left(\frac{2|p|}{\alpha}\right)^{-2k} \leq C(\mu_0)\hbox{ for all }X\in \mV' 
\end{equation*}
Since $f(X)=0$ for all $X\in \mV'$, it follows from standard regularity theory (see Theorem \ref{APP:GREEN:th:2:again}) that there exists $C( k,L, \mV, \mV',\mU,\mU',\alpha_0)>0$ such that
\begin{equation}\label{APP:GREEN:eq:64}
  \left|\tilde{\varphi}_{\alpha,\eps}(X)\right|\leq C\Vert \tilde{\varphi}_{\alpha,\eps} \Vert_{L^{\crit}(\mV')}\hbox{ for all }X\in \mV
\end{equation}
Arguing as in the proof of \eqref{APP:GREEN:eq:31}, we have that
\begin{equation}\label{APP:GREEN:eq:32:far}
\Vert \tilde{\varphi}_{\alpha,\eps}\Vert_{L^{\frac{2n}{n-2k}}(\mV')}\leq \Vert  \varphi_{\alpha,\eps}\Vert_{L^{\frac{2n}{n-2k}}(\O)}.
\end{equation}
Putting together \eqref{APP:GREEN:def:tilde:phi:far}, \eqref{APP:GREEN:eq:64}, \eqref{APP:GREEN:eq:32:far} and \eqref{APP:GREEN:eq:31:far} we get that
\begin{equation}\label{APP:GREEN:ineq:G:a:far}
  \left| \alpha^{\frac{n-2k}{2}}\varphi_{\alpha,\eps}\left(p+\alpha X\right) \right|\leq C( k,L, \mV, \mV',\mu) \Vert f\Vert_{L^{\frac{2n}{n+2k}}(\mU')} \hbox{ for all }X\in \mV.
\end{equation}
We now just follow verbatim the proof of Theorem \ref{APP:GREEN:th:G:close} above to get the conclusion \eqref{APP:GREEN:ineq:G:far}  of Theorem \ref{APP:GREEN:th:G:far}. We leave the details to the reader.

\subsection{Proof of Theorem \ref{APP:GREEN:th:Green:pointwise:BIS}}\label{APP:GREEN:sec:green4}
We let $\O$, $k$, $\mu$, $L$, $P$, $V$ as in the statement of Theorem \ref{APP:GREEN:th:Green:pointwise:BIS}. With $\mu>0$ small enough, we let $G_0$ be the Green's function of $P-V$ as in Theorem \ref{APP:GREEN:th:Green:main}. Given $\gamma\in (0,n-2k)$, we let $\mu_\gamma>0$ as in \eqref{APP:GREEN:ineq:36} and Theorems \ref{APP:GREEN:th:G:close} and \ref{APP:GREEN:th:G:far} hold when $0<\mu<\mu_\gamma$. We prove here the first estimate of Theorem \ref{APP:GREEN:th:Green:pointwise:BIS} by contradiction. We fix $\omega\subset\subset\O$ 
and we assume that there is a family of operators $(P_i)_{i\in\nn}$ such that $P_i$ satisfies \eqref{APP:GREEN:def:okl} for all $i$, a family of potentials $(V_i)_{i\in\nn}\in {\mathcal P}_{\mu_\gamma}$, sequences $(x_i), (y_i)\in \Mmx$ such that $x_i\neq y_i$ and $x_i\in\omega$ for all $i\in\nn$ and
\begin{equation}\label{APP:GREEN:hyp:absurd}
\lim_{i\to +\infty}\frac{|x_i-y_i|^{n-2k}|G_i(x_i,y_i)|}{\left(\frac{\max\{|x_i|,|y_i|\}}{\min\{|x_i|,|y_i|\}}\right)^\gamma }=+\infty,
\end{equation}
where $G_i$ denotes the Green's function of $P_i-V_i$ for all $i\in\nn$. We distinguish 5 cases:

\smallskip\noindent{\it Case 1:} $|x_i-y_i|=o(|x_i|)$ as $i\to +\infty$. It then follows from the triangle inequality that $|x_i-y_i|=o(|y_i|)$ and $|x_i|=(1+o(1))|y_i|$. Therefore
$$\left(\frac{\max\{|x_i|,|y_i|\}}{\min\{|x_i|,|y_i|\}}\right)^\gamma =1+o(1)$$
and then \eqref{APP:GREEN:hyp:absurd} yields
\begin{equation}\label{APP:GREEN:hyp:absurd:1}
\lim_{i\to +\infty} |x_i-y_i|^{n-2k}|G_i(x_i,y_i)|=+\infty
\end{equation}
We let $Y_i\in\rn$ be such that $y_i:=x_i+|x_i-y_i|Y_i$. In particular, $|Y_i|=1$, so, up to a subsequence, there exists $Y_\infty\in\rn$ such that $\lim_{n\to +\infty}Y_i=Y_\infty$ with $|Y_\infty|=1$
Note that since $x_i\in\omega$, there exists $\eps_0>0$ such that $d(x_i,\partial\O)\geq \eps_0$ for all $i$. We apply Theorem \ref{APP:GREEN:th:G:far} with $p:=x_i$, $\alpha:=|x_i-y_i|$, $\mV=B_{1/3}(0)$, $\mU=B_{1/3}(Y_\infty)$: for $i\in\nn$ large enough, taking $X=0$ and $Y=Y_i$ in \eqref{APP:GREEN:ineq:G:far}, we get that
$$|x_i-y_i|^{n-2k}|G_i(x_i,y_i )|=|x_i-y_i|^{n-2k}|G_i(x_i+|x_i-y_i|\cdot 0, x_i+|x_i-y_i|\cdot Y_i )|\leq C(L,\gamma,\mu)$$
which contradicts \eqref{APP:GREEN:hyp:absurd:1}. This ends Case 1.

\smallskip\noindent The case  $|x_i-y_i|=o(|y_i|)$ as $i\to +\infty$ is equivalent to Case 1.

\medskip\noindent{\it Case 2:} $|x_i|=o(|x_i-y_i|)$ and $|x_i-y_i|\not\to 0$ as $i\to +\infty$. Therefore  \eqref{APP:GREEN:hyp:absurd} rewrites
\begin{equation}\label{APP:GREEN:hyp:absurd:2bis}
\lim_{i\to +\infty}  |x_i|^{\gamma}|G_i(x_i,y_i)| =+\infty
\end{equation}
This is a contradiction with \eqref{APP:GREEN:ineq:36} when $\eps=0$. This ends Case 2 by using the symmetry of $G$.

\medskip\noindent{\it Case 3:} $|x_i|=o(|x_i-y_i|)$ and $|x_i-y_i|\to 0$ as $i\to +\infty$. Then $|x_i|=o(|y_i|)$ and  $|x_i-y_i|=(1+o(1))|y_i|$. In particular, $x_i,y_i\to 0$ as $i\to+\infty$. Therefore  \eqref{APP:GREEN:hyp:absurd} rewrites
\begin{equation}\label{APP:GREEN:hyp:absurd:2}
\lim_{i\to +\infty}   |x_i-y_i|^{n-2k-\gamma}|x_i|^{\gamma}|G_i(x_i,y_i)| =+\infty
\end{equation}
We let $X_i, Y_i\in\rn$ be such that  $x_i:=|x_i-y_i|X_i$ and  $y_i:=|x_i-y_i|Y_i$. In particular, $\lim_{i\to +\infty}|X_i|=0$ and $|Y_i|=1+o(1)$. So, up to a subsequence, there exists $Y_\infty\in\rn$ such that $\lim_{n\to +\infty}Y_i=Y_\infty$ with $|Y_\infty|=1$. We apply Theorem \ref{APP:GREEN:th:G:close} with  $\alpha:=|x_i-y_i|$, $\mV=B_{1/3}(0)$, $\mU=B_{1/3}(Y_\infty)$: for $i\in\nn$ large enough, taking $X=X_i\neq 0$ and $Y=Y_i$ in \eqref{APP:GREEN:ineq:G:close}, we get that
$$|X_i|^\gamma |x_i-y_i|^{n-2k}|G_i(|x_i-y_i|X_i,|x_i-y_i|Y_i)|\leq C(\mu,k,L),$$
and, coming back to the definitions of $X_i$ and $Y_i$, we get a contradiction with  \eqref{APP:GREEN:hyp:absurd:2}. This ends Case 3.

\medskip\noindent{\it Case 4:} $|y_i|=o(|x_i-y_i|)$ as $i\to +\infty$. Since the Green's function is symmetric, this is similar to Case 2 and 3.

\medskip\noindent{\it Case 5:}  $|x_i|\asymp  |y_i|\asymp  |x_i-y_i| $. Then \eqref{APP:GREEN:hyp:absurd} rewrites
\begin{equation}\label{APP:GREEN:hyp:absurd:3}
\lim_{i\to +\infty} |x_i-y_i|^{n-2k}|G_i(x_i,y_i)|=+\infty
\end{equation}
\smallskip{\it Case 5.1:} $|x_i-y_i|\not\to 0$ as $i\to +\infty$. Then it follows from \eqref{APP:GREEN:est:G:2} that $|G_i(x_i,y_i)|\leq C(\mu,k,L)$ for all $i$, which contradicts \eqref{APP:GREEN:hyp:absurd:3}.\par

\smallskip\noindent {\it Case 5.2:} $|x_i-y_i|\to 0$ as $i\to +\infty$. We let $X_i, Y_i\in\rn$ be such that  $x_i:=|x_i-y_i|X_i$ and  $y_i:=|x_i-y_i|Y_i$. In particular, there exists $c>0$ such that $c^{-1}<|X_i|,|Y_i|<c$ and $|X_i-Y_i|\geq c^{-1}$ for all $i$. So, up to a subsequence, there exists $X_\infty,Y_\infty\in\rn$ such that $\lim_{n\to +\infty}X_i=X_\infty\neq 0$  and $\lim_{n\to +\infty}Y_i=Y_\infty\neq 0$ and $X_\infty\neq Y_\infty$. We apply Theorem \ref{APP:GREEN:th:G:close} with  $\alpha:=\alpha_i=|x_i-y_i|$, $\mV=B_{r_0}(X_\infty)$, $\mU=B_{r_0}(Y_\infty)$ for some $r_0>0$ small enough. So for $i\in\nn$ large enough, taking $X=X_i\neq 0$ and $Y=Y_i$ in \eqref{APP:GREEN:ineq:G:close}, we get that
\begin{equation*}
|X_i|^\gamma \alpha_i^{n-2k}\left|   G_i( \alpha_i X_i,\alpha_i Y_i )\right|\leq C(U,\omega,L,\gamma,\mu)
\end{equation*}
and, coming back to the definitions of $X_i$ and $Y_i$, we get that a contradiction with \eqref{APP:GREEN:hyp:absurd:3}. This ends Case 5.

\medskip\noindent Therefore, in all 5 cases, we have obtained a contradiction with \eqref{APP:GREEN:hyp:absurd}. This proves the first estimate of  Theorem \ref{APP:GREEN:th:Green:pointwise:BIS}. The proof of the estimates on the derivative uses the same method by contradiction, with a few more cases to study using regularity theory (Theorem \ref{APP:GREEN:th:2:again}). We leave the details to the reader.

\section{The regularity Lemma}\label{APP:GREEN:sec:lemma}
For any domain $D\subset\rn$, $k\in\nn$ such that $2\leq 2k<n$ and $L>0$, we say that an operator $P$ is of type $O_{k,L}(D)$ if $P:=\Delta^k+h$, where $h\in L^\infty(D)$ and $\Vert h\Vert_{\infty}\leq L$.

\begin{lemma}\label{APP:GREEN:lem:main} Let $k\in\nn$ be such that $2\leq 2k<n$ and $\delta,L>0$. Fix $p>1$ and $\delta_1,\delta_2>0$ such that $0<\delta_1<\delta_2$. We consider a differential operator $P\in O_{k,L}(B_{\delta_2}(0))$ where $B_{\delta_2}(0)\subset\rn$. Then for all $0<\gamma<n-2k$, there exists $\mu=\mu(\gamma,p, L,\delta_1,\delta_2)> 0$ and $C_0=C_0(\gamma,p, L,\delta_1,\delta_2)>0$ such that for any $V\in L^1(B_{\delta_2}(0))$ such that
$$|V(x)|\leq \mu |x|^{-2k}\hbox{ for all }x\in B_{\delta_2}(0),$$
then for any $\varphi\in H_k^2(B_{\delta_2}(0))\cap H_{2k,loc}^s(B_{\delta_2}(0)-\{0\})$ (for some $s>1$)  such that
$$P\varphi-V\cdot\varphi=0\hbox{ weakly in }H_k^2(B_{\delta_2}(0)),$$
then we have that
\begin{equation}\label{APP:GREEN:ineq:lem}
|x|^{\gamma}|\varphi(x)|\leq C_0\cdot \Vert\varphi\Vert_{L^p(B_{\delta_2}(0))}\hbox{ for all }x\in B_{\delta_1}(0)-\{0\}.
\end{equation}
and
\begin{equation*}
\Vert \varphi\Vert_{H_k^2(B_{\delta_1}(0))}\leq C_0\cdot \Vert\varphi\Vert_{L^p(B_{\delta_2}(0))}.
\end{equation*}
Moreover, for any $0< l<2k$, there exists $C_l=C_l(\gamma,p, L,\delta_1,\delta_2)>0$ such that
\begin{equation}\label{APP:GREEN:ineq:lem:l}
|x|^{\gamma+l}|\nabla^l\varphi(x)|\leq C_l\cdot \Vert\varphi\Vert_{L^p(B_{\delta_2}(0))}\hbox{ for all }x\in B_{\delta_1}(0)-\{0\}
\end{equation}
\end{lemma}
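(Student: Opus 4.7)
\medskip\noindent\textbf{Proof plan.} The strategy is a scaling/contradiction argument that reduces the weighted estimate \eqref{APP:GREEN:ineq:lem} to a Liouville-type rigidity for polyharmonic functions on $\rn\setminus\{0\}$. First observe that the bound is automatic away from the origin: on the annulus $\{\delta_1/2\leq |x|<\delta_1\}$ the potential satisfies $\Vert V\Vert_{L^\infty}\leq \mu(\delta_1/2)^{-2k}$, so ADN interior regularity (Theorem \ref{APP:GREEN:th:2:again}) applied at scale $\sim\delta_1$ produces $\Vert \varphi\Vert_{L^\infty(B_{\delta_1}(0)\setminus B_{\delta_1/2}(0))}\leq C\Vert\varphi\Vert_{L^p(B_{\delta_2}(0))}$. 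The real content of the lemma therefore concerns $x$ close to $0$, where the Hardy-type behaviour of $V$ is not directly absorbed by elliptic regularity.

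Suppose by contradiction that no pair $(\mu,C_0)$ works. Taking $\mu_j=1/j$ forces, for each $j$, the existence of an operator $P_j=\Delta^k+h_j$ with $\Vert h_j\Vert_{\infty}\leq L$, a potential $V_j$ with $|V_j(x)|\leq \mu_j|x|^{-2k}$, and a weak solution $\varphi_j$ of $P_j\varphi_j-V_j\varphi_j=0$ with $\Vert\varphi_j\Vert_{L^p(B_{\delta_2}(0))}=1$ and
$$M_j:=\sup_{x\in B_{\delta_1}(0)\setminus\{0\}}|x|^\gamma |\varphi_j(x)|\;\to\;+\infty.$$
By the preliminary remark, any near-maximizer $x_j$ must satisfy $r_j:=|x_j|\to 0$. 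Rescale by
$$\psi_j(y):=\frac{r_j^\gamma}{M_j}\varphi_j(r_jy),\qquad y\in B_{\delta_2/r_j}(0)\setminus\{0\}.$$
Three facts follow directly from the definitions: the uniform pointwise bound $|y|^\gamma|\psi_j(y)|\leq 1$ on $B_{\delta_1/r_j}(0)\setminus\{0\}$; the non-degeneracy $|\psi_j(x_j/r_j)|\geq 1/2$ at a unit vector; and that $\psi_j$ solves
$$\Delta^k\psi_j+r_j^{2k}h_j(r_j\cdot)\psi_j-r_j^{2k}V_j(r_j\cdot)\psi_j=0,$$
where $|r_j^{2k}h_j(r_j\cdot)|\leq Lr_j^{2k}\to 0$ uniformly and $|r_j^{2k}V_j(r_jy)|\leq \mu_j|y|^{-2k}\to 0$ locally uniformly on $\rn\setminus\{0\}$.

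On every annulus $B_R(0)\setminus B_{1/R}(0)$ the coefficients of this equation are uniformly bounded in $L^\infty$ and $\Vert\psi_j\Vert_{L^\infty}\leq R^\gamma$, so ADN interior estimates (Theorem \ref{APP:GREEN:th:2:again}) followed by Sobolev embedding yield uniform $C^{2k-1,\nu}$ bounds. A diagonal Ascoli extraction produces $\psi_j\to\psi$ in $C^{2k-1}_{loc}(\rn\setminus\{0\})$, with $\Delta^k\psi=0$ on $\rn\setminus\{0\}$, $|\psi(y)|\leq |y|^{-\gamma}$ globally, $\psi(y)\to 0$ as $|y|\to\infty$, and $|\psi(\xi_\infty)|\geq 1/2$ for some $|\xi_\infty|=1$. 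Since $\gamma<n$, $\psi\in L^1_{loc}(\rn)$, so the distributional extension of $\Delta^k\psi$ to $\rn$ is supported at $\{0\}$, hence equals $\sum_{|\alpha|\leq N}c_\alpha\partial^\alpha\delta_0$. Writing $E(y)=c_{n,k}|y|^{2k-n}$ for the fundamental solution of $\Delta^k$, the difference $\psi-\sum_\alpha c_\alpha\partial^\alpha E$ is polyharmonic on all of $\rn$, vanishes at infinity, hence is identically zero by the Liouville theorem for $\Delta^k$. But each non-zero term $\partial^\alpha E$ blows up at $0$ like $|y|^{2k-n-|\alpha|}$, with exponent $-(n-2k+|\alpha|)\leq -(n-2k)<-\gamma$, violating $|\psi(y)|\leq |y|^{-\gamma}$; so all $c_\alpha=0$, $\psi\equiv 0$, contradicting $|\psi(\xi_\infty)|\geq 1/2$.

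This establishes \eqref{APP:GREEN:ineq:lem}. The derivative estimate \eqref{APP:GREEN:ineq:lem:l} follows by rescaling to $B_{|x|/4}(x)$, on which $|h|+|V|\leq L+\mu(|x|/2)^{-2k}$ yields uniformly bounded coefficients at this scale, so ADN estimates for derivatives turn $|\varphi|\leq C|x|^{-\gamma}$ into $|\nabla^l\varphi(x)|\leq C|x|^{-\gamma-l}$. The $H_k^2$ bound is a standard Caccioppoli-type inequality: test the equation against $\eta^{2k}\varphi$ for a cutoff $\eta\equiv 1$ on $B_{\delta_1}(0)$, integrate by parts, and invoke the Hardy inequality \eqref{APP:GREEN:hardy:ineq:intro} so that for $\mu$ small the $V\varphi^2$ contribution is absorbed into the coercive left-hand side. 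The main obstacle is the Liouville/rigidity step: verifying that the distributional decomposition of $\Delta^k\psi$ across the puncture, combined with $\psi(y)\to 0$ at infinity, actually forces $\psi\equiv 0$ in the sharp range $\gamma<n-2k$; everything else is scaling plus routine ADN regularity.
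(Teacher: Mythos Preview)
Your proof is correct and shares the paper's overall scaffolding: argue by contradiction, rescale at a (near-)maximizer $x_j$ with $r_j=|x_j|\to 0$, extract a limit $\psi\in C^{2k}(\rn\setminus\{0\})$ satisfying $\Delta^k\psi=0$, $|\psi(y)|\leq |y|^{-\gamma}$, $\psi\not\equiv 0$, and then derive a contradiction from a Liouville-type rigidity. Two points differ. First, the paper opens with a reduction to potentials vanishing identically near $0$ (so $V\in L^\infty$ and the supremum is genuinely attained), and only afterwards recovers the general case by approximating $V$ by $\eta(|x|/\eps)V$ and passing to the limit; you bypass this by working with near-maximizers, which is legitimate once one notes that elliptic bootstrap on annuli makes each $\varphi_j$ continuous on $B_{\delta_2}(0)\setminus\{0\}$. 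Second, and more substantively, the rigidity step is carried out differently. You invoke the structure theorem for distributions supported at a point to write $\Delta^k\psi=\sum_{|\alpha|\leq N}c_\alpha\partial^\alpha\delta_0$, subtract $\sum c_\alpha\partial^\alpha E$, kill the entire polyharmonic remainder by Liouville (it decays at infinity, hence is the zero polynomial), and then use that every nonzero homogeneous component of $\sum c_\alpha\partial^\alpha E$ blows up like $|y|^{2k-n-j}$ with $n-2k+j>n-2k>\gamma$ to force $\psi\equiv 0$. The paper instead integrates $\psi$ against the shifted fundamental solution $\Gamma_{X_0}(y)=C_{n,k}|y-X_0|^{2k-n}$ over $B_R(0)\setminus(B_{1/R}(0)\cup B_\eps(X_0))$, estimates each boundary integral explicitly, and sends $R\to\infty$, $\eps\to 0$ to obtain $\psi(X_0)=0$ directly. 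Your route is cleaner structurally and makes the sharpness of the threshold $\gamma<n-2k$ transparent; the paper's route is more elementary in that it avoids distribution theory entirely and stays within classical integration by parts.
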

For the reader's convenience, we set $\delta:=\delta_1$ and we assume that $\delta_2=3\delta_1=3\delta$. The general case follows the same proof by changing $2\delta$, $2.9\delta$, etc, into various radii $\delta',\delta^{\prime\prime},...$ such that $\delta_1<\delta'<\delta^{\prime\prime}<\delta_2$, etc. We split the proof of the Lemma in two steps.

\medskip\noindent{\bf Step 1: Proof of \eqref{APP:GREEN:ineq:lem} when $V\equiv 0$ around $0$.} 
We prove \eqref{APP:GREEN:ineq:lem} by contradiction under the assumption that $V$ vanishes around $0$. We assume that there exists $\gamma\in (0,n-2k)$, $p>1$, $L>0$, $\delta>0$ such that for all $\mu>0$, there exists a differential operator $P_\mu=\Delta^k+h_\mu$ and a potential $V_{\mu}\in L^1(B_{3\delta}(0))$ such that there exists $\varphi_{\mu}\in H_k^2(B_{3\delta}(0))\cap   H_{2k,loc}^s(B_{3\delta}(0)-\{0\})$ (for some $s>1$) such that
\begin{equation}\label{APP:GREEN:eq:75}
\left\{\begin{array}{l}
(P_\mu-V_\mu)\psi_\mu=0 \hbox{ weakly in }H_k^2(B_{3\delta}(0))\cap H_{2k,loc}^s(B_{3\delta}(0)-\{0\})\\
\Vert\psi_\mu\Vert_{L^p(B_{3\delta}(0)}=1\\
|V_{\mu}(x)|\leq \mu |x|^{-2k}\hbox{ for all }x\in B_{3\delta}(0)-\{0\}\\
V_{\mu}\equiv 0\hbox{ around }0\\
\sup_{x\in \overline{B_\delta(0)}} |x|^{\gamma}|\psi_\mu(x)|>\frac{1}{\mu}\to +\infty\hbox{ as }\mu\to 0
\end{array}\right\}
\end{equation} 
With our assumption that $V_\mu$ vanishes around $0$, we get that $V_\mu\in L^\infty(B_{3\delta}(0))$. Then, by regularity theory (see Theorem \ref{APP:GREEN:th:2:again}), we get that $\psi_\mu\in C^0(B_{2\delta}(0))$. Therefore, there exists $x_\mu\in \overline{B_\delta(0)}$ such that
\begin{equation}\label{APP:GREEN:lim:xlambda}
|x_\mu|^{\gamma}|\psi_\mu(x_\mu)|=\sup_{x\in \overline{B_\delta(0)}} |x|^{\gamma}|\psi_\mu(x)|>\frac{1}{\mu}\to +\infty
\end{equation}
as $\mu\to 0$. 

\medskip\noindent{\bf Step 1.1:} We claim that $\lim_{\mu\to 0}x_\mu=0$. 

\smallskip\noindent We prove the claim. For any $r>0$, we have that $|V_\mu(x)|\leq \mu r^{-2k}$ for all $x\in B_{3\delta}(0)\setminus B_r(0)$. So, with regularity theory (see Theorem \ref{APP:GREEN:th:2:again}), we get that for all $q>1$, then $\Vert \psi_\mu\Vert_{H_{2k}^q(B_{2\delta}(0)\setminus B_{2r}(0))}= C(r, q, L,p, )\Vert\psi_\mu\Vert_{L^p(B_{3\delta}(0)}\leq C(r,q, L,p)$. Taking $q>\frac{n}{2k}$, we get that $|\psi_\mu(x)|\leq C(r,q, L,p)$ for all $x\in B_{2\delta}(0)\setminus B_{2r}(0)$. With \eqref{APP:GREEN:lim:xlambda}, this forces $\lim_{\mu\to 0}x_\mu=0$. The claim is proved.

\medskip\noindent{\bf Step 1.2: Convergence after rescaling.} We set $r_\mu:=|x_\mu|>0$ and we define
\begin{equation*}
\tilde{\psi}_\mu(X):=\frac{\psi_\mu(r_\mu X)}{\psi_\mu(x_\mu)}\hbox{ for }X\in\rn-\{0\}\hbox{ such that }|X|<\frac{\delta}{r_\mu}.
\end{equation*}
We define $X_\mu\in \rn$ such that $x_\mu=r_\mu X_\mu$. In particular $|X_\mu|=1$. With the definition of $x_\mu$, for any $X\in \rn$ such that $0<|X|<\frac{\delta}{r_\mu}$, we have that
\begin{eqnarray*}
r_\mu^\gamma |X|^\gamma|\tpl(X)|&=& \frac{|r_\mu X|^\gamma |\psi_\mu(r_\mu X)|}{|\psi_\mu(x_\mu)|}\leq \frac{|x_\mu|^\gamma |\psi_\mu(x_\mu)|}{|\psi_\mu(x_\mu)|}=r_\mu^\gamma.
\end{eqnarray*}
Therefore, we get that
\begin{equation}\label{APP:GREEN:bnd:tpl}
|X|^\gamma |\tpl(X)|\leq 1\hbox{ for all }X\in \rn\hbox{ such that }0<|X|<\frac{\delta}{r_\mu}\hbox{ and }\tpl(X_\mu)=1.
\end{equation}
The equation satisfied by $\tpl$ in \eqref{APP:GREEN:eq:75} rewrites
\begin{equation}\label{APP:GREEN:eq:tpl:bis}
\Delta^k\tpl+r_\mu^{2k}h_\mu (r_\mu \cdot)\tpl-r_\mu^{2k}V_\mu(r_\mu X)\tpl=0
\end{equation}
weakly in $B_{3\delta/r_\mu}(0)-\{0\}$. Note that
\begin{equation}\label{APP:GREEN:bnd:Vl}
|r_\mu^{2k}V_\mu(r_\mu X)|\leq \mu |X|^{-2k}\hbox{ for all }\mu>0\hbox{ and }0<|X|<\frac{3\delta}{r_\mu}.
\end{equation}
With the bound \eqref{APP:GREEN:bnd:tpl} and the bounds of the coefficient $h_\mu$, it follows from regularity theory (see Theorem \ref{APP:GREEN:th:2:again}) that for any $R>0$ and any $0<\nu<1$, there exists $C(R)>0$ such that $\Vert \tpl\Vert_{C^{2k-1,\nu}(B_R(0)-B_{R^{-1}}(0))}\leq C(R,\nu)$  for all $\mu>0$. Ascoli's theorem yields the existence of $\tilde{\psi}\in C^{2k-1}(\rn-\{0\})$ such that $\tpl\to \tilde{\psi}$ in $C^{2k-1}_{loc}(\rn-\{0\})$ as $\mu\to 0$. Passing to the limit $\mu\to 0$ in \eqref{APP:GREEN:eq:tpl:bis}, we get that $\Delta^k\tilde{\psi}=0$ weakly in $\rn-\{0\}$ and regularity yields $\tilde{\psi}\in C^{2k}(\rn-\{0\})$. We define $X_0:=\lim_{\mu\to 0}X_\mu$, so that $|X_0|=1$. Finally, passing to the limit in \eqref{APP:GREEN:bnd:tpl} yields
 \begin{equation}\label{APP:GREEN:eq:psi}
\left\{\begin{array}{l}
\tilde{\psi}\in C^{2k}(\rn-\{0\})\\
\Delta^k\tilde{\psi}=0 \hbox{  in }\rn-\{0\}\\
\tilde{\psi}(X_0)=1\hbox{ with }|X_0|=1\\
|\tilde{\psi}(X)|\leq |X|^{-\gamma}\hbox{ for all }X\in \rn-\{0\}.
\end{array}\right\}
\end{equation} 
By standard elliptic theory (see Theorems \ref{APP:GREEN:th:2:again} and \ref{APP:GREEN:th:2:holder}), for any $l=1,...,2k$, there exists $C_l>0$ such that
\begin{equation}\label{APP:GREEN:bnd:nabla:psi}
|\nabla^l\tilde{\psi}(X)|\leq C_l |X|^{-\gamma-l}\hbox{ for all }X\in \rn-\{0\}.
\end{equation}
\noindent{\bf Step 1.3: Contradiction via Green's formula.} Let us consider the Poisson kernel of $\Delta^k$ at $X_0$, namely
$$\Gamma_{X_0}(X):=C_{n,k}|X-X_0|^{2k-n}\hbox{ for all }X\in \rn-\{X_0\},$$
where
\begin{equation}\label{APP:GREEN:def:Cnk}
C_{n,k}:=\frac{1}{(n-2)\omega_{n-1}\Pi_{i=1}^{k-1}(n-2k+2(i-1))(2k-2i)}.
\end{equation}
Let us choose $R>3$ and $0<\eps<1/2$ and define the domain
$$\Omega_{R,\eps}:=B_R(0)\setminus\left(B_{R^{-1}}(0)\cup B_{\eps}(X_0)\right).$$
Note that all the balls involved here have boundaries that do not intersect. With \eqref{formula:ipp}, we get
\begin{equation}\label{APP:GREEN:ipp:gamma}
\int_{\Omega_{R,\eps}}(\Delta^k\Gamma_{X_0})\tilde{\psi}\, dX=\int_{\Omega_{R,\eps}}\Gamma_{X_0}(\Delta^k\tilde{\psi})\, dX+\int_{\partial\Omega_{R,\eps}}\sum_{i=0}^{k-1}\mathcal{B}^{(i)}(\Gamma_{X_0},\tilde{\psi})\, d\sigma
\end{equation}
where the $\mathcal{B}^{(i)}$ are as in \eqref{def:B}. We have that $\partial\Omega_{R,\eps}=\partial B_R(0)\cup \partial B_{R^{-1}}(0)\cup \partial B_{\eps}(X_0)$. Using that $\Gamma_{X_0}$ is smooth at $0$, that $\tilde{\psi}$ is smooth at $X_0$, using the bounds \eqref{APP:GREEN:bnd:nabla:psi} and the corresponding ones for $\Gamma_{X_0}$, for any $i=0,...,k-1$, we get that
\begin{eqnarray*}
&&\left|\int_{\partial B_R(0)}\mathcal{B}^{(i)}(\Gamma_{X_0},\tilde{\psi})\, d\sigma\right| 
\leq CR^{-\gamma}\, ,\, \left|\int_{\partial B_{\eps}(X_0)}\mathcal{B}^{(i)}(\Gamma_{X_0},\tilde{\psi})\, d\sigma\right| \leq  
C\eps^{2i}
\end{eqnarray*}
\begin{eqnarray*}
\left|\int_{\partial B_{R^{-1}}(0)}\mathcal{B}^{(i)}(\Gamma_{X_0},\tilde{\psi})\, d\sigma\right| 
&\leq &CR^{2-n+\gamma+2i}\leq C R^{-(n-2k-\gamma)}
\end{eqnarray*}
and
$$\left|\int_{\partial B_{\eps}(X_0)}\Delta ^{k-1}\Gamma_{X_0}\partial_\nu\tilde{\psi}\, d\sigma\right|\leq C \eps^{n-1}\eps^{2k-n-2(k-1)}\leq C\eps.$$
Therefore, since $0<\gamma<n-2k$, all the terms involving $R$ go to $0$ as $R\to +\infty$, the terms involving $\eps$ go to $0$ when $i\neq 0$. Since $\Delta^k\Gamma_{X_0}=0$, $\Delta^k\tilde{\psi}=0$, it follows from \eqref{APP:GREEN:ipp:gamma} and the inequalities above that
$$\int_{\partial B_{\eps}(X_0)}\partial_\nu\Delta ^{k-1}\Gamma_{X_0} \tilde{\psi}\, d\sigma=o(1)\hbox{ as }\eps\to 0.$$
With the definition of $\Gamma_{X_0}$, we get that 
\begin{equation}\label{APP:GREEN:calc:delta:gamma}
-\partial_\nu\Delta^{k-1}\Gamma_{X_0}(X)=\frac{1}{\omega_{n-1}}|X-X_0|^{1-n}\hbox{ for }X\neq X_0.
\end{equation}
So that, with a change of variable, we get that
$$\int_{\partial B_1(0)} \tilde{\psi}(X_0+\eps X)\, d\sigma=o(1)\hbox{ as }\eps\to 0.$$
Passing to the limit, we get that $\tilde{\psi}(X_0)=0$, which is a contradiction with \eqref{APP:GREEN:eq:psi}. This proves \eqref{APP:GREEN:ineq:lem} when $V$ vanishes around $0$.

\medskip\noindent{\bf Step 2: The general case.} Let $\eta\in C^\infty(\rr)$ be such that $\eta(t)=0$ if $t\leq 1$, $\eta(t)=1$ if $t\geq 2$ and $0\leq\eta\leq 1$. For any $\eps>0$, define $V_\eps(x):=\eta(|x|/\eps)V(x)$ for all $x\in B_{\delta_2}(0)$. Up to taking $\delta_2>0$ small enough to get coercivity, for any $\eps>0$, there exists  $\varphi_\eps\in H_k^2(B_{\delta_2}(0))\cap H_{2k}^q(B_{\delta_2}(0))$ for all $q>1$  such that
\begin{equation}\label{APP:GREEN:eq:phi:eps}
\left\{\begin{array}{ll}
(P-V_\eps)\varphi_\eps=0&\hbox{ in }B_{\delta_2}(0)\\
\partial_\nu^i\varphi_\eps=\partial_\nu^i\varphi&\hbox{ on }\partial B_{\delta_2}(0)\hbox{ for }i=0,\cdots,k-1\\
\end{array}\right.
\end{equation}
As one checks, $\lim_{\eps\to 0}\varphi_\eps=\varphi$ in $H_k^2(B_{\delta_2}(0))$ and $\lim_{\eps\to 0}\varphi_\eps(x)=\varphi(x)$ for all $x\in \bar{B}_{\delta_1}(0)- \{0\}$. Since $V_\eps$ vanishes around $0$, we  apply  \eqref{APP:GREEN:eq:psi} to $\varphi_\eps$ and let $\eps\to 0$. We leave the details to the reader. The estimates on the derivatives are consequence of elliptic theory.

\subsection{Green's function for elliptic operators with bounded coefficients}\label{APP:GREEN:app:green}
\begin{defi}\label{APP:GREEN:def:green} Let $\Omega$ be a smooth bounded domain of $\rn$. Fix $k\in\nn$ such that $n>2k\geq 2$. Let $P$ be an elliptic operator of order $2k$. A \emph{Green's function} for $P$ is a function $(x,y)\mapsto G(x,y)=G_x(y)$ defined for all $x\in \O$ and a.e. $y\in \O$ such that 

\begin{itemize}
\item[(i)] $G_x\in L^1(\O)$ for all $x\in \O$,
\item[(ii)] for all $x\in \O$ and all $\varphi\in C^{2k}(\Obar)$ such that $\partial_\nu^i\varphi_{|\partial\Omega}=0$ for all $i=0,..,k-1$, we have that
$$\int_\Omega G_xP\varphi\, dx=\varphi(x).$$
\end{itemize}
\end{defi}

\begin{theorem}\label{APP:GREEN:th:green:nonsing} Let $\Omega$ be a smooth bounded domain of $\rn$, $n\geq 2$. Fix $k\in\nn$ such that $n>2k\geq 2$ and $L>0$. Let $P$ be an elliptic operator such that \eqref{APP:GREEN:def:okl} holds. Then there exists a unique Green's function for $P$. Moreover,

\begin{itemize}
\item $G$ extends to $\O\times \O\setminus\{(x,x)/x\in \O\}$ and for any $x\in \O$, $G_x\in H_{k,0,loc}^2(\O-\{x\})\cap H_{2k,loc}^p(\O-\{x\})$ for all $p>1$ and $G_x\in C^{2k-1}(\Obar-\{x\})$
\item $G$ is symmetric;
\item For all $x\in \O$, we have that
\begin{equation*}
\left\{\begin{array}{cc}
PG_x=0&\hbox{ in }\O\setminus\{x\}\, \\
{\partial_{\nu}^iG_x}_{|\partial\O}=0&\hbox{ for }i=0,...,k-1.
\end{array}\right\}
\end{equation*}
\item For all $f\in L^p(\O)$, $p>\frac{n}{2k}$, and $\varphi\in H_{2k}^p(\O)\cap H_{k,0}^p(\O)$ such that $P\varphi=f$ weakly, then 
$$\varphi(x)=\int_\O G_xP\varphi\, dx\hbox{ for all }x\in \O.$$
\item For all $\varphi\in C^{2k}(\Obar)$, we have that 
$$\varphi(x)=\int_\O G_xP\varphi\, dy-\int_{\partial\O}C_{P}(\varphi,G_x)\, d\sigma\hbox{ for all }x\in \O.$$
where
$$C_{P}(\varphi,G_x):=-\sum_{2i+1\leq k-1}\partial_\nu\Delta^{i}\varphi\Delta^{k-1-i}G_x+ \sum_{2i\leq k-1}\Delta^{i}\varphi\partial_\nu\Delta^{k-1-i}G_x.$$
If $\partial_\nu^i\varphi=0$ on $\partial\O$ for all $i=0,...,k-1$, then $C_{P}(\varphi,G_x)\equiv 0$ on $\partial\O$.

\item For all $\omega\subset\subset\O$, There exists $C(k,L,\omega)>0$ such that
$$|G_x(y)|\leq C(k,L,\omega)\cdot |x-y|^{2k-n}\hbox{ for all }x\in\omega,\,y\in \Omega, \, x\neq y,$$
\item For all $l=1,...,2k-1$, there exists $C_l(k,L,\omega)>0$ such that
$$|\nabla^l G_x(y)|\leq C_l(k,L,\omega)\cdot |x-y|^{2k-n-l}\hbox{ for all }x\in\omega,\,y\in \Omega, \, x\neq y;$$
\end{itemize}
\end{theorem}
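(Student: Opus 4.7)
My plan is to construct $G_x$ through a duality argument based on Lax--Milgram and ADN regularity, and then to extract the pointwise bound from a parametrix comparison with the fundamental solution of $\Delta^k$ on $\rn$. The coercivity hypothesis \eqref{APP:GREEN:def:okl} makes $a(u,v):=\int_\O\Delta^{k/2}u\,\Delta^{k/2}v+huv\,dx$ a continuous coercive bilinear form on $\hundeux$, so Lax--Milgram yields a bounded inverse $P^{-1}:(\hundeux)^*\to\hundeux$. Theorem~\ref{APP:GREEN:th:2:again} upgrades this to $P^{-1}:L^p(\O)\to H_{2k}^p(\O)\cap H_{k,0}^p(\O)$ for every $p>1$. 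For $p>n/(2k)$, Sobolev embedding $H_{2k}^p\hookrightarrow C^0(\overline{\O})$ makes the point evaluation $f\mapsto (P^{-1}f)(x)$ a continuous linear functional on $L^p(\O)$ for each fixed $x\in\O$, and Riesz representation provides $G_x^{(p)}\in L^{p'}(\O)$ realizing it. By density the $G_x^{(p)}$ agree across $p$ and define a single $G_x\in\bigcap_{q<n/(n-2k)}L^q(\O)$ satisfying $\int_\O G_xP\varphi\,dy=\varphi(x)$ for every $\varphi\in C^\infty_c(\O)$, which is exactly Definition~\ref{APP:GREEN:def:green}. Uniqueness follows from surjectivity of $P:H_{2k}^q\cap H_{k,0}^q\to L^q$: any $H_x\in L^1$ pairing to zero against all $P\varphi$ pairs to zero against all of $L^q$. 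Interior ADN applied to the homogeneous equation $PG_x=0$ on $\O\setminus\{x\}$ (via Theorems~\ref{APP:GREEN:th:2:again}--\ref{APP:GREEN:th:2:holder}), combined with the vanishing Dirichlet trace inherited from the smoothed approximations $g_{x,\eps}:=P^{-1}f_{x,\eps}$ with $f_{x,\eps}\to\delta_x$ smoothly, delivers the claimed regularity $G_x\in H_{2k,loc}^p(\O\setminus\{x\})\cap H_{k,0,loc}^2(\O\setminus\{x\})\cap C^{2k-1}(\overline{\O}\setminus\{x\})$ and the Dirichlet boundary conditions.

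\medskip

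\textbf{Pointwise bound.} This is the core technical step. I would use a parametrix ansatz $G_x=\eta_x\Phi_x+\psi_x$, where $\Phi_x(y):=C_{n,k}|x-y|^{2k-n}$ is the fundamental solution of $\Delta^k$ on $\rn$ and $\eta_x\in C^\infty_c(\O)$ is a cutoff equal to $1$ on a fixed neighborhood of $x$, with derivatives supported at fixed positive distance from $x$ uniformly in $x\in\omega$. A direct computation gives $P(\eta_x\Phi_x)=\delta_x+E_x+h\eta_x\Phi_x$, where $E_x:=[\Delta^k,\eta_x]\Phi_x\in L^\infty(\O)$ uniformly in $x\in\omega$. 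Hence $\psi_x\in\hundeux$ solves $P\psi_x=-E_x-h\eta_x\Phi_x$, whose right-hand side lies in $L^q(\O)$ for every $q<n/(n-2k)$. Lax--Milgram for an admissible $q$ together with the ADN $L^p$--$L^\infty$ estimates, iterated through a finite chain of Sobolev gains, yields $\psi_x\in L^\infty(\omega')$ for any $\omega'\subset\subset\O$, uniformly in $x\in\omega$. The triangle inequality produces $|G_x(y)|\leq|\eta_x(y)\Phi_x(y)|+|\psi_x(y)|\leq C(k,L,\omega)|x-y|^{2k-n}$. The derivative bounds follow by differentiating the parametrix inside $\omega$ and applying interior ADN/Schauder estimates to $PG_x=0$ on rescaled annuli of size $\sim|x-y|$, where, after rescaling, the zeroth-order coefficient is uniformly small.

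\medskip

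\textbf{Symmetry and representation.} For symmetry, observe $a(g_{x_1,\eps_1},g_{x_2,\eps_2})=\int g_{x_1,\eps_1}f_{x_2,\eps_2}\,dy=\int g_{x_2,\eps_2}f_{x_1,\eps_1}\,dy$ and pass to the limit in $\eps_1,\eps_2\to 0$ using the pointwise bound to obtain $G_{x_1}(x_2)=G_{x_2}(x_1)$. The representation formula for general $\varphi\in C^{2k}(\overline{\O})$ follows from integrating $\int_{\O\setminus B_\eps(x)}G_xP\varphi\,dy$ by parts through the polyharmonic Green identity \eqref{formula:ipp} and letting $\eps\to 0$: the annular boundary term on $\partial B_\eps(x)$ converges to $\varphi(x)$ by the parametrix, and the remaining contributions on $\partial\O$ collect into $C_P(\varphi,G_x)$, vanishing under homogeneous Dirichlet data on $\varphi$. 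The main obstacle is the bootstrap loop in the pointwise estimate: the borderline integrability of $h\eta_x\Phi_x$, which sits between $L^{2n/(n+2k)}$ and $L^{n/(n-2k)}$ and may straddle the $L^p$--$L^\infty$ threshold $p=n/(2k)$ depending on the ratio $n/k$, must be upgraded step-by-step through a careful iteration of ADN inequalities along the Sobolev ladder, uniform in $x\in\omega$.
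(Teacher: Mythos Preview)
Your functional-analytic construction of $G_x$ via Lax--Milgram, ADN regularity, and Riesz representation is a valid alternative to the paper's explicit Neumann-series construction, and your arguments for uniqueness, symmetry, and the representation formula are correct.

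However, the pointwise bound has a genuine gap when $n\geq 4k$. With a single parametrix step $G_x=\eta_x\Phi_x+\psi_x$, the remainder satisfies $P\psi_x=-E_x-h\eta_x\Phi_x$, and the singular part $h\eta_x\Phi_x$ lies only in $L^q(\O)$ for $q<n/(n-2k)$. When $n\geq 4k$ this threshold is at most $n/(2k)$, so Theorem~\ref{APP:GREEN:th:3} gives $\psi_x\in H_{2k}^q$ but \emph{not} $\psi_x\in L^\infty$. There is no Sobolev ladder to climb here: the equation $P\psi_x=g$ is linear with a \emph{fixed} right-hand side, so the integrability of the source does not improve with that of $\psi_x$, and no iteration of ADN on this equation can take you past $H_{2k}^q$. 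The paper resolves this precisely by iterating the parametrix itself: it sets $\Gamma_1(x,\cdot):=f_x$ with $|f_x(y)|\leq C|x-y|^{2k-n}$, then $\Gamma_{i+1}(x,y):=\int_\O\Gamma_i(x,z)f_z(y)\,dz$, and Giraud's convolution lemma yields $|\Gamma_i(x,y)|\leq C|x-y|^{2ki-n}$. After $p>n/(2k)$ steps the residual $\Gamma_{p+1}(x,\cdot)$ is uniformly bounded, so the final correction $u_x$ solves $Pu_x=\Gamma_{p+1}(x,\cdot)\in L^\infty$ and is itself in $L^\infty$ by ADN; the pointwise bound $|G_x(y)|\leq C|x-y|^{2k-n}$ then drops out of the explicit formula \eqref{APP:GREEN:def:G}. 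Your ansatz is essentially the $p=0$ case of this scheme; what is missing is the finite Neumann iteration, which trades decay of the source for decay of the solution through Giraud's lemma rather than through a bootstrap on $\psi_x$.
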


\smallskip\noindent The sequel of this subsection is devoted to the proof of Theorem \ref{APP:GREEN:th:green:nonsing}. We build the Green's function via the classical Neumann series following Robert \cite{robert:green}. Let $\eta\in C^\infty(\rr)$ be such that $\eta(t)=1$ if $t\leq 1/4$ and $\eta(t)=0$ if $t\geq 1/2$. We  define
\begin{equation*}
\Gamma_x(y)=\Gamma(x,y):=C_{n,k}|x-y|^{2k-n}\hbox{ for all }x,y\in \O,\; x\neq y.
\end{equation*}
where $C_{n,k}$ is defined in \eqref{APP:GREEN:def:Cnk}. Note that $\Gamma_x\in C^\infty(\Obar-\{x\})$.

\smallskip\noindent{\bf Step 1:} As in the proof of Step 1.3, see formula \eqref{APP:GREEN:ipp:gamma}, for all $x\in \O$, there exists $f_x\in L^1(\O)$ such that
\begin{equation}\label{APP:GREEN:lap:gamma}
\left\{\begin{array}{cc}
P\Gamma_x=\delta_x-f_x&\hbox{ weakly in  }\O\\
|f_x(y)|\leq C(k,L) |x-y|^{2k-n}&\hbox{ for all }x,y\in \O,\, x\neq y,
\end{array}\right\}
\end{equation} 
Where the equality is to be taken in the distribution sense, that is 
$$\int_\O \Gamma_x P\varphi\, dx=\varphi(x)-\int_\O f_x\varphi\, dx+\int_{\partial \O}\sum_{i=0}^{k-1}\mathcal{B}^{(i)}(\varphi,\Gamma_x)\, d\sigma\hbox{ for all }\varphi\in C^{2k}(\Obar),$$
where the $\mathcal{B}^{(i)}$'s are defined in \eqref{def:B} and where $f_x:=-(\Delta^k\Gamma_x +h\Gamma_x)$.

\smallskip\noindent{\bf Step 2:} We are now in position to define the Green's function $G$. We define
\begin{equation*}
\left\{\begin{array}{cc}
\Gamma_1(x,y):=f_x(y) &\hbox{ for }x,y\in \Omega,\, x\neq y,\\
\Gamma_{i+1}(x,y):=\int_\O \Gamma_i(x,z)f_z(y)\, dz&\hbox{ for }x,y\in \O,\, x\neq y,\, i\in\nn
\end{array}\right\}
\end{equation*}
With straightforward computations (Giraud's Lemma \cite{giraud}, as stated in \cite{DHR} for instance), the definition of $\Gamma$ and \eqref{APP:GREEN:lap:gamma}, for all $i\in\nn$, we have that
\begin{equation}\label{APP:GREEN:ctrl:gamma:i}
|\Gamma_i(x,y)|\leq C_i(k,L,\Omega) \left\{\begin{array}{cc}
|x-y|^{2ki-n} &\hbox{ if }2ki<n;\\
1+|\ln |x-y||&\hbox{ if }2ki=n;\\
1 &\hbox{ if }2ki>n.\end{array}\right.
\end{equation}
for all $x,y\in\Omega$, $x\neq y$. We then get that $\Gamma_i(x,\cdot)\in L^\infty( \O)$ for all $x\in \Omega$ and $i>\frac{n}{2k}$. We fix $p>n/k$. For $x\in \O$, we take $u_x\in H_{2k}^2(\O)\cap C^{2k-1}(\Obar)$ that will be fixed later, and we define
\begin{equation}\label{APP:GREEN:def:G}
G_x(y):=\Gamma_x(y)+\sum_{i=1}^p\int_\O\Gamma_i(x,z)\Gamma(z,y)\, dz+ u_x(y)\hbox{ for a.e }y\in \O.
\end{equation} We fix $\varphi\in C^{2k}(\Obar)$. Via Fubini's theorem, using  the definition of the $\Gamma_i$'s and the definition of $P$, we get that
\begin{eqnarray*}
&&\int_\O G_x P\varphi\, dy = \int_\O \Gamma_x P\varphi\, dy +\sum_{i=1}^p\int_{\O\times\O} \Gamma_i(x,z)\Gamma(z,y)P\varphi(y)\, dzdy\\
&& +\int_\O Pu_x\varphi\, dy +\int_{\partial \O}\sum_{i=0}^{k-1}\mathcal{B}^{(i)}(\varphi,u_x)\, d\sigma\\
&&= \varphi(x)-\int_\O \Gamma_1(x,\cdot)\varphi\, dx +\sum_{i=1}^p\int_{\O} \Gamma_i(x,z) \varphi(z) \,dz\\
&&-\sum_{i=1}^p\int_{\O }\left(\int_\O \Gamma_i(x,z)   f_z(y)\, dz\right)\varphi(y)\,  dy\\
&& +\int_\O Pu_x\varphi\, dx+\int_{\partial \O}\sum_{i=0}^{k-1}\mathcal{B}^{(i)}(\varphi,G_x)\, d\sigma\\
&&= \varphi(x)  +\int_{\O}(Pu_x-\Gamma_{p+1}(x,\cdot))\varphi\,  dy +\int_{\partial \O}\sum_{i=0}^{k-1}\mathcal{B}^{(i)}(\varphi,G_x)\, d\sigma
\end{eqnarray*}
Since $\Gamma_{p+1}(x,\cdot)\in L^\infty(\O)$, we choose $u_x\in \cap_{q>1}H_{2k}^q(\O)\cap H_{k,0}^q(\O) $  such that
$$\left\{\begin{array}{cc}
Pu_x=\Gamma_{p+1}(x,\cdot)&\hbox{  in }\O.\\
\partial_\nu^i u_x=-\partial_\nu^i\left(\Gamma_x+\sum_{i=1}^p\int_\O\Gamma_i(x,z)\Gamma(z,\cdot)\, dz\right)&\hbox{ on }\partial\O
\end{array}\right\}$$
The existence follows from Theorem \ref{APP:GREEN:th:3}. Sobolev's embedding theorem yields $u_x\in C^{2k-1}(\Obar)$ and Theorem \ref{APP:GREEN:th:3} yields  $ C( k,L,p,\omega)>0$ such that
\begin{equation}\label{APP:GREEN:ctrl:u}
|u_x(y)|\leq C(k,L,p,\omega)\hbox{ for all }x\in\omega,\,y\in \Omega.
\end{equation}
In particular, $G_x\in C^{2k-1}(\Obar\setminus\{x\})$ and $\partial_\nu^iG_x=0$ on $\partial\O$ and $i=0,...,k-1$. Finally, we get that 
\begin{equation}\label{APP:GREEN:id:green}
\int_\Omega G_x P\varphi\, dy=\varphi(x)+\int_{\partial \O}\sum_{i=0}^{k-1}\mathcal{B}^{(i)}(\varphi,G_x)\, d\sigma\hbox{ for all }\varphi\in C^{2k}(\Obar).
\end{equation}
Note that since $\partial_\nu^iG_x=0$ on $\partial\O$ and $i=0,...,k-1$, then $\nabla^iG_x=0$ on $\partial\O$ for $i=0,...,k-1$ and then  we have that
\begin{eqnarray*}
\sum_{i=0}^{k-1}\mathcal{B}^{(i)}(\varphi,G_x)&=&  -\sum_{2i+1\leq k-1}\partial_\nu\Delta^{i}\varphi\Delta^{k-1-i}G_x+ \sum_{2i\leq k-1}\Delta^{i}\varphi\partial_\nu\Delta^{k-i-1} G_x
\end{eqnarray*}
The  controls \eqref{APP:GREEN:ctrl:gamma:i} and \eqref{APP:GREEN:ctrl:u}, the definition \eqref{APP:GREEN:def:G} and Giraud's Lemma yield
\begin{equation}\label{APP:GREEN:ctrl:G}
\begin{array}{c}
|G_x(y)|\leq C( k, L,\omega)|x-y|^{2k-n}\\
\end{array}\ \hbox{ for all }x\in\omega,y\in \Omega, \, x\neq y.
\end{equation}
This proves the existence of a Green's function for $P$. Moreover, the construction yields $G_x\in H_{2k,loc}^p (\O-\{x\})\cap H_{k,0, loc}^p (\O-\{x\})$ for all $p>1$ and $P G_x=0$  in $\Omega-\{x\}$. The validity of \eqref{APP:GREEN:id:green} for $u\in H_{2k}^p(\O)\cap H_{k,0}^p(\O)$ and $f\in L^p(\O)$ such that $Pu=f$ and $p>n/(2k)$ follows by density of $C^\infty_c(\O)$ in $L^p(\O)$ and the regularity Theorem \ref{APP:GREEN:th:3}. The symmetry of $G$ follows from the self-adjointness of the operator $P$. The uniqueness goes as the proof of uniqueness of Theorem \ref{APP:GREEN:th:Green:main}. The pointwise control for $|G_x(y)|$ is  \eqref{APP:GREEN:ctrl:G}. The control of the gradient of $G_x$ is a consequence of elliptic theory. Since the details of these points are exactly the same as in the case of a second-order operator $\Delta+h$, we refer to the detailed construction \cite{robert:green}.
\section{Regularity theorems}\label{APP:GREEN:sec:regul:adn}
The following theorems are reformulations  of Agmon-Douglis-Nirenberg \cite{ADN}.

\begin{theorem}\label{APP:GREEN:th:2:again} We fix $k\in\nn$, $L>0$ and $\delta>0$. Let $\O$ be a smooth domain of $\rn$, $n>2k\geq 2$ and $x_0\in\Obar=\O\cup\partial\Omega$. Let $P=\Delta^k+h$ be a differential operator such that $h\in L^\infty(\O\cap B_\delta(x_0))$ and $\Vert h\Vert_\infty\leq L$. Let $u\in H_{2k}^s(\Omega\cap B_\delta(x_0))$ be such that $\eta u\in H_{k,0}^s(\O)$ for all $\eta\in C^\infty_c(B_\delta(x_0))$ and $f\in L^p(\O\cap B_\delta(x_0))$, $p,s\in (1,+\infty)$ be such that $Pu=f$. Then for all $r<\delta$, $u\in H_{2k}^p(\Omega\cap B_r(x_0))$. Moreover, for all $q>1$, we have that
$$\Vert u\Vert_{H_{2k}^p(\O\cap B_r(x_0))}\leq C(n,\O,k,L,p,q,\delta,r)\left(\Vert f\Vert_{L^p(\O\cap B_\delta(x_0))}+\Vert u\Vert_{L^q(\O\cap B_\delta(x_0))}\right)$$
where $C(n,\O,k,L,p,q,\delta,r)$ depends only on $n$, $\O$, $k$, $L$, $p$, $q$, $\delta$ and $r$.
\end{theorem}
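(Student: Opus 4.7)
The plan is to reduce to the standard interior/boundary $L^p$-estimates of Agmon-Douglis-Nirenberg \cite{ADN} for the constant-coefficient operator $\Delta^k$ with the Dirichlet complementing conditions $\partial_\nu^i v=0$, $i=0,\ldots,k-1$, and then absorb the $hu$ term by bootstrapping the integrability of $u$. The ADN building block I would use asserts: for every $q\in(1,\infty)$ and every pair $r<\rho<\delta$, any $v\in H_{2k}^q(\Omega\cap B_\rho(x_0))$ for which $\eta v\in H_{k,0}^q(\Omega)$ for all $\eta\in C^\infty_c(B_\rho(x_0))$ satisfies
\begin{equation*}
\|v\|_{H_{2k}^q(\Omega\cap B_r(x_0))}\leq C(n,\Omega,k,q,\rho,r)\bigl(\|\Delta^k v\|_{L^q(\Omega\cap B_\rho(x_0))}+\|v\|_{L^q(\Omega\cap B_\rho(x_0))}\bigr).
\end{equation*}
This follows from the half-space ADN estimate by flattening $\partial\Omega$ and multiplying by a cutoff; the commutator $[\Delta^k,\eta]v$ is of order at most $2k-1$ in $v$ and is absorbed by an interpolation-plus-$\varepsilon$ argument between $H_{2k}^q$ and $L^q$.

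Granting this estimate, I would argue by iteration on integrability. Writing $Pu=f$ as $\Delta^k u=f-hu$ and using $\|hu\|_{L^\tau}\leq L\|u\|_{L^\tau}$, set $q_0:=q$ and
\begin{equation*}
q_{j+1}:=\min\Bigl\{p,\;\tfrac{nq_j}{n-2kq_j}\Bigr\}\;\text{ if }\;2kq_j<n,\qquad q_{j+1}:=p\;\text{ otherwise.}
\end{equation*}
The sequence $(q_j)$ strictly increases until it first reaches $p$, say at step $J$, with $J$ bounded in terms of $n,k,p,q$. Choose nested radii $r=r_{J+1}<r_J<\cdots<r_0=\delta$. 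At step $j$, applying the ADN estimate above on $B_{r_j}(x_0)\supset B_{r_{j+1}}(x_0)$ with the exponent $q_{j+1}$ yields
\begin{equation*}
\|u\|_{H_{2k}^{q_{j+1}}(\Omega\cap B_{r_{j+1}})}\leq C_j\bigl(\|f\|_{L^p(\Omega\cap B_\delta)}+\|u\|_{L^{q_{j+1}}(\Omega\cap B_{r_j})}\bigr),
\end{equation*}
and $\|u\|_{L^{q_{j+1}}(\Omega\cap B_{r_j})}$ is controlled either by the hypothesis $u\in L^q$ (when $j=0$) or via the Sobolev embedding $H_{2k}^{q_j}\hookrightarrow L^{q_j^*}$ applied to the output of the previous step. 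Chaining the $J+1$ inequalities produces the desired bound on $\|u\|_{H_{2k}^p(\Omega\cap B_r(x_0))}$ in terms of $\|f\|_{L^p(\Omega\cap B_\delta)}$ and $\|u\|_{L^q(\Omega\cap B_\delta)}$. The preservation of the Dirichlet condition $\eta u\in H_{k,0}^{q_j}$ at each step follows because the boundary trace is continuous in the relevant Sobolev norms and the initial hypothesis $\eta u\in H_{k,0}^s$ survives the upgrade of integrability on each smaller ball.

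The one substantive obstacle is the ADN estimate at a boundary point $x_0\in\partial\Omega$, which relies on the verification that the Dirichlet operators $(\mathrm{Id},\partial_\nu,\ldots,\partial_\nu^{k-1})$ satisfy the Lopatinskii--Shapiro complementing condition relative to $\Delta^k$; this is classical and is the content of the polyharmonic specialization of the main theorems of \cite{ADN}. All remaining ingredients --- flattening the boundary, the $\varepsilon$-absorption in the commutator step, and the finite-step bootstrap itself --- are routine, and since the leading operator is the fixed constant-coefficient $\Delta^k$, no freezing-of-coefficients argument is required beyond the localization.
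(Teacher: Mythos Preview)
The paper does not prove this theorem: Section~\ref{APP:GREEN:sec:regul:adn} simply states it (together with Theorems~\ref{APP:GREEN:th:2:holder} and~\ref{APP:GREEN:th:3}) as a ``reformulation of Agmon--Douglis--Nirenberg \cite{ADN}'' with no argument. Your proposal is therefore not competing with a proof in the paper but supplying one, and the outline you give --- a localized ADN estimate for $\Delta^k$ with Dirichlet data (the Lopatinskii--Shapiro condition being classical for this pair), followed by a finite Sobolev bootstrap on nested balls --- is the standard and correct route.

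One small point worth tightening: your ADN building block is phrased as an \emph{a priori} estimate, i.e.\ it presupposes $v\in H_{2k}^{q}$. When you invoke it at step $j$ with exponent $q_{j+1}$, the previous step has only produced $u\in H_{2k}^{q_j}$ on $B_{r_j}$, hence $u\in L^{q_{j+1}}$ by Sobolev, but not yet $u\in H_{2k}^{q_{j+1}}$. To close the loop you need the \emph{regularity} form of the ADN result (if $\Delta^k u\in L^{q_{j+1}}$ locally with vanishing Dirichlet traces, then $u\in H_{2k}^{q_{j+1}}$ locally), which is also contained in \cite{ADN} and carries the same constant. With that adjustment --- or, equivalently, an approximation-by-smooth-functions argument --- your bootstrap is complete and yields exactly the statement the paper quotes.
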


\begin{theorem}\label{APP:GREEN:th:2:holder} We fix $k\in\nn$ and $L>0$ and $\delta>0$. Let $\O$ be a smooth domain of $\rn$, $n>2k\geq 2$ and $x_0\in\Obar=\O\cup\partial\Omega$. Let $P=\Delta^k+h$ be a differential operator such that $h\in C^{0,\alpha}(\O\cap B_\delta(x_0))$ and $\Vert h\Vert_{C^{0,\alpha}}\leq L$ for some $\alpha\in (0,1)$. Let $u\in C^{2k,\alpha}(\Omega\cap B_\delta(x_0))$ be such that $\partial_\nu^iu=0$ on $B_\delta(x_0)\cap \partial\O$ for all $i=0,...,k-1$ and $f\in C^{0,\alpha}(\O\cap B_\delta(x_0))$ be such that $Pu=f$. Then for all $r<\delta$, we have that
$$\Vert u\Vert_{C^{2k,\alpha}(\O\cap B_r(x_0))}\leq C(n,\O,k,L,\alpha,\delta,r)\left(\Vert f\Vert_{C^{0,\alpha}(\O\cap B_\delta(x_0))}+\Vert u\Vert_{C^0(\O\cap B_\delta(x_0))}\right)$$
where $C(n,\O,k,L,\alpha,\delta,r)$ depends only on $n$, $\O$, $k$, $L$, $\alpha$, $\delta$ and $r$.
\end{theorem}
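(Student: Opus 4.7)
The strategy is to deduce Theorem~\ref{APP:GREEN:th:2:holder} from the classical global Schauder estimate of Agmon-Douglis-Nirenberg. Since $\Delta^k$ is properly elliptic of order $2k$ and the Dirichlet operators $\partial_\nu^i$ for $i=0,\ldots,k-1$ satisfy the complementing Lopatinski-Shapiro condition along any smooth boundary, the main results of \cite{ADN} provide, on any smooth bounded domain $\tilde\O$, a global a priori bound
$$\Vert w\Vert_{C^{2k,\alpha}(\overline{\tilde\O})}\leq C\left(\Vert Pw\Vert_{C^{0,\alpha}(\overline{\tilde\O})}+\Vert w\Vert_{C^0(\overline{\tilde\O})}\right)$$
for all $w\in C^{2k,\alpha}(\overline{\tilde\O})$ satisfying $\partial_\nu^i w=0$ on $\partial\tilde\O$ for $i=0,\ldots,k-1$, with $C$ depending only on $n$, $k$, $\alpha$, the geometry of $\tilde\O$, and $\Vert h\Vert_{C^{0,\alpha}}\leq L$.

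\medskip\noindent The interior case $x_0\in\O$ and the boundary case $x_0\in\partial\O$ are handled similarly; I focus on the latter. Fix radii $r=r_0<r_1<\cdots<r_N=\delta$ and choose cutoffs $\eta_i\in C^\infty_c(B_{r_{i+1}}(x_0))$ with $\eta_i\equiv 1$ on $B_{r_i}(x_0)$. Set $v_i:=\eta_i u$. Since $u$ satisfies $\partial_\nu^j u=0$ on $\partial\O\cap B_\delta(x_0)$ for $j=0,\ldots,k-1$, the same holds for $v_i$ on $\partial\O$ (and trivially on the rest of a chosen smooth bounded enlargement). A Leibniz expansion yields $Pv_i=\eta_i f+[\Delta^k,\eta_i]u$, where the commutator $[\Delta^k,\eta_i]$ is a linear differential operator of order at most $2k-1$ whose coefficients are bounded in $C^{0,\alpha}$ in terms of derivatives of $\eta_i$. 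Applying the global ADN estimate to $v_i$ on a smooth bounded domain coinciding with $\O$ on $\overline{B_{r_{i+1}}(x_0)}$, and using that $v_i$ is supported in $B_{r_{i+1}}(x_0)$, produces
$$\Vert u\Vert_{C^{2k,\alpha}(\O\cap B_{r_i}(x_0))}\leq C_i\left(\Vert f\Vert_{C^{0,\alpha}(\O\cap B_\delta(x_0))}+\Vert u\Vert_{C^{2k-1,\alpha}(\O\cap B_{r_{i+1}}(x_0))}+\Vert u\Vert_{C^0(\O\cap B_\delta(x_0))}\right).$$

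\medskip\noindent The main obstacle is the intermediate $C^{2k-1,\alpha}$-norm appearing on the right-hand side, which is not immediately absorbable as it involves derivatives of order $2k-1$. The classical resolution is to combine the chain of inequalities above with the Gagliardo-Nirenberg interpolation
$$\Vert u\Vert_{C^{2k-1,\alpha}(\O\cap B_{r_{i+1}}(x_0))}\leq \eps\,\Vert u\Vert_{C^{2k,\alpha}(\O\cap B_{r_{i+1}}(x_0))}+C(\eps)\,\Vert u\Vert_{C^0(\O\cap B_\delta(x_0))},$$
valid for every $\eps>0$. Choosing $\eps$ small enough at each step and iterating along the nested family $B_{r_0}\subset\cdots\subset B_{r_N}$, the top-order term is successively absorbed into the left-hand side, yielding the claimed estimate with a constant $C=C(n,\O,k,L,\alpha,\delta,r)$. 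Interior points are handled by the same cutoff scheme applied directly to $u$, with no boundary contribution, reducing to the interior version of the ADN estimate on a ball.
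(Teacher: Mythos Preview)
The paper does not prove this theorem: Section~\ref{APP:GREEN:sec:regul:adn} simply states it (together with Theorems~\ref{APP:GREEN:th:2:again} and~\ref{APP:GREEN:th:3}) as a reformulation of the results of Agmon--Douglis--Nirenberg~\cite{ADN}, with no argument given. Your proposal therefore supplies strictly more than the paper does, and the strategy---localize by cutoffs, apply the global ADN Schauder estimate on a suitable smooth domain, and control the commutator $[\Delta^k,\eta_i]u$ by interpolation---is the standard and correct one.

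One technical point deserves tightening. Your finite chain $r=r_0<\cdots<r_N=\delta$ together with the interpolation step yields, after choosing $\eps$ so that $C_i\eps\leq 1/2$,
\[
\Vert u\Vert_{C^{2k,\alpha}(\O\cap B_{r_0})}\;\leq\; 2^{-N}\Vert u\Vert_{C^{2k,\alpha}(\O\cap B_{r_N})}\;+\;C\big(\Vert f\Vert_{C^{0,\alpha}}+\Vert u\Vert_{C^0}\big),
\]
and the first term on the right, while finite by hypothesis, is not bounded by $\Vert f\Vert_{C^{0,\alpha}}+\Vert u\Vert_{C^0}$; so the absorption does not close with a fixed finite $N$. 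The usual remedy is to run the same estimate for \emph{all} pairs $r\leq\rho<\sigma<\delta$ (the constants then scale like a negative power of $\sigma-\rho$) and invoke the standard absorption lemma: if $\phi$ is bounded on $[r,\delta')$ and $\phi(\rho)\leq\theta\,\phi(\sigma)+A(\sigma-\rho)^{-m}+B$ for some $\theta<1$, then $\phi(\rho)\leq C(\theta,m)\big(A(\sigma-\rho)^{-m}+B\big)$. With this adjustment your argument is complete.
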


\begin{theorem}\label{APP:GREEN:th:3} We fix $k\in\nn$ and  $L>0$. Let $\O$ be a smooth domain of $\rn$, $n>2k\geq 2$. Let $P$ be a differential operator such that \eqref{APP:GREEN:def:okl} holds and fix $p\in (1,+\infty)$. Then for all $f\in L^p(\O)$, there exists $u\in H_{2k}^p(\O)\cap H_{k,0}^p(\O)$ unique such that $Pu=f$. Moreover, for some $C(\O,k,L,p)$ depends only on $\O$, $k$, $L$ and $p$, we have that
$$\Vert u\Vert_{H_{2k}^p(\O)}\leq C(\Omega,k,L,p ) \Vert f\Vert_{L^p(\O)}.$$
\end{theorem}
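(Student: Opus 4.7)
The plan is to split into uniqueness and existence, handling existence first in the Hilbertian case $p=2$ (and $p\geq 2$ by regularity upgrade) and then in $1<p<2$ via a Fredholm argument. Throughout, the two workhorses are the coercivity \eqref{APP:GREEN:def:okl} and the ADN elliptic estimate of Theorem \ref{APP:GREEN:th:2:again}.

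I would start with uniqueness: if $u\in H_{2k}^p\cap H_{k,0}^p$ satisfies $Pu=0$, bootstrap integrability by iterating Sobolev embedding $H_{2k}^{q}\hookrightarrow L^{q_1}$ (with $q_1>q$ strictly, or $q_1=\infty$) together with Theorem \ref{APP:GREEN:th:2:again} applied to $\Delta^k u=-hu$, using a finite cover of $\overline{\O}$. The vanishing traces of order $0,\ldots,k-1$ are preserved along the iteration since they are intrinsic to $u$, so one stays in $H_{k,0}^{q}$ at each step. In finitely many steps this puts $u\in H_{k,0}^2$, and then coercivity gives $L^{-1}\Vert u\Vert_{H_k^2}^2\leq\int_\O uPu\,dx=0$, hence $u\equiv 0$. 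For existence at $p=2$, coercivity makes the bilinear form $(u,v)\mapsto\int_\O uPv\,dx$ continuous and coercive on $H_{k,0}^2$, so Lax-Milgram yields $u\in H_{k,0}^2$ with $\Vert u\Vert_{H_k^2}\leq C(L)\Vert f\Vert_{L^2}$; writing $\Delta^k u=f-hu\in L^2$ and applying Theorem \ref{APP:GREEN:th:2:again} gives $u\in H_{2k}^2$ with the desired bound. The case $p>2$ follows from $L^p(\O)\subset L^2(\O)$ and a second application of Theorem \ref{APP:GREEN:th:2:again}, the lower-order $L^q$-term being absorbed via the $p=2$ estimate.

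The main obstacle is the case $1<p<2$, which I would handle by a Fredholm argument. Factor $P=\Delta^k\circ(I+T_h)$ with $T_h:=(\Delta^k_D)^{-1}\circ M_h$, where $M_h$ is multiplication by $h$ and $(\Delta^k_D)^{-1}:L^p(\O)\to H_{2k}^p(\O)\cap H_{k,0}^p(\O)$ is the Dirichlet inverse of the pure polyharmonic operator. This inverse is bounded for every $p\in(1,\infty)$: the $p\geq 2$ case is the $h\equiv 0$ instance of the previous paragraph (coercivity for $h\equiv 0$ is Sobolev on $\hundeux$), and the $p<2$ case follows by duality using the self-adjointness of $\Delta^k$ on $L^2$, with the $H_{2k}^p$-bound then recovered from Theorem \ref{APP:GREEN:th:2:again}. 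Since $M_h:H_{2k}^p\to L^p$ is continuous and $(\Delta^k_D)^{-1}:L^p\to H_{2k}^p$ is bounded, Rellich-Kondrachov makes $T_h$ compact on $H_{2k}^p\cap H_{k,0}^p$. Hence $I+T_h$ is Fredholm of index zero; its kernel is trivial because any $v\in\ker(I+T_h)$ satisfies $Pv=0$ in $H_{2k}^p\cap H_{k,0}^p$ and is therefore zero by the uniqueness step. Thus $I+T_h$ is a Banach-space isomorphism, and $u:=(I+T_h)^{-1}(\Delta^k_D)^{-1}f$ provides the required solution with the claimed norm estimate, the operator norms being controlled by the ADN constants, $L$, $p$, and $\O$ only.
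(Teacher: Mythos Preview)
The paper does not prove this theorem: Section \ref{APP:GREEN:sec:regul:adn} merely states Theorems \ref{APP:GREEN:th:2:again}--\ref{APP:GREEN:th:3} as reformulations of Agmon--Douglis--Nirenberg \cite{ADN} without argument, so there is no proof to compare against. Your outline is a standard and essentially correct reconstruction.

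Two points deserve attention. First, a minor one: after Lax--Milgram you have only $u\in H_{k,0}^2(\O)$, whereas Theorem \ref{APP:GREEN:th:2:again} as formulated in the paper requires the a priori membership $u\in H_{2k}^s$ for some $s>1$; the passage from $H_{k,0}^2\cap\{\Delta^k u\in L^2\}$ to $H_{2k}^2$ is itself part of the ADN machinery (difference quotients), not a consequence of the stated local estimate. Second, and more substantive: your Fredholm step for $1<p<2$ gives invertibility of $I+T_h$ for each fixed $h$, but the open mapping theorem alone does not bound $\Vert(I+T_h)^{-1}\Vert$ uniformly over all $h$ satisfying \eqref{APP:GREEN:def:okl}, which is exactly what the dependence $C(\O,k,L,p)$ demands. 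The cleanest fix is to run your duality argument for $P$ itself (it is symmetric) rather than only for $\Delta^k$: approximating $f\in L^p$ by $f_j\in L^2$, the already-established $p'>2$ estimate, whose constant \emph{is} uniform in $h$, gives $\Vert u_j\Vert_{L^p}\leq C(\O,k,L,p)\Vert f_j\Vert_{L^p}$ via pairing against $P^{-1}g$ for $g\in L^{p'}$, after which Theorem \ref{APP:GREEN:th:2:again} (applied to $u_j\in H_{2k}^2\subset H_{2k}^p$) upgrades to a uniform $H_{2k}^p$ bound and one passes to the limit.
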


\end{document}